\newtheorem{assumption}{Assumption}
\newtheorem{example}{Example}
\crefname{hypothesis}{Hypothesis}{Hypotheses}
\title{A regularized Newton method for $\ell_q$-norm composite optimization problems \thanks{Submitted to the editors DATE.
		\funding{The second author's research was supported by the National Natural Science Foundation of China under project No.11971177. The third author's research was partially supported by Research Grants Council of Hong Kong (PolyU 15217520).}}}
\author{Yuqia Wu\thanks{Department of Applied Mathematics, The Hong Kong Polytechnic University, Hong Kong  
		(\email{yuqia.wu@connect.polyu.hk}).}
	\and Shaohua Pan\thanks{School of Mathematics,  South China University of Technology, Guangzhou, China 
		(\email{shhpan@scut.edu.cn}).}
	\and Xiaoqi Yang\thanks{Department of Applied Mathematics, The Hong Kong Polytechnic University, Hong Kong  
		(\email{mayangxq@polyu.edu.hk}).}}
\begin{document}
	\maketitle
	
	% REQUIRED
	\begin{abstract}
		This paper is concerned with $\ell_q\,(0<q<1)$-norm regularized minimization problems with a twice continuously differentiable loss function. For this class of nonconvex and nonsmooth composite problems, many algorithms have been proposed to solve them and most of which are of the first-order type. In this work, we propose a hybrid of proximal gradient method and subspace regularized Newton method, named HpgSRN. The whole iterate sequence produced by HpgSRN is proved to have a finite length and converge to an $L$-type stationary point under a mild curve-ratio condition and the Kurdyka-{\L}ojasiewicz property of the cost function, which does linearly if further a Kurdyka-{\L}ojasiewicz property of exponent $1/2$ holds. Moreover, a superlinear convergence rate for the iterate sequence is also achieved under an additional local error bound condition. Our convergence results do not require the isolatedness and strict local minimality properties of the $L$-stationary point. Numerical comparisons with ZeroFPR, a hybrid of proximal gradient method and quasi-Newton method for the forward-backward envelope of the cost function, proposed in [A. Themelis, L. Stella, and P. Patrinos, {\em SIAM J. Optim., } 28(2018), pp. 2274-2303] for the $\ell_q$-norm regularized linear and logistic regressions on real data indicate that HpgSRN not only requires much less computing time but also yields comparable even better sparsities and objective function values.
	\end{abstract}
	
	% REQUIRED
	\begin{keywords}
		$\ell_q$-norm regularized composite optimization, regularized Newton method, global convergence,
		superlinear convergence rate, KL property, local error bound
	\end{keywords}
	
	% REQUIRED
	\begin{AMS}
		90C26 65K05 90C06 49J52
	\end{AMS}

	%--------------------------------------------------------------------------------------
	\section{Introduction}\label{sec1}
	
	Let $A\in \mathbb{R}^{m\times n}$ be a data matrix, and let $f\!:\mathbb{R}^m\rightarrow \mathbb{R}$ be a twice continuously differentiable function with $c_{f}\!:=\inf_{z\in\mathbb{R}^m}f(z)>\!-\infty$.
	We consider the following $\ell_q\,(0\!<\!q\!<\!1)$-norm regularized composite optimization problem
	\begin{equation}\label{model}
		\min_{x\in\mathbb{R}^n} F(x):=f(Ax)+\lambda \|x\|_q^q,
	\end{equation}
	where $\lambda>0$ is the regularization parameter and $\|x\|_q\!:=\left(\sum_{i=1}^n |x_i|^q\right)^{1/q}$
	denotes the $\ell_q$ quasi-norm of $x$.
	When $f(\cdot)=\|\cdot - b\|^2$ for a vector $b\in \mathbb{R}^m$,
	problem \cref{model} reduces to the familiar $\ell_q$-norm regularized least squares problem.
	
	Problem \eqref{model} first appears in statistics as the bridge penalty regression \cite{frank93},
	and later appears in optimization as a special case of nonsmooth and nonconvex penalty problems
	studied by Luo et al. \cite{luo96} and Yang et al. \cite{huang03,yang01} for
	nonlinear optimization problems. In signal processing, Chartrand \cite{chartrand07} early
	showed that the $\ell_q\,(0\!<\!q\!<\!1)$-norm can substantially reduce the number
	of measurements required by $\ell_1$-norm for exact recovery of signals, which motivates
	the frequent use of the $\ell_q\,(0\!<\!q\!<\!1)$-norm in compressed sensing.
	Because for any given $x\in\mathbb{R}^n$, $\|x\|_q^q\to \|x\|_0$ as $q\downarrow0$,
	where $\|x\|_0$ denotes the zero-norm (cardinality) of $x$, problem \eqref{model} is often used as a nonconvex surrogate
	of the zero-norm regularized problem, and is found to
	have a wide spectrum of applications in signal and image processing, statistics,
	and machine learning (see, e.g., \cite{saab08,nikolova08,figueiredo07,xu10,Xu23}).
	Inspired by the wide applications of \eqref{model}, this work aims to propose
	a globally convergent subspace regularized Newton method, i.e., a hybrid of proximal gradient (PG) method and subspace regularized Newton method, for solving it.
	%-------------------------------------------------------------------------------------------
	\subsection{Related works}\label{sec1.1}
	
	Due to the nonconvexity and non-Lipschitz continuity of the $\ell_q$-norm,
	problem \eqref{model} is a class of difficult nonconvex and nonsmooth optimization
	problems. In the past decade, many first-order methods have been developed for
	seeking its critical points. For some special $q$, say $q=1/2$ or $2/3$,
	since the proximal mapping of the $\ell_q$-norm has a closed-form solution (see Xu et al. \cite{Xu12,Xu23}), the PG method becomes a class of popular ones for solving \eqref{model} with such $q$. For a general $q\in(0,1)$, Hu et al. also proposed an exact PG method in \cite{Hu17} and an inexact PG method in \cite{Hu21} for \cref{model}. When assuming that the limit point is a local minimizer, a linear convergence rate was obtained in \cite{Hu17,Hu21,Xu12}. In addition, a class of PG methods with a nonmonotone line search strategy (called SpaRSA) was proposed by Wright et al. \cite{Wright09}. For problem \eqref{model} with a general $q\in(0,1)$, the reweighted $\ell_1$-minimization method \cite{Lu14, chen14, lai13} is another class of common first-order methods
	%   , which finds a critical point of \eqref{model} 
	by solving a sequence of weighted $\ell_1$-norm regularized minimization problems. To overcome the non-Lipschitz difficulty of the $\ell_q$-norm, Chen et al. \cite{Chen10} proposed a class of smoothing method by constructing a smooth approximation of the $\ell_q$-norm and using the steepest descent method to solve the constructed
	smooth approximation problem.
	
	As is well known, first-order methods have many advantages such as cheap computational cost in each iterate, weak global convergence conditions and easy implementation,
	but their convergence rate is at most linear. Second-order methods normally have a local superlinear convergence rate, but it is not an easy task to achieve the global convergence of their whole iterate sequence.  For unconstrained nonconvex and smooth optimization, the global convergence analysis of Newton-type methods armed with line search is limited to the subsequential convergence of the iterate sequence (see \cite{Wright02} and references therein),  let alone for problem \cref{model}. As will be discussed later, Themelis et al. \cite{Themelis18,themelis21} recently provided the global convergence analysis of Newton-type methods armed with line search for problem \cref{model} under the Kurdyka-{\L}ojasiewicz (KL) framework, but they assume that the second-order directions are controlled by the residual, 
	and now it is unclear what condition can ensure it to hold if Newton directions are adopted. In addition,
	for unconstrained smooth optimization problems, to the best of our knowledge, the current weakest condition for a second-order method to have a local superlinear convergence rate is a local error bound condition at local minima; see \cite{Ueda10,li04,fischer02}. 
	Therefore, it is natural to ask whether it is possible to design a globally convergent Newton-type method for problem \cref{model} with a local superlinear convergence rate under a similar local error bound condition.
	
	In recent years, many researchers are interested in using second-order methods to solve the following general nonconvex and nonsmooth composite problem
	\begin{equation}\label{composite}
		\min_{x\in\mathbb{R}^n} \Phi(x):= \phi(x)+ h(x),
	\end{equation}
	where $h\!:\mathbb{R}^n\to(-\infty,\infty]$ is a closed proper function and
	$\phi$ is twice continuously differentiable on an open subset containing
	the effective domain of $h$.
	For problem \eqref{composite} with both $\phi$ and $h$ convex,
	there are some active investigations in this direction.
	Lee et al. \cite{Lee14} proposed an inexact proximal Newton-type method
	and achieved the local quadratic
	convergence rate of the iterate sequence under the strong convexity of $\phi$. Yue et al. \cite{Yue19} proposed an inexact regularized proximal Newton method and
	established the local linear, superlinear and quadratic convergence rate
	of the iterate sequence (by the approximation degree to the Hessian matrix of $\phi$) under Luo-Tseng error bound. Mordukhovich et al. \cite{Mordu20} proposed a proximal Newton-type method and obtained the superlinear convergence rate of the iterate sequence under the metric $p\ (>1/2)$-subregularity of the subdifferential mapping $\partial\Phi$. We notice that the inexact proximal Newton-type method in \cite{Lee14} was  recently extended by Kanzow and Lechner \cite{Kanzow21} to solve problem \eqref{composite} with only a convex $h$, which essentially belongs to weakly convex optimization. Their global and local superlinear convergence results require the local strong convexity of $\Phi$ around any stationary point.
	
	By following a different line, the forward-backward envelope (FBE) of $\Phi$ has been
	extensively investigated for designing a hybrid of PG and second-order methods.
	For $h$ being convex with a cheap computable proximal mapping,
	Stella et al. \cite{stella17} combined  a PG method and a quasi-Newton method to minimize the FBE of $\Phi$ and proved the convergence of the whole sequence under the KL property of $\Phi$ and the superlinear convergence rate
	under the local strong convexity of the FBE of $\Phi$.
	For \eqref{composite} with $\phi$ being additionally convex and $h$ just having a cheap computable proximal mapping, Themelis et al. \cite{Themelis19} proposed a hybrid
	of PG and inexact Newton methods by using FBE of $\Phi$ (named FBTN) and proved that ${\rm dist}(x^k,\mathcal{X}^*)$ converges superlinearly to $0$ under an assumption without requiring the singleton of the solution set $\mathcal{X}^*$ of \eqref{composite}.
	In \cite{Themelis18} Themelis et al. used the FBE of $\Phi$ to develop a hybrid framework of PG and quasi-Newton methods (ZeroFPR), and achieved the global convergence of the iterate sequence by virtue of the KL property of the FBE, and its local superlinear rate
	under the Dennis-Mor\'{e} condition and the strong local minimum of the limit point.
	The convergence rate results in \cite{stella17,Themelis18} require the isolatedness of the limit point. Very recently, Ahookhosh et al. \cite{themelis21} utilized
	the Bregman FBE of $\Phi$ to develop a more general hybrid framework of PG and
	second-order methods, BELLA. They obtained the global convergence of the iterate sequence for the tame functions
	$\phi$ and $h$, and the local superlinear rate of the distance of the iterate sequence to the set of fixed points of the Bregman FBE by assuming that the second-order directions are the superlinear ones with order $1$ and KL property of exponent $\theta\in(0,1)$ of $\Phi$. Their work greatly improved the results of \cite{stella17,Themelis18} by removing
	the isolatedness restriction on local minima and established that the second-order directions are indeed the superlinear ones with order $1$ under the assumptions that the limit point is a strong local minimum (also implying the isolatedness) and a Dennis-Mor\'{e} condition holds. It is unclear what conditions are sufficient
	for second-order directions to be superlinear without the strong local minimum property.
	To achieve the global convergence, the search directions in \cite{stella17,Themelis18,themelis21} are required to be controlled by the residual (see \cref{dboundxkxbar}). 
	
	It is worth noting that when $\phi$ and $h$ in \eqref{composite} are convex,
	the local quadratic or superlinear convergence to a nonisolated optimal solution
	of second-order methods was obtained in \cite{Yue19,Mordu20}.
	To the best of our knowledge, the paper \cite{themelis21} is the first to achieve
	the superlinear convergence of the distance function of the iterate sequence
	without the isolateness assumption for solving problem \eqref{composite} with nonconvex and nonsmooth $h$.
	
	In addition, 
	for the case $h(x)=\lambda \|x\|_0$, Zhou et al. \cite{zhou21} developed a subspace Newton method by solving the stationary equations restricted in the subspace identified by the proximal mapping of $\lambda \|x\|_0$,
	%   Zhou et al. \cite{zhou21} proposed a subspace Newton method for the case $g(x)\equiv\|x\|_0$ by its stationary point conditions 
	and established
	the local quadratic convergence rate of the iterate sequence under
	the local strong convexity of $\phi$ around any stationary point.  Their subspace Newton method relies on the subspaces identified by a PG method. Recently, Bareilles et al. \cite{bareilles20} considered problem \eqref{composite} where $\phi$ is smooth and $h$ has a cheap computable proximal mapping, and proposed a hybrid  of PG and Newton methods under the framework of manifolds. Their algorithm alternates between a PG step and a Riemannian update on an identified manifold, and was proved to
	have a quadratic convergence rate under a positive definiteness assumption on the Riemannian Hessian of the objective function at limit points. 
	For the unified analysis on manifold identification of any PG methods, we refer the reader to the work \cite{sun19}.
	%-------------------------------------------------------------------------------------------
	\subsection{Main contributions}\label{sec1.2}
	
	In this paper, we propose a hybrid of PG and subspace regularized Newton methods (HpgSRN)
	for problem \cref{model}. Though problem \eqref{model} is a special case of \eqref{composite}, our HpgSRN is quite different from ZeroFPR \cite{Themelis18} and BELLA \cite{themelis21}; see the discussions in \cref{remark-hybrid} (d). For convenience, in the sequel, we write
	\[
	\psi(x):=f(Ax)\ \mbox{ and }\ g(x):=\|x\|_q^q
	\ \mbox{ for } x\in\mathbb{R}^n.
	\]
	To describe the working flow of HpgSRN, for any given $S\subseteq\{1,2,\ldots,n\}$,
	define
	\begin{equation}\label{FS-fun}
		F_{S}(u)\!:=\psi_{S}(u)+\lambda g_S(u)\ \ {\rm with}\ \
		\psi_S(u)\!:=\!f(A_{S}u),g_S(u)\!:=\!\sum_{i\in S}|u_i|^q
		\ \ {\rm for}\ u\in\mathbb{R}^{|S|}.
	\end{equation}
	By \cref{property-FS}, for $S={\rm supp}(x)$,
	such $F_{S}$ is twice continuously differentiable at $x_{S}$.
	
	As mentioned before, a PG method needs a weak condition for global convergence
	and a very cheap computation cost in each iterate, but has only a linear convergence rate. Hence, we use a PG method to seek a good estimate in some neighborhood of
	a potential critical point, and enhance the convergence speed by using
	a regularized Newton method in the subspace associated to the support of the iterate
	generated by the PG method. Specifically, with the current $x^k$,
	the PG step yields $\overline{x}^k$ by computing
	\[
	\overline{x}^{k} \in \mathop{\arg\min}_{x\in \mathbb{R}^n}\Big\{\psi(x^k)+\langle \nabla\psi(x^k), x-x^k\rangle
	+ \frac{\overline{\mu}_k}{2} \|x-x^k\|^2+\lambda g(x)\Big\},
	\]
	where the step-size $\overline{\mu}_k$  depends on the (local) Lipschitz constant
	of $\nabla \psi$ near $x^k$.
	Write $S_k\!:={\rm supp}(x^k)$ and $S_k^{c}\!:=\{1,\ldots,n\}\backslash S_k$. 
	If 
	one of the conditions in \cref{if-else} is not satisfied, let $x^{k+1}\!:=\!\overline{x}^k$
	and return to the PG step; otherwise switch to a regularized Newton step where
	the Newton direction $d^k$ has the form $(d_{S_k}^k;0)$ with
	\begin{equation}\label{rnt-subp}
		d_{S_k}^k:=\mathop{\arg\min}_{d_{\!S_{k}} \in\mathbb{R}^{|S_{k}|}}
		\Big\{F_{S_k}(u^k)+\langle \nabla F_{S_k}(u^k), d_{S_k}\rangle +
		\frac{1}{2}\left\langle(\nabla^2 F_{S_k}(u^k)+\xi_k I)d_{S_k},d_{S_k}\right\rangle\Big\}
	\end{equation}
	for $u^k\!=\!x_{S_k}^k$ and a constant $\xi_k$ such that
	$\nabla^2 F_{S_k}(u^k)+\xi_k I$ is positive definite.
	It is easy to verify that $d^k_{S_k}$ is the unique solution of the system of linear equations
	\[
	\left(\nabla^2 F_{S_k}(u^k)+\xi_k  I\right)d_{S_k} = -\nabla F_{S_k}(u^k).
	\]
	We perform the Armijo line search along the direction $d^k$ to seek
	an appropriate step-size $\alpha_k$, set $x^{k+1}:=x^k+\alpha_k d^k$, and then return to
	the PG step to guarantee that the iterate sequence has a global convergence property.
	
	From the above statement, the iterate sequence $\{x^k\}_{k\in\mathbb{N}}$ of HpgSRN consists of two parts: the iterates given by the PG step and those generated by
	the subspace regularized Newton step. Some switching conditions involving ${\rm sign}(x^k) = {\rm sign}(\overline{x}^k)$ decide which step the next iterate $x^{k+1}$ enters in.
	
	The main contributions of this paper include three aspects:
	\begin{itemize}
		\item [{\bf(i)}] We propose a hybrid of the PG and subspace regularized Newton methods for solving problem (\ref{model}).
		Different from ZeroFPR and BELLA, each iterate of HpgSRN does not necessarily
		perform a second-order step until sufficiently many steps are performed and the computation of the regularized Newton step
		fully exploits the subspace structure, which substantially reduces the computation cost.
		Numerical comparison with ZeroFPR indicates that HpgSRN not only requires
		much less computing time (especially for those problem with $n\gg m$)
		but also yields comparable even better sparsity and objective function values.
		
		\item[{\bf(ii)}] For the proposed HpgSRN, we achieve the global convergence of the iterate
		sequence under the  local Lipschitz continuity of $\nabla^2\!f$ on $\mathbb{R}^m$ (see \cref{ass1}), the KL property of $F$, and a curve-ratio condition for the subspace regularized Newton directions (see  \cref{ass2}). Both Assumptions \ref{ass1} and \ref{ass2} are commonly used in the convergence analysis of Newton-type methods with line search.
		%   {\color{red}  while for the direction $d^k$ prescribed by \cref{hybrid}, \cref{ass2} is weaker than condition \cref{dboundxkxbar}, the one used in \cite{Themelis18,themelis21} to prove the global convergence, as will be discussed in  \cref{lemma-cond} and \cref{ex1}. 
			
			%   while \cref{ass2} is shown to be weaker than the one used by ZeroFPR
			%   and BELLA for global convergence {\color{blue} if the second-order direction is provided by \cref{hybrid}}; see \cref{lemma-cond} and \cref{ex1}.
			
			\item[\bf(iii)] Under Assumptions \ref{ass1} and \ref{ass2}, if the KL property of $F$ is
			strengthened to be the KL property of exponent $1/2$, we establish the $R$-linear convergence rate of the iterate sequence. If in addition a local error bound condition holds at the limit point,
			the iterate sequence is shown to converge superlinearly with rate $1\!+\!\sigma$ for $\sigma\in(0,1/2]$. This not only removes the local optimality of the limit point
			as required by ZeroFPR and BELLA, but also gets rid of its isolatedness as BELLA does.
		\end{itemize}
		
		The rest of this paper is organized as follows. Section \ref{sec2} gives
		some preliminaries, including the subdifferential characterization of $F$
		and the equivalence between the KL property of exponent $1/2$ of $F$ and that of $F_{S}$.
		Section \ref{sec3} presents the formal iterate steps of HpgSRN and some auxiliary results. Section \ref{sec4} provides the global and local convergence analysis of HpgSRN. Finally, in section \ref{sec5} we conduct numerical experiments for HpgSRN on $\ell_q$-norm regularized linear and logistic regressions on real data and compare its performance with
		ZeroFPR and the PG method with a monotone line search (PGls). 
		
		%-------------------------------------------------------------------------------------
		\subsection{Notation}\label{sec1.3}
		
		Throughout this paper, $\mathbb{R}^n$ denotes the $n$-dimensional Euclidean space,
		equipped with the standard inner product $\langle\cdot,\cdot\rangle$ and its induced
		norm $\|\cdot\|$, and ${\bf B}$ denotes the unit ball. $I$ is an identity matrix whose dimension is known from the context.
		For an integer $k\ge 1$, write $[k]\!:=\{1,\ldots,k\}$;
		and for integers $k_2>k_1\ge 1$, write $[k_1,k_2]\!:=\{k_1,\ldots,k_2\}$.
		For a symmetric matrix $H$, $\lambda_{\min}(H)$ and $\|H\|$
		denote the smallest eigenvalue and the spectral norm of $H$ respectively,
		and $H\succ 0$ means that $H$ is positive definite. For a closed set
		$C\subseteq\mathbb{R}^n$, $\Pi_{C}$ denotes the projection operator onto
		$C$, ${\rm dist}(x,C)$ denotes the Euclidean distance from a point
		$x\in\mathbb{R}^n$ to $C$.
		% and {\color{blue} ${\rm conv}(C)$ denotes the convex hull of $C$}.
		The notation $\circ$ means the Hadamard product operation
		of vectors. For $x\in \mathbb{R}^n$, ${\rm supp}(x):=\{i\in[n]\,|\, x_i\ne 0\}$ denotes
		its support, ${\rm sign}(x)$ denotes the vector with $[{\rm sign}(x)]_i ={\rm sign}(x_i)$, and $|x|_{\min}\!:=\!\min_{i\in {\rm supp}(x)}|x_i|$ denotes the smallest absolute value of the nonzero entries of $x$. For an index set $S\subseteq [n]$, write $S^{c}\!:=[n]\backslash S$ and denote by $x_{S}\in\mathbb{R}^{|S|}$ the vector
		consisting of those $x_j$ with $j\in S$, and for a matrix $A\in\mathbb{R}^{m\times n}$,
		$A_{S}\in\mathbb{R}^{m\times|S|}$ means the matrix consisting of those columns $A_j$ with $j\in S$. For $a\in\mathbb{R}$, $a_{+}:=\max\{a,0\}$.
		%---------------------------------------------------------------------------------
		\section{Preliminaries}\label{sec2}
		
		In this section, we recall some necessary concepts and present some preliminary results.  First, we recall the outer semicontinuity (see \cite[Definition 5.4]{RW09}) and the upper semicontinuity (see \cite[p. 266]{BS00}) of a set-valued mapping.
		\begin{definition}\label{outer-continuity}
			A set-valued mapping $\mathcal{F}:\mathbb{R}^n \rightrightarrows \mathbb{R}^m$ is outer semicontinuous at $\overline{x}$ if $\limsup_{x\rightarrow\overline{x}} \mathcal{F}(x) \subseteq \mathcal{F}(\overline{x})$, and is upper semicontinuous at $\overline{x}$ if for any neighborhood $V$ of $\mathcal{F}(\overline{x})$, there exists a neighborhood $U$ of $\overline{x}$ such that for every $x\in U$, $\mathcal{F}(x) \subseteq U$.
		\end{definition}
		
		%  By Definition \ref{outer-continuity}, for a closed-valued mapping
		%  $\mathcal{F}:\mathbb{R}^n \rightrightarrows \mathbb{R}^m$, its upper semicontinuity at $\overline{x}$ implies its outer semicontinuity at this point. Indeed, it suffices to consider that $\limsup_{x\rightarrow\overline{x}} \mathcal{F}(x) \neq \emptyset$.  
		%  Pick any $\overline{y} \in \limsup_{x\rightarrow\overline{x}} \mathcal{F}(x)$. Then, there exists a sequence $\{x^k\}\rightarrow \overline{x}$ and a sequence $\{y^k\}\rightarrow \overline{y}$ with $y^k \in \mathcal{F}(x^k)$ for each $k\in\mathbb{N}$. Suppose on the contrary that $\overline{y} \notin \mathcal{F}(\overline{x})$. From the closedness of $\mathcal{F}(\overline{x})$, there exists $\varepsilon >0$ such that $\overline{y} \notin \bigcup_{y\in\mathcal{F}(\overline{x})} \mathbb{B}^{\circ}(y,\varepsilon).$ Notice that  $\bigcup_{y\in\mathcal{F}(\overline{x})} \mathbb{B}^{\circ}(y,\varepsilon)$ is an open set containing $\mathcal{F}(\overline{x})$. From the upper semicontinuous of $\mathcal{F}$ at $\overline{x}$, $y^k \in \mathcal{F}(x^k) \subseteq \bigcup_{y\in\mathcal{F}(\overline{x})} \mathbb{B}^{\circ}(y,\varepsilon),$ which along with $y^k \rightarrow \overline{y}$ implies that $\overline{y} \in \bigcup_{y\in\mathcal{F}(\overline{x})} \mathbb{B}(y,\varepsilon)$, a contradiction to $\overline{y} \notin \bigcup_{y\in\mathcal{F}(\overline{x})} \mathbb{B}^{\circ}(y,\varepsilon).$} 
	
	For a proper lower semicontinuous (lsc) function $h\!:\mathbb{R}^n\to(-\infty,\infty]$,
	its proximal mapping associated to parameter $\mu>0$ is defined by
	\[
	{\rm prox}_{\mu h}(x):=\mathop{\arg\min}_{z\in\mathbb{R}^n}\Big\{\frac{1}{2\mu}\|z-x\|^2 + h(z)\Big\}
	\quad{\rm for}\ x\in\mathbb{R}^n.
	\]
	For the proximal mapping of $g$, from \cite[Theorem 2.1]{Chen10} we have the following result.
	%-------------------------------------------------------------------------------------------
	\begin{lemma}\label{prox-bound}
		Fix any $\mu>0$ and $y\in\mathbb{R}^n$, if
		$\overline{y}\in {\rm prox}_{\mu g}(y)$,
		then it holds that $|\overline{y}|_{\rm min}\ge\big[\mu q(1\!-\!q)\big]^{\frac{1}{2-q}}$.
	\end{lemma}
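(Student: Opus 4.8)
The plan is to reduce the statement to a one-dimensional analysis via separability, and then extract the bound from the second-order necessary optimality condition. Since both $z\mapsto\frac{1}{2\mu}\|z-y\|^2$ and $g(z)=\sum_{i=1}^n|z_i|^q$ split across the coordinates of $z$, a vector $\overline{y}$ belongs to ${\rm prox}_{\mu g}(y)$ if and only if, for each $i\in[n]$, the scalar $\overline{y}_i$ is a global minimizer of
\[
\theta_i(t):=\frac{1}{2\mu}(t-y_i)^2+|t|^q,\qquad t\in\mathbb{R}.
\]
Hence it suffices to prove that any index $i$ with $\overline{y}_i\ne0$ satisfies $|\overline{y}_i|\ge[\mu q(1-q)]^{1/(2-q)}$; taking the minimum over $i\in{\rm supp}(\overline{y})$ then yields the asserted lower bound on $|\overline{y}|_{\rm min}$.

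Next I would fix such an $i$. Because $\theta_i(t)$ and $\theta_i(-t)$ differ only by the term $-2ty_i/\mu$, the sign of any minimizer of $\theta_i$ must coincide with that of $y_i$; replacing $y_i$ by $-y_i$ if necessary, we may assume $\overline{y}_i>0$. On the open half-line $(0,\infty)$ the function $\theta_i$ is twice continuously differentiable, with
\[
\theta_i'(t)=\frac{1}{\mu}(t-y_i)+qt^{q-1},\qquad\theta_i''(t)=\frac{1}{\mu}-q(1-q)t^{q-2}.
\]
Since $\overline{y}_i$ is a global minimizer of $\theta_i$ lying in the interior of $(0,\infty)$, the second-order necessary optimality condition forces $\theta_i''(\overline{y}_i)\ge0$, that is, $\frac{1}{\mu}\ge q(1-q)\,\overline{y}_i^{\,q-2}$. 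Multiplying both sides by $\mu\,\overline{y}_i^{\,2-q}>0$ gives $\overline{y}_i^{\,2-q}\ge\mu q(1-q)$, and raising to the power $1/(2-q)>0$ produces the desired estimate.

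I do not expect a serious obstacle, as the computation is short; the one point worth emphasizing is that the \emph{second-order} information is genuinely needed. A purely first-order (stationarity) argument would not suffice: the equation $\theta_i'(t)=0$ may possess a second root in the interval $\big(0,[\mu q(1-q)]^{1/(2-q)}\big)$, but at such a root $\theta_i''<0$, so it is a local maximum of $\theta_i$ and is excluded as a minimizer candidate precisely by the second-order necessary condition. It is also worth recording that $\theta_i$ is indeed $C^2$ on $(0,\infty)$ because $t\mapsto t^q$ is smooth away from the origin, and that a positive global minimizer automatically lies in the interior of $(0,\infty)$, so the necessary condition applies with no further argument.
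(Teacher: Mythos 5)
Your proof is correct. Note, though, that the paper does not prove this lemma at all: it simply imports it as a citation of \cite[Theorem 2.1]{Chen10}, where the lower bound on nonzero entries is established for $\ell_q$-regularized least squares. Your argument — reduce by separability to the scalar problem $\min_t \frac{1}{2\mu}(t-y_i)^2+|t|^q$, observe that a nonzero global minimizer is an interior local minimizer of a $C^2$ function on $(0,\infty)$ (after the harmless sign reduction), and read the bound $\overline{y}_i^{\,2-q}\ge \mu q(1-q)$ off the second-order necessary condition $\theta_i''(\overline{y}_i)\ge 0$ — is precisely the standard mechanism behind that cited result, so it is not a different route in substance, but it has the merit of being short and self-contained rather than an appeal to an external theorem. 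All the steps check out: the separable "only if" direction is valid (otherwise improving one coordinate would decrease the whole objective), the sign argument via $\theta_i(t)-\theta_i(-t)=-2ty_i/\mu$ is sound (and in fact only the reduction to $\overline{y}_i>0$ is needed, since $\theta_i''$ does not involve $y_i$), and your closing remark that first-order stationarity alone would not suffice is exactly right, since $\theta_i'$ can vanish at a local maximum below the threshold.
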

	%---------------------------------------------------------------------------------
	\subsection{Stationary point of problem \eqref{model}}\label{sec2.2}
	%------------------------------------------------------------------------------------------
	Before introducing a stationary point of \eqref{model}, we characterize the  subdifferentials of $g$.  By the expression of $g$ and \cite[Definition 8.3]{RW09}, it is easy to verify that the following result holds,  where the notions of (subdifferential) regularity and horizon cone can be found in \cite[Definition 7.25 \& 3.3]{RW09}.
	%-----------------------------------------------------------------------------------
	\begin{lemma}\label{subdiff-g}
		Fix any $x\in\mathbb{R}^n$. Then, $\widehat{\partial}g(x)=\partial g(x)
		=T_1(x_1)\times\cdots\times T_n(x_n)$ with $T_i(x_i)=\{q{\rm sign}(x_i)|x_i|^{q-1}\}$ if $x_i\ne 0$, otherwise $T_i(x_i)=\mathbb{R}$, and $\partial^{\infty}g(x)=[\widehat{\partial}g(x)]^{\infty}$,
		where  $\widehat{\partial}g(x),\partial g(x)$ and $\partial^{\infty}g(x)$ denote the regular, limiting (or Mordukhovich) and horizontal subdifferential of $g$ at $x$ respectively, and $[\widehat{\partial}g(x)]^{\infty}$ denotes the horizon cone of $\widehat{\partial}g(x)$. Consequently, $g$ is a regular function.
	\end{lemma}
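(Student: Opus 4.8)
The plan is to exploit the separable product structure of $g$, namely $g(x)=\sum_{i=1}^n \theta(x_i)$ where $\theta(t):=|t|^q$, and to reduce the computation of all three subdifferentials to a one-dimensional analysis of $\theta$. First I would recall the standard calculus rule for separable sums (see \cite[Proposition 10.5]{RW09}): for a function that is a finite sum of functions acting on disjoint groups of coordinates, the regular (and likewise the limiting) subdifferential is the Cartesian product of the individual regular (limiting) subdifferentials, and the function is regular at $x$ if and only if each summand is regular at the corresponding coordinate. Thus it suffices to show, for the scalar function $\theta(t)=|t|^q$ with $0<q<1$, that $\widehat{\partial}\theta(t)=\partial\theta(t)=T_i(t)$ in the notation of the statement, and that $\theta$ is regular everywhere.

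For $t\neq 0$, $\theta$ is continuously differentiable near $t$ with $\theta'(t)=q\,{\rm sign}(t)|t|^{q-1}$, so both subdifferentials equal the singleton $\{q\,{\rm sign}(t)|t|^{q-1}\}$ and regularity is automatic. For $t=0$, I would verify directly from the definition $\widehat{\partial}\theta(0)=\{v: \theta(s)\ge vs + o(|s|)\}$ that every real $v$ qualifies: since $|s|^q/|s|=|s|^{q-1}\to+\infty$ as $s\to 0$, the difference quotient $(\theta(s)-vs)/|s| = |s|^{q-1}-v\,{\rm sign}(s)\to +\infty$, so $\theta(s)-vs\ge 0$ for all $s$ near $0$ and indeed $\widehat{\partial}\theta(0)=\mathbb{R}$. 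Then $\partial\theta(0)$, being the outer limit of $\widehat{\partial}\theta$ at nearby points, contains $\widehat{\partial}\theta(0)=\mathbb{R}$ and hence also equals $\mathbb{R}$; equality of the two forces regularity of $\theta$ at $0$ as well. Assembling the coordinates via the product rule gives $\widehat{\partial}g(x)=\partial g(x)=T_1(x_1)\times\cdots\times T_n(x_n)$ and the regularity of $g$.

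For the horizon subdifferential, I would use the general fact \cite[Theorem 8.49]{RW09} that for a function $h$ which is regular at $x$, one has $\partial^{\infty}h(x)=[\widehat{\partial}h(x)]^{\infty}$ (more precisely, the horizon subdifferential coincides with the recession/horizon cone of the regular subdifferential when the latter is nonempty and $h$ is regular there). Since we have just shown $g$ is regular at every $x$ with $\widehat{\partial}g(x)$ nonempty, the identity $\partial^{\infty}g(x)=[\widehat{\partial}g(x)]^{\infty}$ follows immediately; alternatively one can compute it coordinatewise as $[T_1(x_1)]^{\infty}\times\cdots\times[T_n(x_n)]^{\infty}$, where $[T_i(x_i)]^{\infty}=\{0\}$ if $x_i\neq 0$ and $=\mathbb{R}$ if $x_i=0$, and recognize this as $[\widehat{\partial}g(x)]^{\infty}$.

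There is no real obstacle here; the only point requiring a moment's care is the computation $\widehat{\partial}\theta(0)=\mathbb{R}$, where the non-Lipschitz blow-up of the difference quotient $|s|^{q-1}$ must be handled, but this works in our favor since it is precisely the superlinear growth of $|s|^{-(1-q)}$ that makes every linear functional a regular subgradient at the origin. The passage from the scalar facts to the product formula, and from regularity to the horizon-cone identity, are direct invocations of \cite{RW09}.
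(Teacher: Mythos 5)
The paper itself offers no written proof of this lemma (it is dismissed as an easy verification from the expression of $g$ and \cite[Definition 8.3]{RW09}), and your route --- reduce by separability to the scalar function $\theta(t)=|t|^q$, compute $\widehat{\partial}\theta$ and $\partial\theta$ directly, and assemble via the product rule \cite[Proposition 10.5]{RW09} --- is exactly that intended verification. The scalar computations are right: smoothness gives the singleton for $t\neq 0$, and the blow-up of $|s|^{q-1}$ gives $\widehat{\partial}\theta(0)=\mathbb{R}$, hence $\partial\theta(0)=\mathbb{R}$.

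One wrinkle should be tightened. In the convention of \cite{RW09} (Definition 7.25, characterized in Corollary 8.11), subdifferential regularity of an lsc function at a point is \emph{not} equivalent to $\widehat{\partial}\theta(0)=\partial\theta(0)$ alone; it additionally requires $\partial^{\infty}\theta(0)=[\widehat{\partial}\theta(0)]^{\infty}$. So your step ``equality of the two forces regularity of $\theta$ at $0$,'' followed by an appeal to regularity to obtain the horizon identity, is circular as ordered (and the citation to Theorem 8.49 is not the right one; the relevant statements are Corollary 8.11 and Proposition 10.5). The fix is already contained in your alternative computation: verify directly that $\partial^{\infty}\theta(0)=\mathbb{R}=[\widehat{\partial}\theta(0)]^{\infty}$ (the inclusion $[\widehat{\partial}\theta(0)]^{\infty}\subseteq\partial^{\infty}\theta(0)$ holds by taking constant sequences in Definition 8.3, and the reverse is immediate since everything is $\mathbb{R}$), note $\partial^{\infty}\theta(t)=\{0\}$ at $t\neq 0$ by local Lipschitz continuity, conclude regularity of $\theta$ everywhere from Corollary 8.11, and only then invoke the separable-sum rule to get the product formulas, $\partial^{\infty}g(x)=[\widehat{\partial}g(x)]^{\infty}$ (using that the horizon cone of a product of nonempty closed sets is the product of the horizon cones), and the regularity of $g$. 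With that reordering the argument is complete.
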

	
	%  We call a vector $x\in\mathbb{R}^n$ a regular (respectively, limiting) critical point of $F$ if $0\in\widehat{\partial}F(x)$ (respectively, $0\in\partial F(x)$). 
	Recall that $f$ is continuously differentiable. By combining Lemma \ref{subdiff-g} and \cite[Exercise 8.8]{RW09}, $\widehat{\partial} F(x) = \partial F(x)$ for all $x\in\mathbb{R}^n$ and function $F$ is regular.  % so the set of regular critical points of $F$ coincides with its limiting one. 
	In the sequel, we call a vector $x\in\mathbb{R}^n$ a critical point if $0\in\partial F(x)$, and we denote by ${\rm crit}F$ the set of the critical points of $F$.
	%  , which is also the stationary point
	%  set of \eqref{model}. 
	Next, we introduce a class of $L$-type stationary points for \eqref{model}.
	%-------------------------------------------------------------------------------------------
	\begin{definition}\label{L-stationary}
		\cite{Beck19}	A vector $x\in\mathbb{R}^n$ is called an $L$-type stationary point of problem \eqref{model} if there exists a constant $\mu>0$ such that
		$x\in{\rm prox}_{\mu^{-1}(\lambda g)}(x-\!\mu^{-1}\nabla \psi(x))$, 
		and it is called an $\epsilon$-approximate $L$-type stationary point of  \eqref{model} if there exists a constant $\mu>0$ such that
		$\min_{z\in {\rm prox}_{\mu^{-1}(\lambda g)}(x-\mu^{-1}\nabla \psi(x))}
		\mu\|z-x\|_{\infty} \leq \epsilon$.
	\end{definition}
	\begin{remark}
		It was shown in \cite{Beck19} that any optimal solution of minimizing a ${\cal C}^{1,1}$ function with group sparsity expression as a constraint or a penalty (or both) is an $L$-type stationary point. For \cref{model}, we claim that the set of $L$-type stationary points coincides with that of critical points. Obviously, the set of $L$-type stationary points are contained in that of critical points, so it suffices to argue that the converse inclusion holds. Pick any $\overline{x} \in {\rm crit}  F$. Define $\widetilde{g}(y):=\lambda g(y+\overline{x}) + \langle \nabla\!\psi(\overline{x}),y+\overline{x} \rangle$ for $y\in\mathbb{R}^n$. Since $\lambda g$ is prox-regular at $\overline{x}$ for $-\nabla\!\psi(\overline{x})$ by \cite[Example 2.3]{ochs2018}, the function $\widetilde{g}$ is prox-regular at $0$ for $0$. Since $\widetilde{g}$ is also prox-bounded\footnote{For the definitions of  prox-boundedness and prox-regularity, see \cite[Definitions 1.23 \& 13.27]{RW09}.} with the threshold of prox-boundedness being $\infty$, by \cite[Proposition 8.46 (f)]{RW09} the subgradient inequalities in the definition of prox-regularity can be taken to be global. That is, there exists $\gamma_0 > 0$ such that 
		$\widetilde{g}(y) >\widetilde{g}(0) - \frac{\gamma_0}{2}\|y\|^2 \ {\rm for \ all \ } y \neq 0,$
		which implies that for all $y\neq 0$ and $\gamma > \gamma_0$, 
		$\lambda g(y+\overline{x}) + \frac{\gamma}{2}\|y+\overline{x}-(\overline{x} - \frac{1}{\gamma}\nabla \psi(\overline{x}))\|^2 > \lambda g(\overline{x}) + \frac{\gamma}{2}\| \overline{x}-(\overline{x} - \frac{1}{\gamma}\nabla \psi(\overline{x}))\|^2.$
		Therefore, $\overline{x}$ is the unique minimizer of $\lambda g(\cdot) + \frac{\gamma}{2}\|\cdot - (\overline{x} - \frac{1}{\gamma}\nabla\psi(\overline{x}))\|^2$, which by Definition \ref{L-stationary} means that $\overline{x}$ is an $L$-type stationary point of \cref{model}.
	\end{remark}
	Next, we state
	some differential properties of $F$ in a subspace.
	%-----------------------------------------------------------------------------------------
	\begin{lemma}\label{property-FS}
		For the objective function $F$ of \eqref{model}, the following statements hold.
		\begin{itemize}
			\item [(i)] For any given index set $S\subseteq [n]$ and any given  $x\in\mathbb{R}^n\backslash \{0\}$
			with ${\rm supp}(x)\!=S$, the function $F_{S}$ is twice continuously
			differentiable at $x_{S}$ with
			\begin{subequations}
				\begin{align}
					\label{forder-derive}
					\nabla F_{S}(x_{S})=A_{S}^{\mathbb{T}}\nabla\!f(A_{S}x_{S})+\lambda q{\rm sign}(x_{S})\circ|x_{S}|^{q-1},\quad\\
					\label{sorder-derive}
					\nabla^2\!F_{S}(x_{S})=A_{S}^{\mathbb{T}}\nabla^2\!f(A_{S}x_{S})A_{S}
					+\lambda q(q\!-\!1){\rm Diag}(|x_{S}|^{q-2}),
				\end{align}
			\end{subequations}
			and the function $g_{S}$ is three times continuously differentiable at $x_{S}$ with
			\begin{equation}\label{three-deriveg}
				D^3g_{S}(x_{S})(w)
				=q(q\!-\!1)(q\!-\!2){\rm Diag}({\rm sign}(x_{S})\circ|x_{S}|^{q-3}\circ w)
				\quad\forall w\in\mathbb{R}^{|S|}.
			\end{equation}
			
			\item[(ii)] For any given bounded set $C\subseteq\mathbb{R}^n$ and any given constant $\kappa>0$,
			there exist $\widehat{c}_{1}\!>0, \widehat{c}_{2}>0$ and $\widehat{c}_{3}\!>0$
			such that for all  $x\in C\backslash \{0\}$ with $|x|_{\min}\ge\kappa$,
			\begin{align*}
				\|\nabla F_{{\rm supp}(x)}(x_{{\rm supp}(x)})\|\le \widehat{c}_{1},\
				\|\nabla^2F_{{\rm supp}(x)}(x_{{\rm supp}(x)})\|\le \widehat{c}_2,\\
				\|D^3g_{{\rm supp}(x)}(x_{{\rm supp}(x)})\|\le \widehat{c}_3.\qquad\qquad\qquad
			\end{align*}
			
			\item [(iii)] For any $x\in\mathbb{R}^n \backslash \{0\}$, ${\rm dist}(0,\partial F(x))
			=\|\nabla F_{{\rm supp}(x)}(x_{{\rm supp}(x)})\|$.
		\end{itemize}
	\end{lemma}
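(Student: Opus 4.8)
All three assertions are verifications resting on two structural facts: the scalar power $t\mapsto|t|^q$ is $C^\infty$ on $\mathbb{R}\backslash\{0\}$, and $g$ is separable. The plan for part~(i): first record that on $\mathbb{R}\backslash\{0\}$ the first three derivatives of $t\mapsto|t|^q$ are $q\,{\rm sign}(t)|t|^{q-1}$, $q(q\!-\!1)|t|^{q-2}$ and $q(q\!-\!1)(q\!-\!2)\,{\rm sign}(t)|t|^{q-3}$. Since ${\rm supp}(x)=S$ forces every coordinate of $x_S$ to be nonzero and $g_S(u)=\sum_{i\in S}|u_i|^q$ is a sum of functions each of one coordinate, $g_S$ is three times continuously differentiable at $x_S$ with gradient, Hessian and third derivative determined coordinatewise by these scalar derivatives; reading off the directional third derivative (interpreting $D^3g_S(x_S)(w)$ as the Hessian at $x_S$ of $u\mapsto\langle\nabla g_S(u),w\rangle$) gives \eqref{three-deriveg}. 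For $\psi_S(u)=f(A_Su)$, the chain rule and the $C^2$ smoothness of $f$ yield $\nabla\psi_S(u)=A_S^{\mathbb{T}}\nabla f(A_Su)$ and $\nabla^2\psi_S(u)=A_S^{\mathbb{T}}\nabla^2\!f(A_Su)A_S$ on all of $\mathbb{R}^{|S|}$; adding $\lambda$ times the first two derivatives of $g_S$ at $x_S$ produces \eqref{forder-derive}--\eqref{sorder-derive} and the twice continuous differentiability of $F_S$ at $x_S$.

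For part~(ii), set $M:=\sup_{x\in C}\|x\|<\infty$; then $\{Ax:x\in C\}$ has compact closure, on which $\nabla f$ and $\nabla^2\!f$ (continuous since $f\in C^2(\mathbb{R}^m)$) are bounded, say by $b_1$ and $b_2$. For $x\in C\backslash\{0\}$ with $|x|_{\min}\ge\kappa$ and $S:={\rm supp}(x)$, every $i\in S$ satisfies $\kappa\le|x_i|\le M$, so, using $q\!-\!1,q\!-\!2,q\!-\!3<0$, the entries of $|x_S|^{q-1},|x_S|^{q-2},|x_S|^{q-3}$ are bounded above by $\kappa^{q-1},\kappa^{q-2},\kappa^{q-3}$. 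Substituting these together with $\|A_S\|\le\|A\|$ and $|S|\le n$ into \eqref{forder-derive}, \eqref{sorder-derive} and \eqref{three-deriveg}, and applying the triangle inequality, gives constants $\widehat c_1,\widehat c_2,\widehat c_3>0$ depending only on $C,\kappa,A,q,\lambda,b_1,b_2$.

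For part~(iii), since $\psi$ is $C^1$ we have $\partial F(x)=\nabla\psi(x)+\lambda\partial g(x)$, and by \cref{subdiff-g} the set $\lambda\partial g(x)$ is the Cartesian product whose $i$-th factor is the singleton $\{\lambda q\,{\rm sign}(x_i)|x_i|^{q-1}\}$ for $i\in{\rm supp}(x)$ and all of $\mathbb{R}$ otherwise; hence the element of $\partial F(x)$ of least norm sets every coordinate outside ${\rm supp}(x)$ to zero, so ${\rm dist}(0,\partial F(x))$ equals the norm of the restriction of $\nabla\psi(x)+\lambda q\,{\rm sign}(x)\circ|x|^{q-1}$ to ${\rm supp}(x)$. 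Finally, $x$ vanishes off $S$, so $A_Sx_S=Ax$ and hence $(\nabla\psi(x))_S=(A^{\mathbb{T}}\nabla f(Ax))_S=A_S^{\mathbb{T}}\nabla f(A_Sx_S)$; comparing with \eqref{forder-derive} identifies that norm with $\|\nabla F_{{\rm supp}(x)}(x_{{\rm supp}(x)})\|$. I do not expect a genuine obstacle here: the only points needing care are the bookkeeping of constants in~(ii), the correct reading of $D^3g_S(x_S)(w)$ as a directional third derivative in~(i), and the restriction identity $A_Sx_S=Ax$ linking the subdifferential-distance formula to $\nabla F_{{\rm supp}(x)}$ in~(iii).
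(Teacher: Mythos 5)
Your proposal is correct and follows essentially the same route as the paper: parts (i) and (ii) are the straightforward coordinatewise/chain-rule and compactness verifications the paper omits, and your part (iii) — computing $\partial F(x)$ via \cref{subdiff-g} and \cite[Exercise 8.8]{RW09}, dropping the coordinates off the support where the factor is all of $\mathbb{R}$, and identifying the remaining vector with $\nabla F_{S}(x_S)$ through $A_Sx_S=Ax$ and \eqref{forder-derive} — is exactly the paper's argument.
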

	\begin{proof}
		Since parts (i) and (ii) are straightforward, we only prove part (iii).
		Fix any $x\in\mathbb{R}^n\backslash \{0\}$. Write $S={\rm supp}(x)$.
		From Lemma \ref{subdiff-g} and \cite[Exercise 8.8]{RW09},
		\[
		\partial F(x)=A^{\mathbb{T}}\nabla\!f(Ax)+T_1(x_1)\times\cdots\times T_n(x_n)
		\]
		where $T_i(x_i)$ has the same expression as in Lemma \ref{subdiff-g}.
		Then, we get ${\rm dist}(0,\partial F(x))=\|A_{S}^{\mathbb{T}}\nabla\!f(A_{S}x_{S})
		+\lambda q\,{\rm sign}(x_S)\circ|x_S|^{q-1}\|$. Together with \eqref{forder-derive},
		the result follows.
	\end{proof}
	%-----------------------------------------------------------------------------------------
	\subsection{Kurdyka-{\L}ojasiewicz property}\label{sec2.3}
	The past ten years have witnessed the significant role of the Kurdyka-{\L}ojasiewicz (KL)
	property of the objective function in the convergence analysis of first-order algorithms
	for nonconvex and nonsmooth optimization problems (see, e.g., \cite{Attouch10,Attouch13,Bolte14}).
	Next we recall its definition and establish the equivalence between the KL property of exponent $1/2$ of $F$
	and that of $F_{S}$.
	%----------------------------------------------------------------------------------------
	\begin{definition}\label{KL-Def}
		A proper extended real-valued function $h\!:\mathbb{R}^n\to(-\infty,\infty]$ is said to
		have the KL property at a point $\overline{x}\in{\rm dom} \partial h$ if
		there exist $\eta\in(0,\infty]$, a neighborhood $\mathcal{U}$ of $\overline{x}$,
		and a continuous concave function $\varphi\!:[0,\eta)\to\mathbb{R}_{+}$ satisfying
		\begin{equation}\label{vphi-cond}
			\varphi(0)=0,\varphi\ {\rm is\ continuously\ differentiable\ on}\ (0,\eta)\ {\rm and}\
			\varphi'(s)>0,\ \forall s\in(0,\eta)
		\end{equation}
		such that for all $x\!\in\mathcal{U}\cap\big\{x\in\!\mathbb{R}^n\,|\,h(\overline{x})<h(x)<h(\overline{x})+\eta\}$,
		$$\varphi'(h(x) - h(\overline{x})) {\rm dist}(0,\partial h(x))\!\ge 1.$$
		If $\varphi$ can be chosen as $\varphi(s)=c\sqrt{s}$ for some constant $c>0$, then $h$ is said to have the KL property	of exponent $1/2$ at $\overline{x}$.
		If $h$ has the KL property of exponent $1/2$ at each point of ${\rm dom}\,\partial h$,
		then $h$ is called a KL function of exponent $1/2$.
	\end{definition}
	\begin{remark}\label{KL-remark}
		By \cite[Lemma 2.1]{Attouch10} a proper lsc function $h\!:\mathbb{R}^n\to(-\infty,\infty]$
		has the KL property of exponent $1/2$ at all noncritical points. Thus, to show that
		it is a KL function of exponent $1/2$, it suffices to check its KL property
		of exponent $1/2$ at critical points.
		On the calculation of KL exponent, we refer the readers to the recent works \cite{lipong18,wu21}.
	\end{remark}
	%--------------------------------------------------------------------------------------
	
	The following proposition establishes the equivalence between the KL property of
	exponent $1/2$ of $F$ and that of $F_S$.
	\begin{proposition}\label{prop-FS-KL}
		For any given $\overline{x}\in\mathbb{R}^n  \backslash \{0\}$,
		$F$ has the KL property of exponent $1/2$ at $\overline{x}$ if and only if
		$F_{\overline{S}}$ with $\overline{S}= {\rm supp}(\overline{x})$ has
		the KL property of exponent $1/2$ at $\overline{u}=\overline{x}_{\overline{S}}$.
	\end{proposition}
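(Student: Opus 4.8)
The plan is to exploit the local product structure of $F$ near $\overline{x}\ne 0$. Since $\overline{x}$ has support $\overline{S}$, every entry $\overline{x}_i$ with $i\in\overline{S}$ is nonzero, so there is a radius $\delta>0$ such that on the ball $B(\overline{x},\delta)$ every point $x$ has $\mathrm{sign}(x_i)=\mathrm{sign}(\overline{x}_i)$ and $x_i\ne 0$ for $i\in\overline{S}$, while $x_i$ may be arbitrary for $i\in\overline{S}^c$. On this ball the coordinates split: writing $x=(x_{\overline{S}};x_{\overline{S}^c})$ we have, by \cref{FS-fun} and the additive separability of $g$, $F(x)=F_{\overline{S}}(x_{\overline{S}})+\lambda\sum_{i\in\overline{S}^c}|x_i|^q$ — \emph{provided} we stay on $B(\overline{x},\delta)$ and note $A x = A_{\overline{S}}x_{\overline{S}}+A_{\overline{S}^c}x_{\overline{S}^c}$; the cleanest route is actually to observe that the extra coordinates cause a technical nuisance, so I would first reduce to the case where we only perturb the $\overline{S}$-block. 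Concretely, the KL inequality at $\overline{x}$ only needs to be checked on a neighborhood, and $\mathrm{dist}(0,\partial F(x))$ is computed in \cref{property-FS}(iii) as $\|\nabla F_{\mathrm{supp}(x)}(x_{\mathrm{supp}(x)})\|$. The key observation is that for $x\in B(\overline{x},\delta)$ with $x_{\overline{S}^c}$ near $0$ but possibly nonzero, $\mathrm{supp}(x)\supseteq\overline{S}$ and the gradient $\nabla F_{\mathrm{supp}(x)}$ restricted to the $\overline{S}^c$-coordinates has entries $\lambda q\,\mathrm{sign}(x_i)|x_i|^{q-1}$ which blow up as $x_i\to 0$; hence these coordinates only help make $\mathrm{dist}(0,\partial F(x))$ larger, so the KL inequality for $F$ is easiest to violate along the slice $x_{\overline{S}^c}=0$.

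So the core of the argument is the equivalence along that slice, i.e. between $F$ restricted to $\{x: x_{\overline{S}^c}=0\}$ near $\overline{x}$ and $F_{\overline{S}}$ near $\overline{u}$. These two functions are literally the same under the identification $x\mapsto x_{\overline{S}}$: $F(x_{\overline{S}};0)=f(A_{\overline{S}}x_{\overline{S}})+\lambda g_{\overline{S}}(x_{\overline{S}})=F_{\overline{S}}(x_{\overline{S}})$, and $\mathrm{dist}(0,\partial F(x_{\overline{S}};0))=\|\nabla F_{\overline{S}}(x_{\overline{S}})\|=\mathrm{dist}(0,\partial F_{\overline{S}}(x_{\overline{S}}))$ by \cref{property-FS}(iii) and the fact that $F_{\overline{S}}$ is $C^2$ at $x_{\overline{S}}$. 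Thus the desingularizing function $\varphi(s)=c\sqrt{s}$ works for one if and only if it works for the other along the slice, giving one direction trivially and the ``only if'' direction along the slice as well.

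For the full ``only if'' direction (KL of $F$ at $\overline{x}$ implies KL of $F_{\overline{S}}$ at $\overline{u}$) we are done by the slice identification. For the ``if'' direction (KL of $F_{\overline{S}}$ at $\overline{u}$ implies KL of $F$ at $\overline{x}$) we must handle general $x\in B(\overline{x},\delta)$ with $x_{\overline{S}^c}\ne 0$. Here I would argue as follows: shrink $\delta$ so that $|x_i|^{q-1}\ge |x_i|^{q-1}$ is large, more precisely so that $\lambda q|x_i|^{q-1}\ge 1$ for $i\in\overline{S}^c$ whenever $0<|x_i|\le\delta$ (possible since $q-1<0$). If for some $i\in\overline{S}^c$ we have $x_i\ne0$, then $\mathrm{dist}(0,\partial F(x))=\|\nabla F_{\mathrm{supp}(x)}(x_{\mathrm{supp}(x)})\|\ge |[\nabla F_{\mathrm{supp}(x)}]_i| = |[A^{\mathbb T}\nabla f(Ax)]_i + \lambda q\,\mathrm{sign}(x_i)|x_i|^{q-1}|$, and by the triangle inequality and boundedness of $\|A^{\mathbb T}\nabla f(Ax)\|$ on the compact ball this is $\ge \lambda q|x_i|^{q-1} - M \ge 1$ for $\delta$ small; combined with continuity of $F$ and the fact that $\varphi'$ is bounded below by a positive constant on any interval $[\epsilon,\eta)$, one checks the KL inequality holds at such $x$ by taking $\eta$ small and $c$ large. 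On the remaining points, $x_{\overline{S}^c}=0$, and we invoke the slice identification together with the KL property of $F_{\overline{S}}$ at $\overline{u}$. Choosing the final $\varphi$, neighborhood, and $\eta$ as the ``worst'' of the two cases completes the proof.

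The main obstacle I anticipate is bookkeeping around the off-support coordinates: one has to make sure that the KL inequality is verified uniformly over \emph{all} $x$ in a single neighborhood $\mathcal U$, not just the slice, and that the threshold $\eta$ and the constant $c$ can be chosen simultaneously for both regimes. The blow-up of $\nabla g$ in the off-support directions is a feature (it makes $\mathrm{dist}(0,\partial F(x))$ large and the inequality easy there), but one must also ensure that in that regime $F(x)-F(\overline{x})$ stays inside $(0,\eta)$, which forces $\delta$ to be coupled to $\eta$ via continuity of $F$ — a routine but slightly delicate estimate. Everything else (the $C^2$ smoothness of $F_{\overline{S}}$ at $\overline{u}$, the subdifferential formula, the separability of $g$) is already supplied by \cref{subdiff-g}, \cref{property-FS}, and \cref{prox-bound} in the excerpt.
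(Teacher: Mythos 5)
Your proposal is correct and takes essentially the same route as the paper's proof: the slice identification $F(x_{\overline{S}};0)=F_{\overline{S}}(x_{\overline{S}})$ together with ${\rm dist}(0,\partial F(x))=\|\nabla F_{{\rm supp}(x)}(x_{{\rm supp}(x)})\|$ from \cref{property-FS}(iii) handles the necessity direction and the on-slice case of sufficiency, while the blow-up of the off-support gradient entries $\lambda q|x_i|^{q-1}$ (minus the bounded term $|A_i^{\mathbb{T}}\nabla f(Ax)|$) forces ${\rm dist}(0,\partial F(x))\ge 1$ off the slice, exactly as in the paper's Case 2, with the same final bookkeeping of $\varepsilon$, $\eta$ and the constant $c$.
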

	\begin{proof}
		From Lemma \ref{subdiff-g} and Lemma \ref{property-FS} (iii), one can verify that  if $\overline{x}\in\mathbb{R}^n  \backslash \{0\}$, $\overline{x}\in{\rm crit}F$ if and only if $\overline{x}_{\overline{S}}\in{\rm crit}F_{\overline{S}}$
		for $\overline{S}={\rm supp}(\overline{x})$. Then, by \cref{KL-remark},
		it suffices to consider the case that $\overline{x}\in{\rm crit}F \backslash \{0\}$.
		
		Necessity. Since $F$ has the KL property of exponent $1/2$ at $\overline{x}$,
		there exist $\eta>0,\varepsilon>0$ and $c>0$ such that
		for all $x\in\Gamma(\varepsilon,\eta)\!:=\big\{x\in\mathbb{R}^n\,|\,\|x-\overline{x}\|\leq\varepsilon,
		F(\overline{x})<F(x) <F(\overline{x})+\eta\big\}$,
		\begin{equation}\label{KL-ineq1}
			{\rm dist}(0,\partial F(x))\ge c\sqrt{F(x)-F(\overline{x})}.
		\end{equation}
		Since $\overline{x}_i\ne 0$ for each $i\in\overline{S}$,
		there exists $\varepsilon'>0$ such that for all $z\in\mathbb{B}(\overline{x},\varepsilon')$,
		$z_i\ne 0$ with each $i\in\overline{S}$. Set $\widetilde{\varepsilon}\!:=\min\{\varepsilon,\varepsilon'\}$.
		Pick any $u\in\Gamma_{\overline{S}}(\widetilde{\varepsilon},\eta)
		\!:=\!\big\{u\in\mathbb{R}^{|\overline{S}|}\,|\,\|u-\overline{u}\| \le\widetilde{\varepsilon},
		F_{\overline{S}}(\overline{u})< F_{\overline{S}}(u)\!<F_{\overline{S}}(\overline{u})+\eta\big\}$.
		Let $x\in\mathbb{R}^n$ with $x_{\overline{S}}=u$ and $x_{\overline{S}^c}=0$.
		Clearly, ${\rm supp}(x)={\overline{S}}$. From \cref{property-FS} (iii),
		it follows that ${\rm dist}(0,\partial F(x))=\|\nabla F_{\overline{S}}(u)\|$.
		Also, from $F_{\overline{S}}(u)=F(x)$ and $F_{\overline{S}}(\overline{u})=F(\overline{x})$,
		we have $x\in\Gamma(\varepsilon,\eta)$. Along with \eqref{KL-ineq1}, we get
		\[
		\|\nabla F_{\overline{S}}(u)\| = {\rm dist}(0,\partial F(x))\ge c\sqrt{F(x)-F(\overline{x})}
		=c\sqrt{F_{\overline{S}}(u)-F_{\overline{S}}(\overline{u})}.
		\]
		By the arbitrariness of $u$ in $\Gamma_{\!{\overline{S}}}(\varepsilon,\eta)$,
		$F_{\overline{S}}$ has the KL property of exponent $1/2$ at $\overline{u}$.
		
		Sufficiency. Since $F_{\overline{S}}$ has the KL property of exponent $1/2$ at $\overline{u}$,
		there are $\widetilde{\varepsilon}>0,\widetilde{\eta}>0,c>0$ such that for all
		$u\in\Gamma_{\overline{S}}(\widetilde{\varepsilon},\widetilde{\eta})
		\!:=\!\big\{u\in\mathbb{R}^{|\overline{S}|}\,|\,\|u-\overline{u}\|\leq\widetilde{\varepsilon},
		F_{\overline{S}}(\overline{u})<F_{\overline{S}}(u) <F_{\overline{S}}(\overline{u})+\widetilde{\eta}\big\}$,
		\begin{equation*}
			{\rm dist}(0,\partial F_{\overline{S}}(u))\geq c\sqrt{F_{\overline{S}}(u)-F_{\overline{S}}(\overline{u})}.
		\end{equation*}
		Since every entry of $\overline{u}$ is nonzero, by reducing $\widetilde{\varepsilon}$
		if necessary, for any $u$ with $\|u-\overline{u}\|\leq\widetilde{\varepsilon}$, its entries are all nonzero. By Lemma \ref{property-FS} (iii),
		the last inequality can be rewritten as
		\begin{equation}\label{sm1}
			\|\nabla F_{\overline{S}}(u)\|\geq  c\sqrt{F_{\overline{S}}(u)-F_{\overline{S}}(\overline{u})}.
		\end{equation}
		By continuity, there exists $\varepsilon'>0$ such that
		for all $x\in\mathbb{B}(\overline{x},\varepsilon')$, ${\rm supp}(x)\supseteq\overline{S}$.
		Let $\widehat{\varpi}\!:=\!{\displaystyle\max_{\|x-\overline{x}\|\le 1}}\|A^{\mathbb{T}}\nabla\!f(Ax)\|_{\infty}$.
		Set $\varepsilon\!:=\min\big\{\frac{1}{4},\widetilde{\varepsilon},\varepsilon',\big(\frac{\widehat{\varpi}+1}
		{\lambda q}\big)^{\frac{1}{q-1}}\big\}$ and $\eta\!:=\!\frac{1}{2}\min\{\widetilde{\eta},1\}$.
		Let $T_1(\varepsilon,\eta):=\big\{x\in\Gamma(\varepsilon,\eta)\,|\,{\rm supp}(x)=\overline{S}\big\}$
		where $\Gamma(\varepsilon,\eta)$ is defined as above, and
		$T_2(\varepsilon,\eta)\!:=\Gamma(\varepsilon,\eta)\backslash T_1(\varepsilon,\eta)$.
		Pick any $x\in\Gamma(\varepsilon,\eta)$. We proceed the proof by two cases.
		
		\noindent
		{\bf Case 1: $x\in T_1(\varepsilon,\eta)$}. Let $u=x_{\overline{S}}$.
		We have $u\in\Gamma_{\overline{S}}(\varepsilon,\eta)\subseteq \Gamma_{\overline{S}}(\widetilde{\varepsilon}, \widetilde{\eta})$, where the second inclusion
		is due to $\varepsilon < \tilde{\varepsilon}$ and $\eta < \tilde{\eta}$.
		From \cref{property-FS} (iii) and \eqref{sm1},
		\[
		{\rm dist}(0,\partial F(x))=\|\nabla F_{\overline{S}}(u)\|
		\geq c\sqrt{F_{\overline{S}}(u)-F_{\overline{S}}(\overline{u})}
		= c \sqrt{F(x)-F(\overline{x})}.
		\]
		
		\noindent
		{\bf Case 2: $x\in T_2(\varepsilon,\eta)$}. Recall that ${\rm supp}(x)\supseteq\overline{S}$.
		By the definition of $T_2(\varepsilon,\eta)$, there exists $i \notin\overline{S}$
		such that $0<|x_i|\le\varepsilon$. Write $S\!:={\rm supp}(x)$.
		Since $F_{S}$ is continuously differentiable at $x_{S}$ by \cref{property-FS} (i),
		for all $i\in S\backslash\overline{S}$ it holds that
		\begin{equation}\label{sm2}
			\begin{aligned}
				{\rm dist}(0,\partial F(x))&\ge |[\nabla F_S(x)]_i|
				=\big|A_i^{\mathbb{T}}\nabla\!f(Ax) + \lambda q{\rm sign}(x_i)|x_i|^{q-1}\big| \\
				&\ge \lambda q |x_i|^{q-1} -\big|A_i^{\mathbb{T}}\nabla\!f(Ax)\big|
				>\lambda q\varepsilon^{q-1}-\widehat{\varpi}\ge1 ,
			\end{aligned}
		\end{equation}
		where the last inequality follows by the definition of $\varepsilon$.
		Since $F(\overline{x})<F(x)<F(\overline{x})+\eta$ and $0<\eta<1$,
		we have $\sqrt{F(x)-F(\overline{x})}<1$. Together with \eqref{sm2}, we have
		\[
		{\rm dist}(0,\partial F(x))>\sqrt{F(x)-F(\overline{x})}.
		\]
		From the above two cases and the arbitrariness of $x$ in $\Gamma(\varepsilon,\eta)$,
		the function $F$ has the KL property of exponent $1/2$ at $\overline{x}$.
		Thus, the proof is completed.
	\end{proof}
	%----------------------------------------------------------------------------------------
	\section{A hybrid of PG and subspace regularized Newton methods}\label{sec3}
	
	In this section, we describe the iterate steps of HpgSRN, a hybrid of PG
	and subspace regularized Newton methods for solving problem \cref{model}. First, we introduce the basic iterate of
	the PG method with a monotone line search (PGls), i.e., a monotone version of SpaRSA \cite{Wright09}. Let $x\in\mathbb{R}^n$ be the current iterate and $\mu>0$ be an initial step-size. One step of the PGls returns a new iterate $x^{+}$ and the used step-size $\mu_{+}$
	such that $F(x^{+})$ has a certain decrease.
	%----------------------------------------------------------------------------------------
	\begin{algorithm}[H]
		\caption{($[x^{+},\mu_{+}]:=\mathcal{G}(x,\mu;\widetilde{\tau},\widetilde{\alpha},\lambda)$)}\label{gist}
		\textbf{Input:} $x\in\mathbb{R}^n$ and parameters $\mu>0,\widetilde{\tau}>1$ and $\widetilde{\alpha}>0$.
		
		Let $l=0,\mu_{l} = \mu$ and $ x^l \in {\rm prox}_{\mu_{l}^{-1}(\lambda g)}\big(x-\mu_{l}^{-1}\nabla \psi(x)\big)$.
		
		\textbf{while}\ $F(x^l)>F(x)-({\widetilde{\alpha}}/{2})\|x^l-x\|^2$
		\begin{itemize}
			\item Let $\mu_{l+1}=\widetilde{\tau}\mu_{l}$ and $l\gets l+1$;
			
			\item Seek $x^l\in {\rm prox}_{\mu_{l}^{-1}(\lambda g)}\big(x-\mu_{l}^{-1}\nabla\psi(x)\big)$;
		\end{itemize}
		\textbf{end (while)}
		
		\noindent
		Let $x^{+} = x^l$ and $\mu_{+} = \mu_{l}$.
	\end{algorithm}
	%---------------------------------------------------------------------  
	\begin{remark}\label{remark-Alg1}
		We claim that the number of backtrackings of \cref{gist} is finite. For this purpose, define $\widetilde{h}_{\mu,x}(z):=\langle \nabla \psi(x), z-x\rangle + \frac{\mu}{2}\|z - x\|^2 + \lambda g(z)$ for $z\in\mathbb{R}^n$.
		For each $l \in \mathbb{N}$,  from $\mu_l \geq \mu$ and $x^l\in{\rm prox}_{\mu_{l}^{-1}(\lambda g)}\big(x-\mu_{l}^{-1}\nabla\psi(x)\big)$,
		it follows that
		\begin{equation}\label{hmu-ineq0}
			\widetilde{h}_{\mu,x}(x^l)
			\leq \langle \nabla \psi(x), x^l-x\rangle +\frac{\mu_l}{2}\|x^l - x\|^2
			+\lambda g(x^l) \leq \lambda g(x) = \widetilde{h}_{\mu,x}(x).
		\end{equation}
		Since $\widetilde{h}_{\mu,x}$ is continuous and coercive, the set $\mathcal{L}_{\widetilde{h}_{\mu,x}}:= \{z\in\mathbb{R}^n\ | \ \widetilde{h}_{\mu,x}(z) \leq \widetilde{h}_{\mu,x}(x)\}$ is compact.
		Since $\nabla\psi$ is continuously differentiable,
		there exists $L_{x} > 0$ such that for any $y, w\in \mathcal{L}_{\widetilde{h}_{\mu,x}}$, $\|\nabla \psi(y) - \nabla \psi(w)\|\leq L_x\|y-w\|$. When $\mu_l \geq L_x + \widetilde{\alpha}$, from $x,x^l \in \mathcal{L}_{\widetilde{h}_{\mu,x}}$ and the descent lemma \cite[Proposition A.24]{NP97},
		\begin{align}\label{ineq-Alg1}
			F(x^l)&\le \psi(x)+\langle\nabla\psi(x), x^l\!-\!x\rangle
			+\frac{L_x}{2}\|x^l\!-\!x\|^2 + \lambda g(x^l)\nonumber \\
			&\le\psi(x)+ \langle\nabla\psi(x),x^l\!-\!x\rangle
			+ \frac{\mu_l}{2}\|x^l\!-\!x\|^2 + \lambda g(x^l) -\frac{\widetilde{\alpha}}{2}\|x^l\!-\!x\|^2\nonumber \\
			&\le \psi(x)+ \lambda g(x) -\frac{\widetilde{\alpha}}{2}\|x^l\!-\!x\|^2=F(x)-\frac{\widetilde{\alpha}}{2}\|x^l\!-\!x\|^2,
		\end{align}
		where the last inequality is due to \eqref{hmu-ineq0}.
		This implies that the line search procedure stops in the $l$th backtracking. The above arguments only use the Lipschitz continuity of $\nabla\!\psi$ on the set $\mathcal{L}_{\widetilde{h}_{\mu,x}}$ rather than its global Lipschitz continuity, and the coercivity of $\widetilde{h}_{\mu,x}$ rather than that of $g$. For more discussion on line search of PG methods in a general setting, see also \cite{bello16,salzo17} for the convex $\psi$ and \cite{de21} for the nonconvex $\psi$.
	\end{remark}
	
	Now we are in a position to present the detailed iterate steps of our HpgSRN.
	%--------------------------------------------------------------------------------------------
	\begin{algorithm}[!ht]
		\caption{(a hybrid of PG and subspace regularized Newton methods)}\label{hybrid}
		\textbf{Initialization:} Choose $\widetilde{\tau}\!>1,\widetilde{\alpha}>0,$ $\mu_{\max}>\mu_{\min}> 0, \sigma\in(0,\frac{1}{2}],\varrho\in(0,\frac{1}{2}),
		\beta\in(0,1)$, $b_1 > 1$ and $b_2 > 0$.
		Choose an initial $x^0\in\mathbb{R}^n$ and  a tolerance  $\epsilon \geq 0$. Let $k\!=0$. \\
		
		\textbf{Step 1: proximal gradient step}	
		\begin{enumerate}
			
			\item[(S1)] Choose an initial step-size $\mu_{k} \in [\mu_{\min}, \mu_{\max}]$. Set $[\overline{x}^k,\overline{\mu}_k]=\mathcal{G}(x^k,\mu_{k};\widetilde{\tau},\widetilde{\alpha},\lambda)$.
			
			\item[(S2)] {\bf If} $ \overline{\mu}_k\|x^k - \overline{x}^k\|_{\infty} \leq \epsilon$, output $x^k$; 
			
			{\bf otherwise} go to (S3).
			
			%	\item[(S3)] Let $S_k={\rm supp}(x^k)$, $u^k=x_{S_k}^k$, $\overline{S}_k={\rm supp}(\overline{x}^k)$ and $\overline{u}^k=\overline{x}_{\overline{S}_k}^k$.
			
			\item[(S3)]
			%             {\color{blue} If ${\rm sign}(x^k) \neq {\rm sign}(\overline{x}^k)$, let $x^{k+1}\!:=\!\overline{x}^k, \ k\gets k+1$ and go to (S1). Otherwise, if in addition $\overline{\mu}_k\!+\!\lambda q(q\!-\!1)|x^k|_{\min}^{q-2}\!\ge \frac{1}{2}\overline{\omega}_k$
				%             with $\overline{\omega}_k:=\overline{\mu}_k\!+\!\lambda q(q\!-\!1)|\overline{x}^k|_{\min}^{q-2}$,
				% 			then go to \textbf{Step 2}; else set $x^{k+1}\!:=\!\overline{x}^k$ and $ k\gets k+1$ and go to (S1).}		
			Let $\overline{\omega}_k=\overline{\mu}_k\!+\!\lambda q(q\!-\!1)|\overline{x}^k|_{\min}^{q-2}$. {\bf If} 
			\begin{equation}\label{if-else}
				{\rm sign}(x^k) = {\rm sign}(\overline{x}^k)\ \ {\rm and}\ \
				\overline{\mu}_k\!+\!\lambda q(q\!-\!1)|x^k|_{\min}^{q-2}\!\ge \frac{1}{2}\overline{\omega}_k,
			\end{equation}
			{\bf then} go to {\bf Step 2};
			
			{\bf otherwise} let $x^{k+1} = \overline{x}^k$ and $k\gets k+1$.  Go to {\bf Step 1}.
		\end{enumerate}
		
		\textbf{Step 2: subspace regularized Newton step}
		\begin{enumerate}
			\item[(S4)]\label{SNT-S4} Let $S_k={\rm supp}(x^k)$ and $u^k=x_{S_k}^k$.
			Seek a subspace Newton direction $\Delta u^k$ by solving
			$G^k\Delta u= -\!\nabla F_{S_k}(u^k)$, where
			$G^k\!=\!\nabla^2 F_{S_k}(u^k)\!+\!(b_1\zeta_k\!+\!b_2\|\nabla F_{S_k}(u^k)\|^{\sigma})I$ with $\zeta_k\!=[-\lambda_{\min}(\nabla^2 F_{S_k}(u^k))]_{+}$.
			Let $d^k_{S_k}=\Delta u^k$ and $d_{S_k^{c}}^k=0$.
			
			\item[(S5)]\label{SNT-S5} Let $m_k$ be the smallest nonnegative integer $m$ such that
			\begin{equation}\label{ls-NT}
				F_{S_k}(u^k\!+\!\beta^m d_{S_k}^k)\le F_{S_k}(u^k) +\varrho\beta^m\langle \nabla F_{S_k}(u^k),d^k_{S_k}\rangle.
			\end{equation}
			
			\item[(S6)] Let $\alpha_k=\beta^{m_k}$ and $x^{k+1}=x^k+\alpha_k d^k$ and $k \gets k+1$. Go to \textbf{Step 1}.
		\end{enumerate}
	\end{algorithm}
	\begin{remark}\label{remark-hybrid}
		{\bf (a)} \cref{hybrid} uses $ \overline{\mu}_k\|x^k-\overline{x}^k\|_{\infty} \leq \epsilon$ as the stopping rule, which by Definition \ref{L-stationary} means that the output $x^k$ is an $\epsilon$-approximate $L$-type stationary point.
		
		\noindent
		{\bf(b)} Every iterate of Algorithm \ref{hybrid} executes Step 1,
		but does not necessarily  perform Step 2. Step 1 aims to ensure the convergence of
		the whole iterate sequence, while Step 2 is a subspace regularized Newton step
		used to enhance the convergence speed whenever the iterates are stable. The first condition in \eqref{if-else}
		aims to check whether the sign supports of the iterates are stable,
		while the second one is to ensure that  $|x^k|_{\min}$ is sufficiently away from $0$; see \cref{prop-xbark}. The switching criterion \eqref{if-else} plays
		a crucial role in the later convergence analysis. 
		When setting $\epsilon = 0$ and \cref{hybrid} generates an infinite sequence,
		we will show in \cref{supp-lemma} that under \cref{ass1}, after a finite number of iterates,
		\cref{hybrid} reduces to a regularized Newton method to minimize  $F_{S_*}$ for some $S_* \subseteq [n]$.
		% whenever
		%  Algorithm \ref{hybrid} does not stop within a finite number of steps.
		
		\noindent
		{\bf(c)} 
		We claim that \cref{hybrid} is well defined.
		By \cref{remark-Alg1}, for each $k\in\mathbb{N}$, the line search step in (S1) is well defined. Next we see that when the iteration goes from Step 1 to Step 2, it is necessary that $S_k \not= \emptyset$. As in this case \eqref{if-else} is satisfied, $x^k\not=0$ must hold. If not, by \eqref{if-else}, $\overline{x}^k=0$. So the termination condition in (S2) is satisfied and the algorithm stops. Finally it suffices to argue that 
		the line search in (S5) will terminate after a finite number of backtrackings. By \cref{property-FS} (i), $F_{S_k}$ is continuously differentiable at $u^k$, which along with  $G^k\succ 0$ implies that 
		\begin{equation}\label{descent-direction}
			\langle\nabla F_{S_k}(u^k), d^k_{S_k}\rangle = - \langle G^kd_{S_k}^k, d_{S_k}^k\rangle <0, 
		\end{equation}
		i.e., $d^k_{S_k}$ is a descent direction of $F_{S_k}$ at $u^k$. In addition, $F_{S_k}$ is bounded from below on $\mathbb{R}^{|S_k|}$ because $f$ is bounded from below on $\mathbb{R}^m$. By following the same arguments as those for \cite[Lemma 3.1]{Wright02}, the smallest nonnegative integer $m_k$ satisfying \cref{ls-NT} exists. Therefore, \cref{hybrid} is well defined.%, i.e., \cref{ls-NT} is satisfied after $m_k$ times backtrackings. 

		From the iterate steps of \cref{hybrid},
		the sequence $\{x^k\}_{k\in\mathbb{N}}$ consists of two parts, i.e., $\{x^k\}_{k\in\mathbb{N}}=\{x^k\}_{k\in\mathcal{K}_1}\cup\{x^k\}_{k\in \mathcal{K}_2}$,
		where
		\[
		\mathcal{K}_1\!:=\big\{k\in\mathbb{N}\ |\ x^{k+1}\ {\rm is\ generated\ by\ Step \ 1}\big\}
		\ \ {\rm and}\ \ \mathcal{K}_2:=\mathbb{N}\backslash \mathcal{K}_1.
		\] 
		It is clear now that for $k\in\mathcal{K}_2$, $S_k \not= \emptyset,$ that is, $x^k$ has a nonempty support.
		
		%  We see that if the following two conditions in (S3) hold simultaneously:
		%  \begin{equation}\label{if-else}
			%      	{\rm sign}(x^k) = {\rm sign}(\overline{x}^k)\ \ {\rm and}\ \
			%  				\overline{\mu}_k\!+\!\lambda q(q\!-\!1)|x^k|_{\min}^{q-2}\!\ge \frac{1}{2}\overline{\omega}_k,
			%  \end{equation}
		%  then an iterate switches to Step 2. The first condition in \eqref{if-else}
		%  aims to check whether the sign supports of the iterates are stable,
		%  while the second one is to ensure that  $|x^k|_{\min}$ is sufficiently away from $0$; see \cref{prop-xbark} (i). The switching criterion in \eqref{if-else} plays
		% a crucial role in the later convergence analysis; see \cref{lemma-bound} (ii).
		
		\noindent
		{\bf(d)} Although \cref{hybrid} is a hybrid of PG and second-order methods,
		it is not a special case of ZeroFPR \cite{Themelis18} and FBTN \cite{Themelis19} due to
		the following four aspects. Firstly, each iterate of \cref{hybrid} does not
		necessarily perform Newton step, while each iterate of ZeroFPR and
		FBTN must execute a second-order step. Secondly, \cref{hybrid} is using
		the Armijo line search, which is different from the ones used in
		ZeroFPR and FBTN. Let  $F_{\gamma}$ denote the forward-backward envelope of $F$ associated to $\gamma >0$, and $\eta>0$ be a constant related to the (local) Lipschitz constant of $\nabla \psi$. For \eqref{model}, the line search of ZeroFPR
		is to seek the smallest nonnegative integer $m_k$ of
		those $m$'s such that
		\[
		F_{\gamma}(\overline{x}^k + \beta^m\overline{d}^k) -F_{\gamma}(x^k)
		\le -\eta\|x^k-\overline{x}^k\|^2.
		\]
		Then set $x^{k+1}=\overline{x}^k + \beta^{m_k}\overline{d}^k$,
		where $\overline{d}^k$ is a Newton-type direction at $\overline{x}^k$ rather than $x^k$; and the line search of FBTN is to seek the smallest nonnegative integer $m_k$ of those $m$'s such that
		\[
		F_{\gamma}\left((1-\beta^m) \overline{x}^k+\beta^m(x^k+d^k)\right) -F_{\gamma}(x^k) \leq -\eta \|x^k - \overline{x}^k\|^2,
		\]
		and then set $x^{k+1}=(1-\beta^{m_k}) \overline{x}^k+\beta^{m_k}(x^k+d^k)$,
		where $d^k$ is a second-order direction at $x^k$.
		We observe that the decrease of the successive iterates for ZeroFPR and FBTN,
		i.e.  $F_{\gamma}(x^{k+1})-F_{\gamma}(x^{k})$, is controlled by $ -\|x^k-\overline{x}^k\|^2$,
		while the decrease of the successive iterates for Step 2 of \cref{hybrid}, i.e., $F(x^{k+1})-F(x^{k})$, is
		controlled by the curve ratio $\alpha_k\langle \nabla F_{S_k}(u^k),d^k_{S_k}\rangle$.
		%  which, as will be shown in \cref{curve-ineq}, can also be controlled by $\|x^k-\overline{x}^k\|^2$. 
		Thirdly,  the line search procedures of ZeroFPR and FBTN involve computing the forward-backward  envelope of $F$, which means that prox-gradient evaluations are needed at each backtracking trial and this is not the case for (S5) of  Algorithm 3.2. Finally, the global convergence analysis of ZeroFPR requires its second-order direction $d^k$ to satisfy
		\begin{equation}\label{dboundxkxbar}
			\exists\ {\rm a\ constant}\ \widehat{c}>0\ {\rm such\ that}\
			\|d^k\|\leq \widehat{c} \|x^k - \overline{x}^k\|\ \ {\rm for\ all}\ k,
		\end{equation}
		%  which, as discussed in \cite[section 5.4]{themelis21}, can be satisfied under a mild condition if $d^k$ was generated by quasi-Newton schemes,   
		but now it is unclear whether the regularized Newton direction in (S4) satisfies \eqref{dboundxkxbar} or not. 
		
		\noindent
		{\bf (e)} Our algorithm is similar to the Newton acceleration framework of the PG method proposed in \cite{bareilles20}, which first uses
		the PG method to identify the underlying manifold substructure of \eqref{model} and then accelerates it with a Riemannian Newton method. However, our algorithm is not a special case of this framework due to the following facts. Firstly, similar to ZeroFPR and FBTN, the framework in \cite{bareilles20} executes a Newton step in each iteration. As discussed in part (d), our algorithm  adaptively executes a Newton step by condition \eqref{if-else}, which avoids some unnecessary waste in second-order step. Secondly, the Riemannian Hessian was used to yield the Newton directions in \cite{bareilles20}, while a regularized one is used in our algorithm to yield the Newton directions. Thirdly, a quadratic convergence rate of the iterate sequence was established in \cite{bareilles20} by assuming that the Riemannian Hessian is positive definite at the limit point. However, under weaker conditions we show that the generated sequence is convergent and has a superlinear convergence rate; see \cref{gconverge} and \cref{lemma-newtondir}, respectively.
	\end{remark}
	
	To conduct the convergence analysis of \cref{hybrid} with $\epsilon =0$ in the next section, from now on we assume that $x^k\ne\overline{x}^k$ for all $k$ (if not, \cref{hybrid} yields an $L$-type stationary point within a finite number of steps), i.e., \cref{hybrid} generates an infinite sequence $\{x^k\}_{k\in\mathbb{N}}$. 
	%  From the iterate steps of \cref{hybrid},
	%  the sequence $\{x^k\}_{k\in\mathbb{N}}$ consists of two parts, i.e., $\{x^k\}_{k\in\mathbb{N}}=\{x^k\}_{k\in\mathcal{K}_1}\cup\{x^k\}_{k\in \mathcal{K}_2}$,
	%  where
	%   \begin{equation}\label{def_K12}
		%  	\mathcal{K}_1\!:=\big\{k\in\mathbb{N}\ |\ x^{k+1}\ {\rm is\ generated\ by\ Step \ 1}\big\}
		%  	\ \ {\rm and}\ \ \mathcal{K}_2:=\mathbb{N}\backslash \mathcal{K}_1.
		%  \end{equation}
	The following lemma shows that  the sequences $\{x^k\}_{k\in\mathbb{N}}$ and $\{\overline{x}^k\}_{k\in\mathbb{N}}$ are bounded, and the sequence $\{\overline{\mu}_k\}_{k\in\mathbb{N}}$ is upper bounded. The latter will be used to derive a uniform lower bound for $|\overline{x}^k|_{\min}$; see \cref{prop-xbark} (i) later. 
	%  the sequence $\{x^k\}_{k\in\mathbb{N}}$ generated by \cref{hybrid} is well defined.
	%----------------------------------------------------------------------------------
	
	\begin{lemma}\label{well-defineness}
		The following assertions hold for $\{x^k\}_{k\in\mathbb{N}}, \{\overline{x}^k\}_{k\in\mathbb{N}}$ and $\{\overline{\mu}_k\}_{k\in\mathbb{N}}$.
		\begin{itemize}
			\item[(i)] The sequence $\{F(x^k)\}_{k\in\mathbb{N}}$ is nonincreasing and convergent, and consequently, 
			$\{x^k\}_{k\in\mathbb{N}}\subseteq \mathcal{L}_{F}(x^0)\!:=\!\{x \in \mathbb{R}^n\,|\,F(x)\le F(x^0)\}$ and $\{\overline{x}^k\}_{k\in\mathbb{N}}\subseteq \mathcal{L}_{F}(x^0)$.

			\item[(ii)] $\{x^k\}_{k\in\mathbb{N}}$ and $\{\overline{x}^k\}_{k\in\mathbb{N}}$ are bounded, the cluster point set of $\{x^k\}_{k\in\mathbb{N}}$, denoted by $\Omega(x^0)$, is nonempty and compact, and $F$  is constant on $\Omega(x^0)$. 	
			
			\item[(iii)] For all $k\in\mathbb{N}$, $\overline{\mu}_k<\widetilde{L}:=\max\{\mu_{\rm max}+1,\widetilde{\tau}(2\widehat{L}+\widetilde{\alpha})\}$, where
			$\widehat{L}$ is the Lipschitz constant of $\nabla\psi$ on the set $\mathcal{L}_{F}(x^0)\!+\!\overline{\tau}{\bf B}$ with
			$\overline{\tau}:=\frac{\tau_0+ \sqrt{\tau_0^2+ 2\widetilde{c}_{\!f}\mu_{\min}}}{\mu_{\min}}$. Here, $\tau_0:=\max_{x\in\mathcal{L}_F(x^0)} \|\nabla \psi(x)\|$ and
			$\widetilde{c}_f =F(x^0)-c_f$.
		\end{itemize}
	\end{lemma}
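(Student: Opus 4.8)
The plan is to handle the three parts in turn, the first two by elementary descent bookkeeping and the third by a quadratic-inequality estimate. For part~(i), I would establish $F(x^{k+1})\le F(x^k)$ for every $k$, distinguishing the two cases in the decomposition $\mathbb{N}=\mathcal{K}_1\cup\mathcal{K}_2$. If $k\in\mathcal{K}_1$ then $x^{k+1}=\overline{x}^k$ and the stopping rule of \cref{gist} gives $F(\overline{x}^k)\le F(x^k)-(\widetilde{\alpha}/2)\|\overline{x}^k-x^k\|^2\le F(x^k)$. If $k\in\mathcal{K}_2$ then \eqref{if-else} held, so $x^k\ne 0$, and since $d^k_{S_k^c}=0=x^k_{S_k^c}$ the iterate $x^{k+1}=x^k+\alpha_kd^k$ remains supported in $S_k={\rm supp}(x^k)$, whence $F(x^{k+1})=F_{S_k}(u^k+\alpha_kd^k_{S_k})$ and $F(x^k)=F_{S_k}(u^k)$; the Armijo rule \eqref{ls-NT} together with the descent property \eqref{descent-direction} then yields $F(x^{k+1})\le F_{S_k}(u^k)+\varrho\alpha_k\langle\nabla F_{S_k}(u^k),d^k_{S_k}\rangle<F_{S_k}(u^k)=F(x^k)$. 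Thus $\{F(x^k)\}$ is nonincreasing, and since $F(x)=f(Ax)+\lambda\|x\|_q^q\ge c_f>-\infty$ it is bounded below, hence convergent. In particular $F(x^k)\le F(x^0)$ and $F(\overline{x}^k)\le F(x^k)\le F(x^0)$, giving $\{x^k\},\{\overline{x}^k\}\subseteq\mathcal{L}_F(x^0)$.

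For part~(ii), I would first note that $\mathcal{L}_F(x^0)$ is compact: it is closed by continuity of $F$, and bounded because $F(x)\le F(x^0)$ forces $\lambda\sum_{i}|x_i|^q\le F(x^0)-c_f$, hence $\max_i|x_i|\le((F(x^0)-c_f)/\lambda)^{1/q}$. By part~(i) both sequences lie in this compact set, so they are bounded and $\Omega(x^0)\ne\emptyset$; moreover $\Omega(x^0)$ is closed, being the set of cluster points of a sequence, and is contained in $\mathcal{L}_F(x^0)$, so it is compact. Finally, if $x^*,\widetilde{x}^*\in\Omega(x^0)$ are limits of subsequences $\{x^{k_j}\}$ and $\{x^{l_j}\}$, then continuity of $F$ and convergence of $\{F(x^k)\}$ give $F(x^*)=\lim_kF(x^k)=F(\widetilde{x}^*)$, so $F$ is constant on $\Omega(x^0)$.

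Part~(iii) is the crux. Fix $k$ and recall $\widetilde{h}_{\mu_k,x^k}$ and its sublevel set $\mathcal{L}_{\widetilde{h}_{\mu_k,x^k}}=\{z\,|\,\widetilde{h}_{\mu_k,x^k}(z)\le\lambda g(x^k)\}$ from \cref{remark-Alg1}. I would show $\mathcal{L}_{\widetilde{h}_{\mu_k,x^k}}\subseteq\mathcal{L}_F(x^0)+\overline{\tau}{\bf B}$: for any such $z$, discarding the nonnegative term $\lambda g(z)$ and using $\lambda g(x^k)=\lambda\|x^k\|_q^q\le F(x^0)-c_f=\widetilde{c}_f$ (valid since $x^k\in\mathcal{L}_F(x^0)$) gives $\langle\nabla\psi(x^k),z-x^k\rangle+\tfrac{\mu_k}{2}\|z-x^k\|^2\le\widetilde{c}_f$; applying Cauchy--Schwarz and $\|\nabla\psi(x^k)\|\le\tau_0$, then solving the quadratic inequality $\tfrac{\mu_k}{2}t^2-\tau_0 t-\widetilde{c}_f\le 0$ in $t=\|z-x^k\|$ (its discriminant $\tau_0^2+2\mu_k\widetilde{c}_f$ is nonnegative because $\widetilde{c}_f\ge 0$) yields $\|z-x^k\|\le\frac{\tau_0+\sqrt{\tau_0^2+2\mu_k\widetilde{c}_f}}{\mu_k}\le\frac{\tau_0+\sqrt{\tau_0^2+2\mu_{\min}\widetilde{c}_f}}{\mu_{\min}}=\overline{\tau}$, the last step using $\mu_k\ge\mu_{\min}$ and the fact that $\mu\mapsto\frac{\tau_0+\sqrt{\tau_0^2+2\widetilde{c}_f\mu}}{\mu}$ is nonincreasing on $(0,\infty)$. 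Since $x^k\in\mathcal{L}_F(x^0)$, the inclusion follows, so the Lipschitz constant $\widehat{L}$ of $\nabla\psi$ on $\mathcal{L}_F(x^0)+\overline{\tau}{\bf B}$ is, uniformly in $k$, a valid Lipschitz constant of $\nabla\psi$ on $\mathcal{L}_{\widetilde{h}_{\mu_k,x^k}}$. Feeding this into the finite-termination analysis of \cref{remark-Alg1}, the backtracking test in \cref{gist} must hold once the trial step-size reaches $\widehat{L}+\widetilde{\alpha}$; hence either no backtracking occurs and $\overline{\mu}_k=\mu_k\le\mu_{\max}<\mu_{\max}+1$, or the last failed trial has step-size below $\widehat{L}+\widetilde{\alpha}$ and therefore $\overline{\mu}_k<\widetilde{\tau}(\widehat{L}+\widetilde{\alpha})\le\widetilde{\tau}(2\widehat{L}+\widetilde{\alpha})$. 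In either case $\overline{\mu}_k<\widetilde{L}$.

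The descent accounting in (i)--(ii) is routine; the \emph{main obstacle} is the uniform-in-$k$ inclusion $\mathcal{L}_{\widetilde{h}_{\mu_k,x^k}}\subseteq\mathcal{L}_F(x^0)+\overline{\tau}{\bf B}$ in part~(iii), i.e.\ producing a single radius $\overline{\tau}$, and hence a single Lipschitz constant $\widehat{L}$, that controls every outer iteration $k$; once this is in hand, the bound on $\overline{\mu}_k$ follows immediately from the line-search estimate already carried out in \cref{remark-Alg1}.
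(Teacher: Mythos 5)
Your proof is correct, and parts (i)--(ii) coincide with the paper's own argument (you merely verify directly the compactness of $\mathcal{L}_F(x^0)$ and of $\Omega(x^0)$ where the paper invokes coercivity of $g$ and cites the standard argument of Bolte et al.). The genuine difference is in part (iii). Both you and the paper obtain the radius $\overline{\tau}$ from the same computation---the prox inequality, Cauchy--Schwarz, nonnegativity of $g$, the bound $\lambda g(x^k)\le F(x^0)-c_f$, and the resulting quadratic inequality in the distance---but you apply it to \emph{every} point of the sublevel set $\mathcal{L}_{\widetilde{h}_{\mu_k,x^k}}$, deduce the uniform inclusion $\mathcal{L}_{\widetilde{h}_{\mu_k,x^k}}\subseteq\mathcal{L}_F(x^0)+\overline{\tau}{\bf B}$, and then rerun the descent-lemma termination argument of \cref{remark-Alg1} with the single constant $\widehat{L}$ in place of $L_{x^k}$, so that a failed trial forces $\overline{\mu}_k/\widetilde{\tau}<\widehat{L}+\widetilde{\alpha}$. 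The paper instead works only with the last failed trial point $\widehat{x}^k={\rm prox}_{\widehat{\mu}_k^{-1}(\lambda g)}\big(x^k-\widehat{\mu}_k^{-1}\nabla\psi(x^k)\big)$ with $\widehat{\mu}_k=\overline{\mu}_k/\widetilde{\tau}$, proves $\|\widehat{x}^k-x^k\|\le\overline{\tau}$ for that point alone, and combines the prox inequality with the mean-value theorem (an intermediate point $\xi^k\in\mathcal{L}_F(x^0)+\overline{\tau}{\bf B}$) and the failed sufficient-decrease test, dividing by $\|\widehat{x}^k-x^k\|$ to get $\widehat{\mu}_k<2\widehat{L}+\widetilde{\alpha}$, i.e.\ $\overline{\mu}_k<\widetilde{\tau}(2\widehat{L}+\widetilde{\alpha})$. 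Your route reuses the already-established line-search analysis, avoids the mean-value intermediate point and the need to ensure $\widehat{x}^k\ne x^k$, and even yields the slightly sharper bound $\overline{\mu}_k<\widetilde{\tau}(\widehat{L}+\widetilde{\alpha})$, which still gives the stated $\overline{\mu}_k<\widetilde{L}$; the paper's route is more local, needing the $\overline{\tau}$-estimate only at the single failed prox point rather than on the whole sublevel set. One small point worth making explicit if you write this up: the descent lemma in \cref{remark-Alg1} is applied along the segment joining $x^k$ to the trial point, and your inclusion does cover that segment, since every point on it lies within $\overline{\tau}$ of $x^k\in\mathcal{L}_F(x^0)$, so the constant $\widehat{L}$ is indeed applicable there.
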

	\begin{proof}
		{\bf (i)}  Fix any $k\in\mathbb{N}$. When $k\in \mathcal{K}_1$, $x^{k+1}=\overline{x}^k$, and by \cref{gist}, $F(x^{k+1})\le F(x^k)$. When $k\in\mathcal{K}_2$, from \eqref{ls-NT} and  \eqref{descent-direction} it follows that 
		\[
		F_{S_k}(u^{k+1})\le F_{S_k}(u^{k})+\varrho \beta^{m_k}\langle\nabla F_{S_k}(u^k), d_{S_k}^k \rangle\le F_{S_k}(u^{k}),
		\]
		which along with $S_{k+1}\subseteq S_k$ implies that $F(x^{k+1})\le F(x^k)$. The two cases show that $\{F(x^k)\}_{k\in \mathbb{N}}$ is nonincreasing, which along with the lower boundedness of $F$ means that $\{F(x^k)\}_{k\in \mathbb{N}}$ is convergent. The nonincreasing behavior of $\{F(x^k)\}_{k\in \mathbb{N}}$, together with $F(\overline{x}^k) \leq F(x^k)$ for each $k\in\mathbb{N}$, implies that $F(\overline{x}^k) \leq F(x^k) \leq F(x^0)$ for each $k\in\mathbb{N}$, and consequently,  $\{x^k\}_{k\in\mathbb{N}}\subseteq \mathcal{L}_{F}(x^0)$ and $\{\overline{x}^k\}_{k\in\mathbb{N}}\subseteq \mathcal{L}_{F}(x^0)$. 
		
		\noindent
		{\bf(ii)} Since $g$ is coercive and $f$ is lower bounded, the level set $\mathcal{L}_{F}(x^0)$ is compact. By part (i), $\{x^k\}_{k\in\mathbb{N}}$ and $\{\overline{x}^k\}_{k\in\mathbb{N}}$ are bounded, so the set $\Omega(x^0)$ is nonempty. Using the same arguments as in \cite[Lemma 5 (iii)]{Bolte14} yields the compactness of $\Omega(x^0)$. Pick any $x^*\in\Omega(x^0)$. There exists a subsequence $\{x^{k_j}\}_{j\in\mathbb{N}}$ such that $\lim_{j\to\infty}x^{k_j}=x^*$. By the continuity of $F$ and the convergence of $\{F(x^k)\}_{k\in\mathbb{N}}$, we have  $F(x^*)=\lim_{j\to\infty}F(x^{k_j})=F^*$, where $F^*$ is the limit of $\{F(x^k)\}_{k\in\mathbb{N}}$. This means that $F$ is constant on the set $\Omega(x^0)$. 	
		
		\noindent
		{\bf(iii)} Define $K\!:=\{k\in\mathbb{N}\ |\ \overline{\mu}_k>\mu_k \}$. If $K$ is empty, the desired result holds because $\overline{\mu}_k = \mu_k\le\mu_{\rm max}<\widetilde{L}$ for all $k\in\mathbb{N}$, so we assume that $K\ne\emptyset$. We first argue that 
		\begin{equation}\label{xbound}
			\|\widehat{x}^k - x^k\| \leq \overline{\tau}
			\ \ {\rm for\ each}\ k\in K.
		\end{equation}
		To this end, write $\widehat{\mu}_k :=\overline{\mu}_k/\widetilde{\tau}$ and $\widehat{x}^k:= {\rm prox}_{\widehat{\mu}_k^{-1}(\lambda g)}(x^k \!-\!\widehat{\mu}_k^{-1}\nabla \psi(x^k))$ for each $k\in K$. Since $\widehat{\mu}_k<\overline{\mu}_k$, by \cref{gist} we have  
		$F(\widehat{x}^k)>F(x^k)-\frac{\widetilde{\alpha}}{2}\|\widehat{x}^k\!-\!x^k\|^2$, which implies that $\widehat{x}^k \neq x^k$ for each $k\in K$. For each $k\in K$, from the definition of $\widehat{x}^k$, we have
		\begin{equation}\label{lip-eq1}
			\langle \nabla \psi(x^k), \widehat{x}^k - x^k\rangle + \frac{\widehat{\mu}_k}{2}\|\widehat{x}^k - x^k\|^2 +\lambda g(\widehat{x}^k) -\lambda g(x^k)\leq 0.
		\end{equation}
		By using Cauchy-Schwarz inequality and the nonnegativity of $g$, 
		it follows that 
		\begin{align*}
			&\frac{\widehat{\mu}_k}{2} \|\widehat{x}^k - x^k\|^2  \leq \| \nabla \psi(x^k)\| \|\widehat{x}^k - x^k\| + \lambda g(x^k) - \lambda g(\widehat{x}^k) \\
			&\le\| \nabla \psi(x^k)\| \|\widehat{x}^k - x^k\| + F(x^k) - \psi(x^k)\\
			&\leq \| \nabla \psi(x^k)\| \|\widehat{x}^k - x^k\| + F(x^0) - c_f
			\le\tau_0\|\widehat{x}^k - x^k\| +\widetilde{c}_f,
		\end{align*}
		where the third inequality is due to  $F(x^k)\le F(x^0)$ and $\psi(x^k)\ge c_{f}$, and the last one is by the definitions of $\tau_0$ and $\widetilde{c}_{\!f}$. For each $k\in K$, since $\widehat{\mu}_k\geq \mu_k\geq \mu_{\min}$, from the last inequality, $\frac{\mu_{\min}}{2} \|\widehat{x}^k-x^k\|^2-\tau_0\|\widehat{x}^k-x^k\|-\widetilde{c}_f \leq 0$. This, by the definition of $\overline{\tau}$, implies that inequality \eqref{xbound} holds. Now for each $k\in K$, by the mean-value theorem, there exists $\xi^k$ on the line segment connecting $x^k$ and $\widehat{x}^k$ such that $F(\widehat{x}^k) - F(x^k)=\langle \nabla \psi(\xi^k), \widehat{x}^k - x^k\rangle + \lambda g(\widehat{x}^k) -\lambda g(x^k)$. 
		Substituting this equality into \eqref{lip-eq1} and using 
		$F(\widehat{x}^k) - F(x^k) > -\frac{\widetilde{\alpha}}{2}
		\|x^k\!-\!\widehat{x}^k\|^2 $ yields that
		\begin{align*}
			\frac{\widehat{\mu}_k-\widetilde{\alpha}}{2} \|x^k -\widehat{x}^k\|^2
			& <\langle \nabla \psi(\xi^k)-\nabla \psi(x^k),
			\widehat{x}^k-x^k\rangle\\
			& \le\| \nabla \psi(x^k) - \nabla \psi(\xi^k)\| \| \widehat{x}^k - x^k\|.
		\end{align*}
		From part (i) and \eqref{xbound}, $\{x^k\}_{k\in K} \subseteq\mathcal{L}_{F}(x^0)$ and 
		$\{\widehat{x}^k\}_{k\in K}\subseteq\mathcal{L}_{F}(x^0)\!+\!\overline{\tau}{\bf B}$. 
		Hence, $\{\xi^k\}_{k\in K} \subseteq
		\mathcal{L}_{F}(x^0)\!+\!\overline{\tau}{\bf B}$. From the last inequality,
		for each $k\in K$,
		\[
		\frac{\widehat{\mu}_k - \widetilde{\alpha}}{2} \|x^k - \widehat{x}^k\|
		< \| \nabla \psi(x^k) - \nabla \psi(\xi^k)\| \leq \widehat{L}\|x^k - \xi^k\| \leq \widehat{L}\|x^k - \widehat{x}^k\|.
		\]
		Thus, $\widehat{\mu}_k < 2\widehat{L}+\widetilde{\alpha}$ and $\overline{\mu}_k < \widetilde{\tau}(2\widehat{L}+\widetilde{\alpha})$
		for each $k\in K$. The proof is completed.
	\end{proof} 	
	
	For any given $\gamma>0,s\in\mathbb{R}$, define a real-valued function
	\begin{equation}\label{hgams}
		h_{\gamma,s}(t):=\frac{\gamma}{2}(t-s)^2+\lambda|t|^q\ \ {\rm for}\ t\in\mathbb{R}.
	\end{equation}
	It is easy to see that $t=0$ is always a local minimizer of $h_{\gamma,s}$ and that the absolute value of another possible local minimizer is greater than $\overline{\nu}$, where $\overline{\nu}:=\big(\frac{\lambda q(1-q)}{\gamma}\big)^{\frac{1}{2-q}}$.
	In next lemma, we will establish the existence of a uniform lower bound $\varpi$ of $h''_{\gamma,s}$ at its nonzero local minimizer for any $\gamma > 0$ and $s \in \mathbb{R}$. We will show that the existence of such $\varpi$ will ultimately lead to the validity of the second condition of \cref{if-else} for some $k$ in any large enough interval and hence, together with the validity of the first condition of \cref{if-else}, the infinite cardinality of ${\cal K}_2$. Indeed, if for all the integers $k$ in any large enough interval, $x^{k+1}$ is produced by Step 1, then the sufficient decrease property in (S1) of Step 1 implies that
	$$F(x^k) - F(x^{k+1}) \geq \frac{\widetilde{\alpha}}{2} \|x^k - x^{k+1}\|^2 \ (\mbox{with } x^{k+1} = \overline{x}^k).$$
	Summing this up for all such integers, it follows from the lower boundedness of $F$ that $\sum \|x^k - x^{k+1}\|^2$ is bounded. Thus, for some integer $k$, $\|x^k - x^{k+1}\|$ should be sufficiently small.
	By using an integral mean-value theorem, $|x^k|_{\min}^{q-2} - |\overline{x}^k|_{\min}^{q-2}$ is  bounded by $\|x^k - \bar{x}^k\|$. Therefore $|x^k|_{\min}^{q-2} - |\overline{x}^k|_{\min}^{q-2}$ should be sufficiently small. If so, it is true that $\frac{\varpi}{2} + \lambda q(q-1) ( |x^k|_{\min}^{q-2} - |\overline{x}^k|_{\min}^{q-2} ) \geq 0$, which implies that the second condition of \cref{if-else} holds for some integer $k$.

	\begin{lemma}\label{twice-derive}
		For any given $0<\upsilon<M<\infty$, there exists a constant $\varpi\!>0$ such that
		for any $\gamma>0$ and $s\in \mathbb{R}$ with $\overline{t}(\gamma,s) \in \mathop{\arg\min}_{t\in\mathbb{R}} h_{\gamma,s}(t)$ and $|\overline{t}(\gamma,s)|\in [\upsilon,M]$,
		\[
		h_{\gamma,s}''(\overline{t}(\gamma,s))=\gamma+\lambda q(q\!-\!1)|\overline{t}(\gamma,s)|^{q-2}\geq \varpi.
		\]
	\end{lemma}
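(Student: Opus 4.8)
The plan is to exploit two optimality facts about $\overline{t}:=\overline{t}(\gamma,s)$ — first-order stationarity at $\overline{t}$ and the \emph{global} minimality of $\overline{t}$ tested against the competing point $t=0$ — and then combine them with the bound $|\overline{t}|\le M$. First I would reduce to $\overline{t}>0$: since $h_{\gamma,-s}(-t)=h_{\gamma,s}(t)$ and, for $t\ne0$, $h_{\gamma,s}''(t)=\gamma+\lambda q(q-1)|t|^{q-2}$ depends only on $|t|$, it suffices to treat this case. Note that the hypothesis $|\overline{t}|\in[\upsilon,M]$ with $\upsilon>0$ guarantees $\overline{t}\ne0$, so $h_{\gamma,s}$ is smooth at $\overline{t}$ and $h_{\gamma,s}'(\overline{t})=0$, that is, $\gamma(\overline{t}-s)+\lambda q\,\overline{t}^{\,q-1}=0$.

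Next I would use global minimality, $h_{\gamma,s}(\overline{t})\le h_{\gamma,s}(0)=\tfrac{\gamma}{2}s^2$, which after expanding and dividing by $\overline{t}>0$ reads $\tfrac{\gamma}{2}\overline{t}-\gamma s+\lambda\overline{t}^{\,q-1}\le0$. Substituting $\gamma s=\gamma\overline{t}+\lambda q\,\overline{t}^{\,q-1}$ from the stationarity relation and simplifying yields the key estimate
\[
\lambda(1-q)\,\overline{t}^{\,q-2}\le \tfrac{\gamma}{2}.
\]
Since $q-2<0$ and $0<\overline{t}\le M$, the left-hand side is at least $\lambda(1-q)M^{q-2}$, so this estimate also forces $\gamma\ge 2\lambda(1-q)M^{q-2}=:\gamma_0>0$; thus the constraint $|\overline{t}|\le M$ automatically keeps $\gamma$ bounded away from $0$.

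Finally I would conclude by writing $h_{\gamma,s}''(\overline{t})=\gamma-\lambda q(1-q)\overline{t}^{\,q-2}=\gamma-q\big[\lambda(1-q)\overline{t}^{\,q-2}\big]\ge\gamma-\tfrac{q}{2}\gamma=\tfrac{2-q}{2}\gamma\ge\tfrac{2-q}{2}\gamma_0=:\varpi>0$, which is the asserted uniform lower bound (depending only on $\lambda$, $q$ and $M$). I do not expect a genuine obstacle here: the only point requiring care is that the second-order necessary condition at $\overline{t}$ by itself gives merely $h_{\gamma,s}''(\overline{t})\ge0$, so one really must use \emph{global} optimality of $\overline{t}$ (the comparison with the rival local minimizer $t=0$) to upgrade this to a strictly positive, $s$-independent bound, and then use $\overline{t}\le M$ to control $\gamma$ from below; everything else is elementary algebra.
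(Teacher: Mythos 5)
Your proof is correct, and it takes a genuinely different and more elementary route than the paper's. The paper argues by contradiction: it supposes sequences $(\gamma_k,s_k)$ with $h''_{\gamma_k,s_k}(\overline{t}_k)$ below a small threshold, uses the uniform lower bound $|h'''_{\gamma_k,s_k}(t)|\ge\kappa$ on $(0,M]$ together with repeated integral mean-value arguments to locate auxiliary points $\overline{t}_k-\delta$ and $\tilde t_k$, and finally shows $h_{\gamma_k,s_k}(\overline{t}_k)-h_{\gamma_k,s_k}(0)>0$, contradicting global minimality. You instead combine the two optimality facts directly: stationarity $\gamma s=\gamma\overline{t}+\lambda q\,\overline{t}^{\,q-1}$ and the global comparison $h_{\gamma,s}(\overline{t})\le h_{\gamma,s}(0)$ give, after dividing by $\overline{t}>0$ and substituting, the clean inequality $\lambda(1-q)\overline{t}^{\,q-2}\le\gamma/2$; this immediately yields $h''_{\gamma,s}(\overline{t})\ge\frac{2-q}{2}\gamma$ and, via $\overline{t}\le M$, the explicit bound $\varpi=(2-q)\lambda(1-q)M^{q-2}$ (the symmetry reduction $h_{\gamma,-s}(-t)=h_{\gamma,s}(t)$ handling $\overline{t}<0$ is fine, and the hypothesis $|\overline{t}|\ge\upsilon$ is only needed to guarantee smoothness at $\overline{t}$). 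Both arguments hinge on the same essential ingredient — global, not merely local, optimality of $\overline{t}$ against the rival minimizer $t=0$ — but yours is constructive: it produces an explicit $\varpi$ depending only on $\lambda$, $q$ and $M$ (and independent of $\upsilon$), and it is sharp, since at the tie point where the nonzero minimizer and $0$ have equal objective value one checks $h''_{\gamma,s}(\overline{t})=\frac{2-q}{2}\gamma$ exactly. The paper's compactness-free contradiction argument buys nothing extra here beyond what your direct computation gives, so your version is a genuine simplification; the only thing to state explicitly when writing it up is that $q\in(0,1)$, so $1-q>0$ and $2-q>0$, which is what makes the final constant positive.
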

	\begin{proof}
		Suppose that the conclusion does not hold. Then, there exist sequences $\{\gamma_{k}\}\subset\mathbb{R}_{++}$ and $\{s_{k}\}\subset\mathbb{R}$ with $|\overline{t}(\gamma_{k},s_{k})|\in[\upsilon, M]$ such that $h_{\gamma_k,s_k}''(\overline{t}(\gamma_{k},s_{k}))\le\frac{1}{k}$ for all $k\in\mathbb{N}$. For each $k\in\mathbb{N}$, write $\overline{t}_{k}:=\overline{t}(\gamma_{k},s_{k})$ and $\vartheta_{k}:=h_{\gamma_k,s_k}$. Clearly, there exists $\overline{k}\in\mathbb{N}$ such that for all $k>\overline{k}$, $\vartheta_{k}''(\overline{t}_{k})<\frac{\kappa\upsilon}{10}:=\varepsilon$,  where $\kappa:=\lambda q(q\!-\!1)(q\!-\!2)M^{q-3}$. By the expression of $\vartheta_{k}$, for any $t$ with $|t|\in(0,M]$, the following inequality holds:
		\begin{equation}\label{three-derive}
			|\vartheta_{k}'''(t)|=\lambda q(q\!-\!1)(q\!-\!2) |t|^{q-3}\ge\kappa.
		\end{equation}
		Fix any $k>\overline{k}$. We proceed the arguments by	$\overline{t}_{k}\in[\upsilon,M]$ and $\overline{t}_{k}\in[-M,-\upsilon]$.
		
		\noindent
		{\bf Case 1: $\overline{t}_{k}\in[\upsilon,M]$.} Since $\vartheta_{k}''(\overline{t}_{k})<\varepsilon$	and $\vartheta_{k}'''(t)\!>\kappa$ for $t\in(0,M]$, by the integral mean-value theorem,  $\vartheta_{k}''(\overline{t}_{k})>\vartheta_{k}''(\overline{t}_{k}\!-\!\frac{\varepsilon}{\kappa})+\varepsilon$, which by $\vartheta_{k}''(\overline{t}_{k})<\varepsilon$ implies that $\vartheta_{k}''(\overline{t}_{k}\!-\!\frac{\varepsilon}{\kappa})<0$. Together with 
		$\vartheta_{k}''(\overline{t}_{k}) > 0$ (see \cite[Lemma 14]{Hu17}), there exists $0<\delta<\frac{\varepsilon}{\kappa}$ such that $\vartheta_{k}''(\overline{t}_{k}\!-\!\delta)\!=0$. Recall that  	$\vartheta_{k}'''(t)\!>\kappa > 0$ for all $t\in(0,M]$. Then,
		\begin{equation}\label{fact1}
			\vartheta_{k}''(t)<0\ \ {\rm for}\ t\in(0,\overline{t}_{k}\!-\!\delta)
			\ \ {\rm and}\ \ \vartheta_{k}''(t) > 0\ \ {\rm for} \ t\in (\overline{t}_{k}\!-\!\delta,M].
		\end{equation}
		Note that $\vartheta_{k}'(\overline{t}_{k})=0$. This, along with the second inequality in \cref{fact1}, implies that $\vartheta_{k}'(\overline{t}_{k}\!-\!\delta) < 0$. Also, since $0<\vartheta_{k}''(t)<\varepsilon$ for all $t\in(\overline{t}_{k}\!-\!\delta,\overline{t}_{k})$, from the integral mean-value theorem, $\vartheta_{k}'(\overline{t}_{k}\!-\!\delta)
		>\vartheta_{k}'(\overline{t}_{k})-\varepsilon\delta =-\varepsilon \delta$, and then $\vartheta_{k}'(\overline{t}_{k}\!-\!\delta)\in(-\varepsilon \delta, 0)$.
		Next we argue that there exists a point $\tilde{t}_{k}\in(\overline{t}_{k}\!-\!\delta \!-\!\sqrt{{2\varepsilon \delta}/{\kappa}},\overline{t}_{k}\!-\!\delta)$ such that
		$\vartheta_{k}'(\tilde{t}_{k}) = 0$, which along with the first inequality in  \cref{fact1} implies that
		\begin{equation}\label{fact4}
			\vartheta_{k}'(t) > 0\ {\rm for} \ t\in(0,\tilde{t}_{k})\ \ {\rm and}\ \
			\vartheta_{k}'(t) < 0 \ {\rm for} \ t\in (\tilde{t}_{k}, \overline{t}_{k}-\delta).
		\end{equation}
		Indeed, for any $t\in(0,\overline{t}_{k}\!-\!\delta)$, using $\vartheta_{k}''(\overline{t}_{k}\!-\!\delta) = 0$ and inequality \cref{three-derive} yields that 
		\begin{align*}
			-\varepsilon \delta<\vartheta_{k}'(\overline{t}_{k}\!-\!\delta)
			&=\vartheta_{k}'(t) +\int_{t}^{\overline{t}_{k}\!-\!\delta}\vartheta_{k}''(s)ds
			=\vartheta_{k}'(t) + \int_{t}^{\overline{t}_{k}\!-\!\delta}
			\left[\vartheta_{k}''(s) -\vartheta_{k}''(\overline{t}_{k}\!-\!\delta)\right]ds\\
			&\le\vartheta_{k}'(t) +\int_{t}^{\overline{t}_{k}\!-\!\delta}\kappa (s\!-\!\overline{t}_{k}+\delta)ds
			=\vartheta_{k}'(t)-\frac{\kappa}{2} (t\!-\!\overline{t}_{k}+\delta)^2,
		\end{align*}
		which implies that $\vartheta_{k}'(t)>0$ for all $ t\le\overline{t}_{k}\!-\!\delta \!-\!\sqrt{{2\varepsilon \delta}/{\kappa}}$. Along with $\vartheta_{k}'(\overline{t}_{k}\!-\!\delta) < 0$, there exists $\tilde{t}_{k}\in (\overline{t}_{k}\!-\!\delta\!-\!\sqrt{\frac{2\varepsilon\delta}{\kappa}},\overline{t}_{k}\!-\!\delta)$ such that $\vartheta_{k}'(\tilde{t}_{k}) =0$. 
		
		From \cref{fact1} we deduce that $\vartheta_{k}'$ is decreasing in $(\tilde{t}_{k}, \overline{t}_{k}\!-\!\delta)$ and is increasing in $(\overline{t}_{k}\!-\!\delta,\overline{t}_{k})$, which means that
		$\vartheta_{k}'(t)\ge\vartheta_{k}'(\overline{t}_{k}-\delta)>-\varepsilon \delta$
		for all $t\in(\tilde{t}_{k},\overline{t}_{k})$. Then,
		\begin{align}\label{q1-temp}
			\vartheta_{k}(\overline{t}_{k})-\vartheta_{k}(\tilde{t}_{k})
			&= \int^{\overline{t}_{k}}_{\tilde{t}_{k}}\!\vartheta_{k}'(s)ds
			>-\varepsilon\delta(\overline{t}_{k} - \tilde{t}_{k})
			>-\varepsilon \delta\Big(\delta\!+\!\sqrt{\frac{2\varepsilon \delta}{\kappa}}\Big) \\
			&>-\Big(\frac{\varepsilon^3}{\kappa^2} + \sqrt{2}\frac{\varepsilon^3}{\kappa^2}\Big)
			>- 3\frac{\varepsilon^3}{\kappa^2} =-\frac{3\kappa}{1000}\upsilon^3,\nonumber
		\end{align}
		where the third inequality is due to $0<\delta<\frac{\varepsilon}{\kappa}$.
		On the other hand, we have
		\begin{align}\label{q2-temp}
			\vartheta_{k}(\tilde{t}_{k})-& \vartheta_{k}(0)  %=\int_{0}^{\tilde{t}_{k_j}}\vartheta_{k_j}'(s)ds
			= \int_{0}^{\tilde{t}_{k}}\!\int_{\tilde{t}_{k}}^s \vartheta_{k}''(\tau)d\tau ds
			=\int_{0}^{\tilde{t}_{k}}\!\int_{\tilde{t}_{k}}^s
			[\vartheta_{k}''(\tau)-\vartheta_{k}''(\overline{t}_{k}\!-\!\delta)]d\tau ds\nonumber\\
			& \geq \int_{0}^{\tilde{t}_{k}} \int_{\tilde{t}_{k}}^s \kappa (\tau-\overline{t}_{k}+\delta) d\tau ds
			=\int_{0}^{\tilde{t}_{k}} \frac{\kappa}{2}s^2 -\frac{\kappa}{2}\tilde{t}_{k}^2
			+\kappa(\overline{t}_{k}\!-\!\delta)(\tilde{t}_{k}\!-\!s) ds \nonumber\\
			&= \frac{\kappa}{6}\tilde{t}_{k}^3 -\frac{\kappa}{2}\tilde{t}_{k}^3
			+ \frac{\kappa\tilde{t}_{k}^2}{2}(\overline{t}_{k}\!-\!\delta) \geq \frac{\kappa}{6}\tilde{t}_{k}^3 \geq
			\frac{\kappa}{6} \Big(\overline{t}_{k}\!-\!\delta\!-\!\sqrt{\frac{2\varepsilon \delta}{\kappa}}\Big)^3
			\geq \frac{\kappa}{6} \Big(\upsilon\!-\!\frac{3\varepsilon}{\kappa}\Big)^3,
		\end{align}
		where the first equality is due to $\vartheta_{k}'(\tilde{t}_{k}) =0$,
		the second one is using $\vartheta_{k}''(\overline{t}_{k}\!-\!\delta)=0$,
		the first inequality is using \cref{three-derive} and the last inequality
		is due to $0<\delta<\frac{\varepsilon}{\kappa}$ and $\overline{t}_{k}\ge\upsilon$.
		Thus, from \cref{q1-temp} and \cref{q2-temp} and $\varepsilon:=\frac{\kappa \upsilon}{10}$, we have	$\vartheta_{k}(\overline{t}_{k})-\vartheta_{k}(0)
		>\frac{465\kappa}{6000}\upsilon^3>0$,
		contradicting that $\overline{t}_{k}$ is a global minimizer
		of $\vartheta_{k}=h_{\gamma_{k},s_{k}}$. The conclusion then holds.
		
		\noindent
		{\bf Case 2:} $\overline{t}_{k}\in[-M,-\upsilon]$. By using the similar arguments to those for Case 1, one can verify that the conclusion holds. Here, the details are omitted.
	\end{proof} 	
	
	To provide a sufficient condition for the switching condition \cref{if-else}, we introduce the following notation that will be used in the subsequent analysis:
	\[
	\overline{S}_k\!:={\rm supp}(\overline{x}^k)\ \ {\rm and}
	\ \ \overline{u}^k\!:=\overline{x}_{\overline{S}_k}^k
	\ \ {\rm for\ each}\ k\in\mathbb{N}.
	\]
	\vspace{-0.6cm}
	\begin{lemma}\label{prop-xbark}
		Let $\{x^k\}_{k\in\mathbb{N}}$ and $\{\overline{x}^k\}_{k\in\mathbb{N}}$ be  generated by \cref{hybrid}, and write $\nu\!:=[\widetilde{L}^{-1}\lambda q(1-q)]^{\frac{1}{2-q}}$. Then, the following statements hold.
		\begin{itemize}
			\item[(i)] $|\overline{x}^k|_{\rm min}>\nu$ for all $k\in\mathbb{N}$, and
			$|x^k|_{\rm min}>\nu$ for all $k\in\mathcal{K}_2$.
			
			\item[(ii)] $\overline{\omega}_k\ge \varpi$ for all $k\in\mathbb{N}$,
			where $\varpi$ is the one in \cref{twice-derive} with $v = \nu$ and $M = \big(\frac{F(x^0)-c_{\!f}}{\lambda}\big)^{\frac{1}{q}}$.
			
			\item[(iii)] For each $k\in\mathbb{N}$, if $|x^k|_{\min}>\frac{\nu}{2}$ and $\|x^k\!-\!\overline{x}^k\| \leq \min\big\{\frac{\nu}{3},\frac{2^{q-3}\varpi}{2\lambda q(1-q)(2-q)\nu^{q-3}}\big\}$, then condition \eqref{if-else} holds.
			%   \item[(iii)] there is $\overline{k}\in\mathbb{N}$ such that for any
			%   $k_1,k_2 \in \mathbb{N}$  with $k_2-k_1\!>\overline{k}$, $[k_1,k_2]\cap \mathcal{K}_2\ne\emptyset$. {\color{blue} Hence $\mathcal{K}_2$ is infinite and \cref{hybrid} is different from PG.}
		\end{itemize}
	\end{lemma}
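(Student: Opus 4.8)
The plan is to treat the three parts in sequence, using Lemma~\ref{well-defineness} (the uniform bound $\overline{\mu}_k<\widetilde L$ and the inclusions $x^k,\overline{x}^k\in\mathcal{L}_F(x^0)$), Lemma~\ref{prox-bound}, Lemma~\ref{twice-derive}, and the coordinatewise separability of the proximal mapping of $\lambda g$, which represents each coordinate of $\overline{x}^k$ as a global minimizer of the scalar function $h_{\overline{\mu}_k,s}$ of \eqref{hgams} for an appropriate shift $s$. For part (i), I would note that $\overline{x}^k\in{\rm prox}_{\overline{\mu}_k^{-1}(\lambda g)}(x^k-\overline{\mu}_k^{-1}\nabla\psi(x^k))$, which after dividing the proximal objective by $\lambda$ equals ${\rm prox}_{(\lambda/\overline{\mu}_k)g}(\cdot)$; Lemma~\ref{prox-bound} then gives $|\overline{x}^k|_{\min}\ge[(\lambda/\overline{\mu}_k)q(1-q)]^{1/(2-q)}$, and since $\overline{\mu}_k<\widetilde L$ this strictly exceeds $[\widetilde L^{-1}\lambda q(1-q)]^{1/(2-q)}=\nu$ (the case $\overline{x}^k=0$ being vacuous, as then $|\overline{x}^k|_{\min}=+\infty$). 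For the second assertion, if $k\in\mathcal{K}_2$ then $x^{k+1}$ is produced by Step~2, so \eqref{if-else} holds at $k$; rearranging its second inequality into $\tfrac12\overline{\mu}_k+\tfrac12\lambda q(1-q)|\overline{x}^k|_{\min}^{q-2}\ge\lambda q(1-q)|x^k|_{\min}^{q-2}$ and bounding the left side above via $\overline{\mu}_k<\widetilde L$, $|\overline{x}^k|_{\min}>\nu$, and the identity $\nu^{q-2}=\widetilde L/[\lambda q(1-q)]$ yields $|x^k|_{\min}^{q-2}<\nu^{q-2}$, hence $|x^k|_{\min}>\nu$ because $t\mapsto t^{q-2}$ is decreasing.

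For part (ii), when $\overline{x}^k=0$ we have $\overline{\omega}_k=\overline{\mu}_k\ge\mu_{\min}$, which is $\ge\varpi$ after shrinking $\varpi$ if necessary (any smaller positive constant still satisfies the conclusion of Lemma~\ref{twice-derive}). When $\overline{x}^k\ne0$, pick $i_k\in{\rm supp}(\overline{x}^k)$ realizing $|\overline{x}^k|_{\min}$; by separability $\overline{x}^k_{i_k}$ is a global minimizer of $h_{\overline{\mu}_k,s}$ for the corresponding shift $s$, part (i) gives $|\overline{x}^k_{i_k}|>\nu$, and $\overline{x}^k\in\mathcal{L}_F(x^0)$ together with $f\ge c_f$ gives $\lambda|\overline{x}^k_{i_k}|^q\le\lambda\|\overline{x}^k\|_q^q\le F(x^0)-c_f$, i.e.\ $|\overline{x}^k_{i_k}|\le M$. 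Then Lemma~\ref{twice-derive} with $\gamma=\overline{\mu}_k$ applies and yields $\overline{\omega}_k=\overline{\mu}_k+\lambda q(q-1)|\overline{x}^k_{i_k}|^{q-2}=h_{\overline{\mu}_k,s}''(\overline{x}^k_{i_k})\ge\varpi$.

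For part (iii), I would first deduce $x^k\ne0$ (otherwise $\|\overline{x}^k\|\le\nu/3$ would force $\overline{x}^k=0$ by part (i), contradicting $x^k\ne\overline{x}^k$), and then argue coordinatewise: from $\|x^k-\overline{x}^k\|_{\infty}\le\nu/3$ together with $|x^k_i|\ge|x^k|_{\min}>\nu/2$ on ${\rm supp}(x^k)$ and $|\overline{x}^k_i|\ge|\overline{x}^k|_{\min}>\nu$ on ${\rm supp}(\overline{x}^k)$ (part (i)), each pair $x^k_i,\overline{x}^k_i$ vanishes together or is nonzero with the same sign, which gives the first condition of \eqref{if-else} and $S:={\rm supp}(x^k)={\rm supp}(\overline{x}^k)\ne\emptyset$. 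For the second condition, rewrite it as $\tfrac12\overline{\omega}_k+\lambda q(q-1)\big(|x^k|_{\min}^{q-2}-|\overline{x}^k|_{\min}^{q-2}\big)\ge0$, so by part (ii) it suffices to show $\tfrac\varpi2+\lambda q(q-1)\big(|x^k|_{\min}^{q-2}-|\overline{x}^k|_{\min}^{q-2}\big)\ge0$. This is immediate when $|x^k|_{\min}\ge|\overline{x}^k|_{\min}$ (use $q-2<0$ and $q(q-1)<0$); otherwise, letting $i^*\in S$ realize $|x^k|_{\min}$ we have $|\overline{x}^k|_{\min}\le|\overline{x}^k_{i^*}|$, hence $|x^k|_{\min}^{q-2}-|\overline{x}^k|_{\min}^{q-2}\le|x^k_{i^*}|^{q-2}-|\overline{x}^k_{i^*}|^{q-2}$, and an integral mean-value estimate for $t\mapsto t^{q-2}$ on $\big[|x^k_{i^*}|,|\overline{x}^k_{i^*}|\big]$, whose left endpoint is $>\nu/2$, bounds the latter by $(2-q)(\nu/2)^{q-3}|x^k_{i^*}-\overline{x}^k_{i^*}|\le(2-q)2^{3-q}\nu^{q-3}\|x^k-\overline{x}^k\|$; the assumed bound on $\|x^k-\overline{x}^k\|$ is precisely what turns this into $\le\varpi/[2\lambda q(1-q)]$, i.e.\ $\lambda q(1-q)\big(|x^k|_{\min}^{q-2}-|\overline{x}^k|_{\min}^{q-2}\big)\le\varpi/2$, which is the claim.

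The main obstacle is in part (iii): since $|x^k|_{\min}$ and $|\overline{x}^k|_{\min}$ need not be attained at the same coordinate, they cannot be compared directly, so the estimate must be routed through the coordinate $i^*$ realizing $|x^k|_{\min}$, exploiting $|\overline{x}^k|_{\min}\le|\overline{x}^k_{i^*}|$; and matching the constant $2^{q-3}$ appearing in the hypothesis forces some care about where the weaker bound $|x^k|_{\min}>\nu/2$ (rather than $>\nu$) enters the mean-value estimate. Parts (i)--(ii) are comparatively routine once one recognizes that each nonzero coordinate of the PG iterate is a global minimizer of the scalar function $h_{\overline{\mu}_k,s}$, so that Lemmas~\ref{prox-bound} and~\ref{twice-derive} apply; the only nuisance there is the degenerate case $\overline{x}^k=0$.
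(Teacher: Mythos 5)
Your proposal is correct and follows essentially the same route as the paper's proof: part (i) via Lemma~\ref{prox-bound} together with $\overline{\mu}_k<\widetilde{L}$ from Lemma~\ref{well-defineness} and a rearrangement of the second condition in \eqref{if-else}, part (ii) via the level-set bound giving $|\overline{x}^k_i|\le M$ and the coordinatewise application of Lemma~\ref{twice-derive} to the separable proximal subproblem, and part (iii) via the sign/gap argument plus an integral mean-value estimate for $t\mapsto t^{q-2}$ with the same constants. Your routing of the comparison of $|x^k|_{\min}^{q-2}$ and $|\overline{x}^k|_{\min}^{q-2}$ through the coordinate $i^*$ (and your explicit treatment of the degenerate case $\overline{x}^k=0$) is only a cosmetic variant of the paper's use of $\big||x^k|_{\min}-|\overline{x}^k|_{\min}\big|\le\|x^k-\overline{x}^k\|$ under $S_k=\overline{S}_k$.
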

	\begin{proof}
		{\bf(i)} By using \cref{prox-bound} with $\mu=\overline{\mu}_k^{-1}\lambda$ and $y=x^k-\overline{\mu}_k^{-1}\nabla\!\psi(x^k)$ for each $k\in\mathbb{N}$ and noting that $\mu_{\min}\le\overline{\mu}_k<\widetilde{L}$ from \cref{well-defineness} (iii), we have $|\overline{x}^k|_{\rm min}>\nu$ for all $k$. To argue that $|x^k|_{\min}>\nu$ for all $k\in\mathcal{K}_2$, we only need to prove that $|x^k|_{\min}>\nu$ if $x^k$ satisfies condition \eqref{if-else}. Indeed, the second condition in \eqref{if-else} is equivalent to
		$|x^k|_{\min}^{q-2}\le\frac{\overline{\mu}_k}{2\lambda q(1\!-\!q)}+\frac{1}{2}|\overline{x}^k|_{\min}^{q-2}$,
		which by $\overline{\mu}_k<\widetilde{L}$
		and the definition of $\nu$ means that
		\begin{equation*}%\label{xkmin}
			|x^k|_{\min}^{q-2}<\frac{\widetilde{L}}{2\lambda q(1\!-\!q)}+\frac{1}{2}\nu^{q-2}
			=\nu^{q-2},
		\end{equation*}
		where the equality is using the expression of $\nu$. Thus,  $|x^k|_{\min}>\nu$ for all $k\in\mathcal{K}_2$.
		
		\noindent
		{\bf(ii)} 
		%   {\color{blue} We first show that for all $k\in\mathbb{N}$, $\overline{\omega}_k\ge \varpi$, where $\varpi$ is the same as in \cref{twice-derive}.} 
		From Algorithm \ref{gist}, $F(\overline{x}^k)\leq F(x^k)$ for each $k\in\mathbb{N}$. Then,
		\[
		c_{f}+\lambda\|\overline{x}^k\|_q^q\le \psi(\overline{x}^k)+\lambda\|\overline{x}^k\|_q^q
		=F(\overline{x}^k)\le F(x^{k})\le F(x^0),
		\]
		which implies that $\|\overline{x}^k\|_q^q\le\lambda^{-1}(F(x^0)-c_{\!f})$,
		and then
		$|\overline{x}_i^k|\le\big(\frac{F(x^0)-c_{\!f}}{\lambda}\big)^{1/q}$ for each $i\in\overline{S}_k$. In addition,
		from part (i), $|\overline{x}_i^k|>\nu$ for each $i\in\overline{S}_k$. For each $k$, let $y^k\!:=x^k\!-\!\overline{\mu}_k^{-1}\nabla\!\psi(x^k)$.
		Then, $\overline{x}_i^k\in\mathop{\arg\min}_{t\in\mathbb{R}}h_{\overline{\mu}_k,y_i^k}(t)$ for each $i\in\overline{S}_k$,
		where $h_{\overline{\mu}_k,y_i^k}$ is defined by \eqref{hgams}. Now by invoking \cref{twice-derive} with $\upsilon=\nu$, $M=\big(\frac{F(x^0)-c_{\!f}}{\lambda}\big)^{\frac{1}{q}}$ and $\overline{t}(\overline{\mu}_k, y^k_i) = \overline{x}^k_i$ for all $i\in\overline{S}_k$, we obtain $\overline{\omega}_k=\overline{\mu}_k+\lambda q(q\!-\!1)|\overline{x}^k|_{\min}^{q-2}\geq \varpi$.
		%   \sout{From (S1) and \cref{well-defineness} (iii),  $\{\overline{\mu}_k\}_{k\in\mathbb{N}}\subseteq [\mu_{\min},\widetilde{L})$.}

		\noindent
		{\bf (iii)} Fix any $k\in\mathbb{N}$. We first prove that the equality in  \eqref{if-else} holds. From part (i), $|\overline{x}^k|_{\min} > \nu$, while from the given condition, $|x^k|_{\min}>\frac{\nu}{2}$. If there exists an index $i\in[n]$ such that ${\rm sign}(x_i^k) \neq {\rm sign}(\overline{x}^k_i)$,
		then $\|x^k-\overline{x}^k\|\ge|x^k_i-\overline{x}^k_i|>\frac{\nu}{2}$, which is a contradiction to  $\|x^k-\overline{x}^k\|<\nu/3$. Thus, ${\rm sign}(x^k) = {\rm sign}(\overline{x}^k)$, and hence $S_k = \overline{S}_k$.
		For the inequality in \eqref{if-else}, from part (ii), it suffices to argue that
		$\frac{\varpi}{2}+\lambda q(q\!-\!1)(|x^k|_{\min}^{q-2}-|\overline{x}^k|_{\min}^{q-2})
		\ge 0$ or $|x^k|_{\min}^{q-2}-|\overline{x}^k|_{\min}^{q-2}\le\frac{\varpi}{2\lambda q(1-q)}$. Indeed, by invoking the integral mean-value theorem,
		\begin{align*}
			& |x^k|_{\min}^{q-2}\!-\!|\overline{x}^k|_{\min}^{q-2}
			=\!\int^{|\overline{x}^k|_{\min}}_{|x^k|_{\min}} (2\!-\!q) t^{q-3}dt \\
			&\le (2\!-\!q)\big(\min\{|x^k|_{\min},|\overline{x}^k|_{\min}\}\big)^{q-3}
			\big||x^k|_{\min}\!-\!|\overline{x}^k|_{\min}\big| \\
			&<(2\!-\!q)({\nu}/{2})^{q-3}\big||x^k|_{\min}\!-\!|\overline{x}^k|_{\min}\big|\le (2\!-\!q)({\nu}/{2})^{q-3}\|x^k-\overline{x}^k\|
			\le\frac{\varpi}{2\lambda q(1-q)},
		\end{align*}
		where the second inequality is by $|\overline{x}^k|_{\min}>\nu$
		and $|x^k|_{\min}>\frac{\nu}{2}$, the third one is due to $S_k =\overline{S}_k$, and the last one is using
		$\|x^k - \overline{x}^k\|<\frac{2^{q-3}\varpi}{2\lambda q(1-q)(2-q)\nu^{q-3}}$. 
	\end{proof}
	
	From Lemma \ref{prop-xbark}, we obtain the following corollary, stating that $\mathcal{K}_2$ contains infinite indices, so HpgSRN is different from PG method. In the next section, we improve this result so that after a finite number of steps, the iterates of Algorithm \ref{hybrid} always enter into Step 2. 
	\begin{corollary}
		There exists $\overline{k}\in\mathbb{N}$ such that for any
		$k_1,k_2 \in \mathbb{N}$  with $k_2-k_1\!>\overline{k}$, $[k_1,k_2]\cap \mathcal{K}_2\ne\emptyset$, so $\mathcal{K}_2$ is an infinite set and \cref{hybrid} is different from PG method.
	\end{corollary}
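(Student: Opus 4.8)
The plan is to turn \cref{prop-xbark}(iii) into a lower bound on the proximal-gradient residuals $\|x^k-\overline{x}^k\|$ along any run of consecutive $\mathcal{K}_1$-iterates, and then to pigeonhole these residuals against the finite ``energy budget'' supplied by the sufficient decrease property of (S1). The heuristic has already been laid out before \cref{twice-derive}; here is how I would make it precise. First I set $\delta_0:=\min\big\{\frac{\nu}{3},\,\frac{2^{q-3}\varpi}{2\lambda q(1-q)(2-q)\nu^{q-3}}\big\}$, which is positive because $0<q<1$ and $\nu,\varpi>0$, and put $\overline{k}:=\big\lceil 2(F(x^0)-c_{f})/(\widetilde{\alpha}\delta_0^2)\big\rceil$. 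It then suffices to prove the implication ``$[k_1,k_2]\subseteq\mathcal{K}_1\ \Rightarrow\ k_2-k_1<\overline{k}$'', because the assertion of the corollary is precisely its contrapositive, and the remaining claims follow immediately: for every $m$ the interval $[m,m+\overline{k}+1]$ then meets $\mathcal{K}_2$, so $\mathcal{K}_2$ is unbounded, hence infinite; in particular \cref{hybrid} performs infinitely many subspace regularized Newton steps and cannot coincide with the PG method (for which $\mathcal{K}_2=\emptyset$).

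So fix $k_1<k_2$ with $[k_1,k_2]\subseteq\mathcal{K}_1$. For each $k\in[k_1+1,k_2]$ its predecessor $k-1$ also lies in $\mathcal{K}_1$, so $x^k=\overline{x}^{k-1}$ and hence, by \cref{prop-xbark}(i), $|x^k|_{\min}=|\overline{x}^{k-1}|_{\min}>\nu>\nu/2$. Moreover $k\in\mathcal{K}_1$ means that the switching condition \eqref{if-else} fails at $k$, for otherwise \cref{hybrid} would branch into Step~2 and $k$ would belong to $\mathcal{K}_2$. Combined with $|x^k|_{\min}>\nu/2$, the contrapositive of \cref{prop-xbark}(iii) then forces $\|x^k-\overline{x}^k\|>\delta_0$.

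Next, since every $k\in[k_1+1,k_2]$ belongs to $\mathcal{K}_1$, we have $x^{k+1}=\overline{x}^k$, and the exit condition of \cref{gist} invoked in (S1) gives $F(x^{k+1})\le F(x^k)-\frac{\widetilde{\alpha}}{2}\|x^k-\overline{x}^k\|^2$. Summing over $k\in[k_1+1,k_2]$, telescoping, and using $F(x^{k_1+1})\le F(x^0)$ from \cref{well-defineness}(i) together with the lower boundedness $F\ge c_{f}$,
\[
(k_2-k_1)\,\delta_0^2<\sum_{k=k_1+1}^{k_2}\|x^k-\overline{x}^k\|^2\le\frac{2}{\widetilde{\alpha}}\big(F(x^{k_1+1})-F(x^{k_2+1})\big)\le\frac{2(F(x^0)-c_{f})}{\widetilde{\alpha}},
\]
so that $k_2-k_1<2(F(x^0)-c_{f})/(\widetilde{\alpha}\delta_0^2)\le\overline{k}$, which is the desired implication.

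As for difficulties: the genuine analytic work is already done in \cref{twice-derive} (the uniform modulus $\varpi$) and \cref{prop-xbark}, so the corollary itself is light and I expect no real obstacle. The only point needing care is verifying the hypothesis $|x^k|_{\min}>\nu/2$ of \cref{prop-xbark}(iii); this is exactly why I restrict to indices $k$ in the interior $[k_1+1,k_2]$ of the $\mathcal{K}_1$-run, whose predecessor lies in $\mathcal{K}_1$ and therefore realizes $x^k$ as the proximal-gradient point $\overline{x}^{k-1}$ to which \cref{prop-xbark}(i) applies directly, while the boundary index $k_1$ (possibly produced by a Newton step) is simply dropped, at the cost of one term in the sum.
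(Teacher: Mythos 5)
Your proof is correct and follows essentially the same route as the paper's: the same constant $\delta_0$ and threshold $\overline{k}$, the same use of \cref{prop-xbark}(i) (via $x^k=\overline{x}^{k-1}$ on a $\mathcal{K}_1$-run) together with the contrapositive of \cref{prop-xbark}(iii) to force $\|x^k-\overline{x}^k\|>\delta_0$, and the same telescoping of the sufficient-decrease inequality from \cref{gist} bounded by $F(x^0)-c_f$. The only cosmetic difference is that you argue the contrapositive directly while the paper phrases it as a proof by contradiction.
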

	\begin{proof}
		Let $\delta = \min\{\frac{\nu}{3},\frac{2^{q-3}\varpi}{2\lambda q(1-q)(2-q)\nu^{q-3}}\big\}$ and $\overline{k}=\!\lceil\frac{2(F(x^0)-c_f)}{\widetilde{\alpha}\delta^2}\rceil$.
		We argue by contradiction that the result holds. If not, there must exist
		$\widehat{k}_1,\widehat{k}_2\in\mathbb{N}$ with $\widehat{k}_2 -\widehat{k}_1>\overline{k}$
		such that $[\widehat{k}_1,\widehat{k}_2]\cap \mathcal{K}_2=\emptyset$.  Clearly,
		$[\widehat{k}_1,\widehat{k}_2]\subseteq\mathcal{K}_1$. By the definition of $\mathcal{K}_1$,
		for every $k-1\in[\widehat{k}_1,\widehat{k}_2\!-\!1]$, $x^k$ is obtained by the PG step, which by \cref{prop-xbark} (i) implies that $|x^k|_{\min}>\nu$ and then
		$\|\overline{x}^{k}-x^{k}\|\geq \delta$ must hold (if not, by \cref{prop-xbark} (iii), $[\widehat{k}_1\!+\!1,\widehat{k}_2]$ would contain an index of $\mathcal{K}_2$).
		For every $k\in[\widehat{k}_1,\widehat{k}_2]\subset\mathcal{K}_1$,
		we also have $x^{k+1}=\overline{x}^k$. By \cref{gist},
		for every $k\in[\widehat{k}_1,\widehat{k}_2]$,
		$F(x^{k+1})\le F(x^k)-\frac{\widetilde{\alpha}}{2} \|\overline{x}^{k}-x^k\|^2$, and then
		\[
		\frac{2\big(F(x^{\widehat{k}_1+1})-c_f)}{\widetilde{\alpha}}
		\ge\frac{2\big(F(x^{\widehat{k}_1+1})-F(x^{\widehat{k}_2+1})\big)}
		{\widetilde{\alpha}}
		\ge\sum_{i=\widehat{k}_1+1}^{\widehat{k}_2} \|\overline{x}^{k}-x^k\|^2
		\ge(\widehat{k}_2-\widehat{k}_1)\delta^2,
		\]
		where the last inequality is due to $\|\overline{x}^{k}-x^{k}\|\geq \delta$
		for every $k\in[\widehat{k}_1\!+\!1,\widehat{k}_2]$. Together with $F(x^{\widehat{k}_1+1})\le F(x^0)$,
		we obtain $\widehat{k}_2-\widehat{k}_1\leq \frac{2(F(x^0)-c_f)}{\widetilde{\alpha}\delta^2}\le\overline{k}$,
		a contradiction to the given condition $\widehat{k}_2-\widehat{k}_1>\overline{k}$.
		The proof is then completed.
	\end{proof}
	
	%--------------------------------------------------------------------------------------
	\section{Convergence analysis}\label{sec4}
	
	In this part, we analyze the convergence rate of the objective function value sequence
	$\{F(x^k)\}_{k\in\mathbb{N}}$, and establish the global convergence of the iterate
	sequence $\{x^k\}_{k\in\mathbb{N}}$ and its superlinear convergence rate. Throughout this section, we write 
	%  {\color{blue} Throughout this section, we do not assume that   $\mathcal{K}_2$ is nonempty, and if it is nonempty, write}
	\[	
	r^k:=\nabla F_{S_k}(u^k)\ \ {\rm and}\ \ H^k:=\nabla^2 F_{S_k}(u^k)
	\ \ {\rm for\ each}\ k\in\mathcal{K}_2.
	\]
	First, we give several technical lemmas that are used for the subsequent convergence analysis. The following lemma states that the subsequences $\{r^k\}_{k\in\mathcal{K}_2}$ and $\{d^k\}_{k\in \mathcal{K}_2}$ are bounded,
	and the subsequence $\{r^k\}_{k\in\mathcal{K}_2}$ is lower bounded by $\{\|u^k-\overline{u}^k\|\}_{k\in\mathcal{K}_2}$.  The latter is crucial to control $F(x^{k+1}) - F(x^{k})$ by using $-\|x^k-\overline{x}^k\|^2$; see \cref{lemma-sequence}.
	%-------------------------------------------------------------------------------------
	\begin{lemma}\label{lemma-bound}
		Let $\{x^k\}_{k\in\mathbb{N}}$ be generated by \cref{hybrid}. The following holds.
		\begin{itemize}
			\item[(i)]  There exists a constant $r_{\!\rm max}>0$ such that $\|r^k\|\le r_{\!\rm max}$ and $\|d^k\|\le b_2^{-1}r_{\!\rm max}^{1-\sigma}$ for all $k\in\mathcal{K}_2$.
			
			\item[(ii)] For each $k\in\mathcal{K}_2$, $\|r^k\|\geq \frac{\varpi}{4}\|u^k\!-\!\overline{u}^k\|$
			where $\varpi$ is the same as in \cref{twice-derive}.
		\end{itemize}
	\end{lemma}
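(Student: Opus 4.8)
The plan is to exploit the uniform bounds from \cref{prop-xbark} together with the boundedness of $\{x^k\}_{k\in\mathbb{N}}$ from \cref{well-defineness} (ii). For part (i), recall that for $k\in\mathcal{K}_2$ we have $S_k=\mathrm{supp}(x^k)$ with $|x^k|_{\min}>\nu$ by \cref{prop-xbark} (i), and $\{x^k\}_{k\in\mathbb{N}}$ is contained in the compact set $\mathcal{L}_F(x^0)$. Hence $u^k=x^k_{S_k}$ lies in a bounded set with $|u^k|_{\min}>\nu$, so \cref{property-FS} (ii) (applied with $C$ the bounded set containing $\{x^k\}$ and $\kappa=\nu$) yields constants $\widehat c_1,\widehat c_2$ with $\|r^k\|=\|\nabla F_{S_k}(u^k)\|\le\widehat c_1$ and $\|H^k\|=\|\nabla^2 F_{S_k}(u^k)\|\le\widehat c_2$ for all $k\in\mathcal{K}_2$; set $r_{\rm max}:=\widehat c_1$. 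For the bound on $d^k$, recall $d^k_{S_k}$ solves $G^k d_{S_k}=-r^k$ with $G^k=H^k+(b_1\zeta_k+b_2\|r^k\|^\sigma)I\succ0$, so $\|d^k\|=\|d^k_{S_k}\|\le\|(G^k)^{-1}\|\,\|r^k\|\le\lambda_{\min}(G^k)^{-1}\|r^k\|$. Since $H^k+b_1\zeta_k I\succeq H^k+\zeta_k I\succeq 0$ (as $\zeta_k=[-\lambda_{\min}(H^k)]_+$ and $b_1>1$), we get $\lambda_{\min}(G^k)\ge b_2\|r^k\|^\sigma$, whence $\|d^k\|\le b_2^{-1}\|r^k\|^{1-\sigma}\le b_2^{-1}r_{\rm max}^{1-\sigma}$, using $r_{\rm max}\ge\|r^k\|$ and $1-\sigma>0$. (If $r^k=0$ then $d^k=0$ and the bound is trivial.)

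For part (ii), the idea is to compare the gradient $\nabla F_{S_k}(u^k)$ coordinatewise against the optimality of $\overline u^k$ for the separable proximal subproblem. Fix $k\in\mathcal{K}_2$. Since $x^{k+1}$ is generated by Step 2, condition \eqref{if-else} holds, so $\mathrm{sign}(x^k)=\mathrm{sign}(\overline x^k)$ and thus $\overline S_k=S_k$ and $\mathrm{supp}(\overline u^k)=S_k$. Writing $y^k:=x^k-\overline\mu_k^{-1}\nabla\psi(x^k)$, the PG step gives $\overline x^k_i\in\arg\min_{t\in\mathbb R}h_{\overline\mu_k,y^k_i}(t)$ for each $i\in S_k$, and since $\overline x^k_i\ne0$ this is an interior stationary point: $\overline\mu_k(\overline x^k_i-y^k_i)+\lambda q\,\mathrm{sign}(\overline x^k_i)|\overline x^k_i|^{q-1}=0$, i.e.
\[
\lambda q\,\mathrm{sign}(\overline x^k_i)|\overline x^k_i|^{q-1}=\overline\mu_k\bigl(y^k_i-\overline x^k_i\bigr)=\bigl[\nabla\psi(x^k)\bigr]_i-\overline\mu_k\bigl(x^k_i-\overline x^k_i\bigr).
\]
On the other hand, by \eqref{forder-derive}, $r^k_i=[\nabla F_{S_k}(u^k)]_i=[A_{S_k}^{\mathbb T}\nabla f(A_{S_k}u^k)]_i+\lambda q\,\mathrm{sign}(x^k_i)|x^k_i|^{q-1}=[\nabla\psi(x^k)]_i+\lambda q\,\mathrm{sign}(x^k_i)|x^k_i|^{q-1}$, where I use $A_{S_k}u^k=Ax^k$. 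Subtracting, and using $\mathrm{sign}(x^k_i)=\mathrm{sign}(\overline x^k_i)$,
\[
r^k_i=\overline\mu_k\bigl(x^k_i-\overline x^k_i\bigr)+\lambda q\,\mathrm{sign}(\overline x^k_i)\bigl(|x^k_i|^{q-1}-|\overline x^k_i|^{q-1}\bigr).
\]

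The remaining step is to show the right-hand side is bounded below in absolute value by $\tfrac{\varpi}{4}|x^k_i-\overline x^k_i|$ for each $i\in S_k$; summing the squares over $i\in S_k$ then gives $\|r^k\|\ge\tfrac{\varpi}{4}\|u^k-\overline u^k\|$. For this, apply the integral mean-value theorem to write $|x^k_i|^{q-1}-|\overline x^k_i|^{q-1}=(q-1)\,\mathrm{sign}(\overline x^k_i)\,c_i\,(x^k_i-\overline x^k_i)$ for some $c_i$ between $|x^k_i|^{q-2}$ and $|\overline x^k_i|^{q-2}$ — more precisely $c_i=\int_0^1|\overline x^k_i+\theta(x^k_i-\overline x^k_i)|^{q-2}d\theta$, legitimate since $x^k_i$ and $\overline x^k_i$ have the same sign and the segment avoids $0$. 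Then $r^k_i=\bigl(\overline\mu_k+\lambda q(q-1)c_i\bigr)(x^k_i-\overline x^k_i)$, and the factor $\overline\mu_k+\lambda q(q-1)c_i$ must be shown to have absolute value at least $\varpi/4$. This is where the main obstacle lies: one must relate the "averaged" quantity $c_i$ to the endpoint curvature bounds from \cref{twice-derive}. The key observations are that $\overline\mu_k+\lambda q(q-1)|\overline x^k_i|^{q-2}=\overline\omega_k\ge\varpi$ (using $|\overline x^k_i|\ge|\overline x^k|_{\min}$ and that $t\mapsto\lambda q(q-1)t^{q-2}$ is increasing, combined with \cref{prop-xbark} (ii)), while $\overline\mu_k+\lambda q(q-1)|x^k_i|^{q-2}\ge\tfrac12\overline\omega_k\ge\tfrac{\varpi}{2}$ by the inequality in \eqref{if-else} and $|x^k_i|\ge|x^k|_{\min}$. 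Since $c_i$ is a convex combination of values of $t^{q-2}$ over the segment between $|x^k_i|$ and $|\overline x^k_i|$, and $t\mapsto\overline\mu_k+\lambda q(q-1)t^{q-2}$ is monotone, its value at $c_i$ is sandwiched between its values at the two endpoints $|x^k_i|^{q-2}$ and $|\overline x^k_i|^{q-2}$ — hence bounded below by $\min\{\varpi,\tfrac{\varpi}{2}\}=\tfrac{\varpi}{2}\ge\tfrac{\varpi}{4}$. (The factor $\tfrac14$ rather than $\tfrac12$ gives slack; one may alternatively cover the case where $c_i$ falls outside the nominal interval by using $q-2<0$ and a direct estimate.) This yields $|r^k_i|\ge\tfrac{\varpi}{4}|x^k_i-\overline x^k_i|$, and the claim follows by summing over $i\in S_k$ and taking square roots, noting $u^k-\overline u^k=(x^k-\overline x^k)_{S_k}$ while $x^k_i=\overline x^k_i=0$ for $i\notin S_k$.
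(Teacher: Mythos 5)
Your proposal is correct. Part (i) is essentially the paper's argument (compactness of $\mathcal{L}_F(x^0)$, the lower bound $|x^k|_{\min}>\nu$ from \cref{prop-xbark}, \cref{property-FS} (ii), and $\lambda_{\min}(G^k)\ge b_2\|r^k\|^{\sigma}$). Part (ii), however, takes a genuinely different and more elementary route. The paper works with the multivariate separable function $h_k(u)=\sum_i h_{\overline\mu_k,v^k_i}(u_i)$, uses the global optimality of $\overline u^k$, constructs by a continuity/$\varepsilon$-enlargement argument an open convex set $\Omega_k$ containing $u^k$ and $\overline u^k$ on which $\nabla^2 h_k\succ\frac{\varpi}{4}I$, and then extracts the bound from monotonicity of the gradient of the strongly convex tilt $\widetilde h_k$. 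You instead exploit separability coordinatewise: since $\mathrm{sign}(x^k_i)=\mathrm{sign}(\overline x^k_i)$, the segment joining $x^k_i$ and $\overline x^k_i$ avoids the origin, so $r^k_i=h'_{\overline\mu_k,y^k_i}(x^k_i)-h'_{\overline\mu_k,y^k_i}(\overline x^k_i)=\bigl(\overline\mu_k+\lambda q(q-1)c_i\bigr)(x^k_i-\overline x^k_i)$ with $c_i$ an average of $|t|^{q-2}$ along the segment; because $t\mapsto\overline\mu_k+\lambda q(q-1)t^{q-2}$ is increasing in $t>0$, the curvature along the segment is at least its value at the endpoint of smaller magnitude, which is $\ge\varpi/2$ by \cref{prop-xbark} (ii) (for $|\overline x^k_i|$) and the inequality in \eqref{if-else} (for $|x^k_i|$). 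Your hedge about $c_i$ possibly ``falling outside the nominal interval'' is unnecessary: $c_i$ is an average of values of a monotone function over the segment, so it is sandwiched between the endpoint values, and the argument is airtight. This buys you a cleaner proof with no $\varepsilon$-enlargement or strong-convexity machinery and in fact the sharper constant $\varpi/2$ (hence a fortiori $\varpi/4$); the paper's formulation is more in the spirit of arguments that would survive a non-separable regularizer, but that generality is not needed here. One minor slip: in your intermediate display, $\overline\mu_k(y^k_i-\overline x^k_i)=\overline\mu_k(x^k_i-\overline x^k_i)-[\nabla\psi(x^k)]_i$, not $[\nabla\psi(x^k)]_i-\overline\mu_k(x^k_i-\overline x^k_i)$; your final expression for $r^k_i$ is nonetheless the correct one, so this is only a typo to fix.
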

	\begin{proof}
		{\bf(i)} Fix any $k\in\mathcal{K}_2$.  By \cref{remark-hybrid} (c), we know that $S_k \neq \emptyset.$ 
		%  Clearly, $S_k \neq \emptyset$.
		%  If not, ${\rm sign}(x^k)={\rm sign}(\overline{x}^k)$ implies $x^k = \overline{x}^k = 0$ and the termination condition is satisfied.
		From \cref{prop-xbark} (i), $|x_i^k|>\nu$ for all $i\in S_k$. By invoking \cref{property-FS} (ii) with $\kappa={\nu}/{2}$
		and $C=\big\{z\in\mathcal{L}_{F}(x^0)\,|\,|z_i|\ge \nu/2\ {\rm for\ all}\ i\in S_k \big\}$, there exists $r_{\!\rm max}>0$ (independent of $k$)
		such that $\|r^k\|\le r_{\!\rm max}$. Together with $\lambda_{\rm min}(G^k)\ge b_2\|r^k\|^\sigma$, it follows that
		\begin{equation}\label{dir-bound}
			\|d^k\|=\|d_{S_k}^k\| \leq \|(G^k)^{-1}\| \|r^k\|\le b_2^{-1}\|r^k\|^{1-\sigma}
			\le b_2^{-1}r_{\!\rm max}^{1-\sigma}.
		\end{equation}
		
		\noindent
		{\bf(ii)} Fix any $k\in\mathcal{K}_2$. Write
		$B_k\!:=A_{S_k}$ and $v^k\!:=u^k\!-\!\overline{\mu}_k^{-1}B_k^{\mathbb{T}}\nabla\! f(B_ku^k)$.
		Let $h_k(u)\!:=\sum_{i=1}^{|S_k|}h_{\overline{\mu}_k,v_i^k}(u_i)$ for $u\in\mathbb{R}^{|S_k|}$,
		where $h_{\overline{\mu}_k,v_i^k}$ is the function defined in \eqref{hgams}
		with $(\gamma,s)=(\overline{\mu}_k,v_i^k)$. From (\ref{if-else}),
		${\rm sign}(x^k) ={\rm sign}(\overline{x}^k)$, and then  $S_k=\overline{S}_k$. Therefore, we have $\overline{u}^k \in \mathop{\arg\min}_{u \in \mathbb{R}^{|S_k|}}h_k(u)$, whose   optimality condition is given by
		\begin{equation}\label{hk-optcond}
			0=\nabla h_k(\overline{u}^k)=\overline{\mu}_k (\overline{u}^k\!-\!v^k)
			+\lambda q {\rm sign}(\overline{u}^k) \circ |\overline{u}^k|^{q-1}.
		\end{equation}
		In addition, by combining \cref{prop-xbark} (ii) and the inequality in (\ref{if-else}), it holds that
		\begin{equation}\label{xk-posdef}
			\varpi/2\le\overline{\omega}_k/2 \le \overline{\mu}_k + \lambda q(q\!-\!1)|x^k|_{\min}^{q-2}
			=\overline{\mu}_k+\lambda q(q\!-\!1)|u^k|_{\min}^{q-2}=h_{\overline{\mu}_k,v_i^k}''(|u^k|_{\min}).
		\end{equation}	
		Define the index sets $\mathcal{I}_{1}^k\!:=\!\{i\in[|S_k|]\,|\, u_i^k\!>0\}$
		and $\mathcal{I}_{2}^k\!:=[|S_k|]\backslash \mathcal{I}_{1}^k$.
		For each $i\in[|S_k|]$, write $\widetilde{u}^k_i:={\rm sign}(u^k_i)\min\{|u^k_i|,|\overline{u}^k_i|\}$.
		Note that each $h_{\overline{\mu}_k,v_i^k}$ is smooth
		at any $t\ne 0$, and $h_{\overline{\mu}_k,v_i^k}''$ is nonincreasing at $(-\infty, 0)$ and nondecreasing at $(0,\infty)$.
		From \eqref{xk-posdef} and \cref{prop-xbark} (ii), it follows that  $h_{\overline{\mu}_k,v_i^k}''(\widetilde{u}_i^k)\ge\varpi/2$ for all $i\in[|S_k|]$.
		Consequently, there exists $\varepsilon>0$ such that for each
		$i\in\mathcal{I}^k_1$, $h_{\overline{\mu}_k,v_i^k}''(t)>\frac{\varpi}{4}$ 
		when $t\in(\widetilde{u}_i^k\!-\varepsilon,\infty)$; and for each $i\in\mathcal{I}^k_2$, $h_{\overline{\mu}_k,v_i^k}''(t)>\frac{\varpi}{4}$ when $t\in(-\infty,\widetilde{u}_i^k\!+\varepsilon)$.
		Define 
		\[
		\Omega_k\!:=\big\{u\in\mathbb{R}^{|S_k|}\,|\,u_i\!>\widetilde{u}_i^k-\varepsilon\ {\rm for}\
		i\in\mathcal{I}_1^k\ {\rm and}\ u_i<\!-\widetilde{u}_i^k +\varepsilon\ {\rm for}\ i\in \mathcal{I}_2^k\big\}.
		\]
		Then, $h_k$ is twice continuously differentiable on the convex set $\Omega_k$
		with $\nabla^2h_k(u)\succ \frac{\varpi}{4}I$ for all $u\in \Omega_k$,
		which implies that $\widetilde{h}_k(u)\!:=h_k(u)-\frac{\varpi}{8}\|u-v^k\|^2$
		is strongly convex on the set $\Omega_k$. From \eqref{hk-optcond} and the expression
		of $\widetilde{h}_k$, clearly, $\nabla\widetilde{h}_k(\overline{u}^k)=\frac{\varpi}{4}(v^k\!-\!\overline{u}^k)$.
		Let $\widehat{u}^k:=u^k+\frac{4}{\varpi}\nabla\widetilde{h}_k(u^k)$.
		By the convexity of $\widetilde{h}_k$ on $\Omega_k$ and $u^k,\overline{u}^k\in\Omega_k$, we have
		\[
		0\le\langle \nabla\widetilde{h}_k(\overline{u}^k)-\nabla\widetilde{h}_k(u^k), \overline{u}^k - u^k\rangle
		= \frac{\varpi}{4}\langle (v^k-\overline{u}^k)-(\widehat{u}^k- u^k),\overline{u}^k - u^k\rangle,
		\]
		which implies that
		$\|u^k-\overline{u}^k\|\le\|v^k-\widehat{u}^k\|=\|u^k-\overline{\mu}_k^{-1}B_k^{\mathbb{T}}\nabla f(B_ku^k)-\widehat{u}^k\|$.
		Together with $\frac{\varpi}{4}(\widehat{u}^k\!-\!u^k)=\nabla\widetilde{h}_k(u^k)
		=(\overline{\mu}_k\!-\!\frac{\varpi}{4})(u^k\!-\!v^k)+\lambda q {\rm sign}(u^k)\circ |u^k|^{q-1}$,
		it follows that
		\begin{align*}
			\|r^k\|&=\big\|B_k^{\mathbb{T}}\nabla\!f(B_ku^k)+\lambda q{\rm sign}(u^k)\circ|u^k|^{q-1}\big\| \\
			&=\big\|B_k^{\mathbb{T}}\nabla\!f(B_ku^k)-(\overline{\mu}_k-\frac{\varpi}{4})(u^k-v^k)+\frac{\varpi}{4}(u^k\!-\!\widehat{u}^k)\big\| \\				 &=\frac{\varpi}{4}\|\overline{\mu}_k^{-1}B_k^{\mathbb{T}}\nabla\!f(B_ku^k)-u^k +\widehat{u}^k\|
			\ge\frac{\varpi}{4}\|u^k - \overline{u}^k\|,
		\end{align*}
		where the third equality is by the definition of $v^k$. The proof is completed.
	\end{proof}
	%------------------------------------------------------------------------------------
	\begin{assumption}\label{ass1}
		$\nabla^2\!f$ is locally Lipschitz continuous on $\mathbb{R}^m$.
	\end{assumption}
	%------------------------------------------------------------------------------------
	
	Assumption \ref{ass1} is a common one in the convergence analysis of Newton-type methods (see, e.g., \cite{Mordu20}). By the Heine-Borel open covering theorem,
	one can show that under Assumption \ref{ass1} the Hessian $\nabla^2\!f$ is Lipschitz continuous on any compact subset of $\mathbb{R}^m$.
	We next use this fact to prove that $\{\alpha_k\}_{k\in\mathcal{K}_2}$ has a uniform lower bound,  which will be employed to establish the sufficient decrease of $\{F(x^k)\}_{k\in\mathbb{N}}$; see \cref{lemma-sequence}.
	\begin{lemma}\label{ls-Newton}
		Under \cref{ass1} there is $\underline{\alpha}>0$ such that for all $k\in\mathcal{K}_2$,
		$\alpha_k\ge\underline{\alpha}$.
	\end{lemma}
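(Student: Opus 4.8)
The idea is to run a standard Armijo analysis for the subspace Newton step in (S5), but with the third-order (Lipschitz-Hessian) estimate rather than the descent lemma. A direct use of the descent lemma would only give a step-size threshold proportional to $\lambda_{\min}(G^k)\ge b_2\|r^k\|^{\sigma}$, which may tend to $0$ as the iterates approach a critical point; the uniform lower bound is instead obtained by controlling the ratio $\theta_k/\|d_{S_k}^k\|^3$, where $\theta_k:=-\langle r^k,d_{S_k}^k\rangle=\langle G^k d_{S_k}^k,d_{S_k}^k\rangle>0$. The two facts that make this ratio uniformly bounded below are $\theta_k\ge\lambda_{\min}(G^k)\|d_{S_k}^k\|^2\ge b_2\|r^k\|^{\sigma}\|d_{S_k}^k\|^2$ (using $\lambda_{\min}(H^k)+b_1\zeta_k\ge0$, which holds because $b_1>1$) and $\|d_{S_k}^k\|\le b_2^{-1}\|r^k\|^{1-\sigma}$ from \cref{lemma-bound}(i); raising the latter to the power $\sigma/(1-\sigma)$ and substituting gives $\theta_k\ge b_2^{1/(1-\sigma)}\|d_{S_k}^k\|^{(2-\sigma)/(1-\sigma)}$, hence $\theta_k/\|d_{S_k}^k\|^3\ge b_2^{1/(1-\sigma)}\|d_{S_k}^k\|^{(2\sigma-1)/(1-\sigma)}\ge c_*$ for a constant $c_*>0$, since $\sigma\le1/2$ makes the exponent nonpositive and $\|d_{S_k}^k\|\le b_2^{-1}r_{\rm max}^{1-\sigma}=:D$ by \cref{lemma-bound}(i). (If $d_{S_k}^k=0$ for some $k\in\mathcal{K}_2$, then $m_k=0$ and $\alpha_k=1$, so such $k$ may be discarded.)

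First I would fix a region containing the line-search segments. For $k\in\mathcal{K}_2$, \cref{prop-xbark}(i) gives $|u^k|_{\min}>\nu$, while \cref{well-defineness} gives $\{x^k\}\subseteq\mathcal{L}_F(x^0)$, which is compact, so $\|u^k\|_{\infty}\le R_1:=\sup_{x\in\mathcal{L}_F(x^0)}\|x\|_{\infty}<\infty$. Setting $\alpha_0:=\min\{1,\nu/(2D)\}$, for every $\alpha\in[0,\alpha_0]$ and $t\in[0,1]$ the point $u^k+t\alpha d_{S_k}^k$ has each coordinate of the same sign as $u^k_i$ and of absolute value in $[\nu/2,\,R_1+\nu/2]$, so the whole segment $[u^k,u^k+\alpha d_{S_k}^k]$ lies in a single orthant-restricted box on which, by \cref{property-FS}(i), the smoothness of $t\mapsto|t|^{q-2}$ away from $0$, and \cref{ass1} (which makes $\nabla^2 f$ Lipschitz on the relevant compact image of that box), $\nabla^2 F_{S_k}$ is Lipschitz continuous. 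Taking the maximum of these Lipschitz constants over the finitely many index sets $S\subseteq[n]$ and sign patterns yields a constant $L_H<\infty$ valid for all $k\in\mathcal{K}_2$.

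Then I would Taylor-expand: for $\alpha\le\alpha_0$ there is $\xi$ on the segment with $F_{S_k}(u^k+\alpha d_{S_k}^k)=F_{S_k}(u^k)+\alpha\langle r^k,d_{S_k}^k\rangle+\tfrac{\alpha^2}{2}\langle\nabla^2 F_{S_k}(\xi)d_{S_k}^k,d_{S_k}^k\rangle$, and combining $\langle\nabla^2F_{S_k}(\xi)d_{S_k}^k,d_{S_k}^k\rangle\le\langle H^k d_{S_k}^k,d_{S_k}^k\rangle+L_H\alpha\|d_{S_k}^k\|^3$ with $\langle r^k,d_{S_k}^k\rangle=-\theta_k$ and $\langle H^k d_{S_k}^k,d_{S_k}^k\rangle=\theta_k-(b_1\zeta_k+b_2\|r^k\|^{\sigma})\|d_{S_k}^k\|^2\le\theta_k$ gives $F_{S_k}(u^k+\alpha d_{S_k}^k)-F_{S_k}(u^k)\le-\alpha\theta_k+\tfrac{\alpha^2}{2}\theta_k+\tfrac{L_H}{2}\alpha^3\|d_{S_k}^k\|^3$. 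Since $\varrho<1/2$, for $\alpha\le1$ one has $1-\tfrac{\alpha}{2}-\varrho\ge\tfrac12-\varrho>0$, so the test \cref{ls-NT} (whose right side is $-\varrho\alpha\theta_k$) holds as soon as $\tfrac{L_H}{2}\alpha^2\|d_{S_k}^k\|^3\le(\tfrac12-\varrho)\theta_k$, i.e. whenever $\alpha^2\le(1-2\varrho)\theta_k/(L_H\|d_{S_k}^k\|^3)$. Using $\theta_k/\|d_{S_k}^k\|^3\ge c_*$ from above, \cref{ls-NT} holds for every $\alpha\le\alpha_*:=\min\bigl\{\alpha_0,\sqrt{(1-2\varrho)c_*/L_H}\bigr\}$; hence $m_k$ is no larger than the first nonnegative integer $m$ with $\beta^m\le\alpha_*$, which forces $\alpha_k=\beta^{m_k}\ge\beta\,\alpha_*=:\underline{\alpha}>0$ uniformly over $k\in\mathcal{K}_2$.

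The main obstacle is exactly the degeneracy flagged at the outset: $\theta_k/\|d_{S_k}^k\|^2$ is in general only bounded below by $\lambda_{\min}(G^k)$, which may vanish, so the descent lemma is not enough; one must pass to the cubic term, and the success of that step hinges simultaneously on $\sigma\le1/2$, on the regularization $b_2\|r^k\|^{\sigma}I$, on the a priori bound $\|d_{S_k}^k\|\le b_2^{-1}\|r^k\|^{1-\sigma}$, and on \cref{ass1}. A secondary technical point is the uniformity of the Hessian-Lipschitz constant, which is secured because there are only finitely many candidate supports $S\subseteq[n]$ and because the line-search segments stay in the region where the coordinates are bounded away from $0$.
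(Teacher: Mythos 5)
Your proposal is correct and follows essentially the same route as the paper's proof: a cubic (Lipschitz-Hessian) Taylor estimate on the subspace segment, valid because Assumption \ref{ass1} plus $|u^k|_{\min}>\nu$ and $\|d^k_{S_k}\|\le b_2^{-1}r_{\rm max}^{1-\sigma}$ keep the segment in a region where $\nabla^2F_{S_k}$ is uniformly Lipschitz, combined with the regularization $b_2\|r^k\|^{\sigma}I$, the bound $\|d^k_{S_k}\|\le b_2^{-1}\|r^k\|^{1-\sigma}$ and $\sigma\le 1/2$ to make the acceptance threshold for the Armijo test uniform in $k$ (your constant $c_*=b_2^{2}r_{\rm max}^{2\sigma-1}$ is exactly the quantity the paper extracts from $b_2\|r^k\|^{\sigma}/\|d^k_{S_k}\|\ge b_2^{2}r_{\rm max}^{2\sigma-1}$). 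The only cosmetic differences are that you bound the Hessian-Lipschitz constant by enumerating supports and sign patterns rather than via the third-derivative bound of \cref{property-FS}(ii), and you keep the full curvature $\theta_k$ where the paper retains only the $b_2\|r^k\|^{\sigma}\|d^k_{S_k}\|^2$ part after discarding $H^k+b_1\zeta_kI\succeq 0$; both lead to the same uniform lower bound $\alpha_k\ge\beta\alpha_*$.
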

	\begin{proof}
		Let $C\!:=\mathcal{L}_{F}(x^0)+\frac{1}{2}\nu\mathbb{B}$. It is easy to check that $A(C)$ is a compact subset of $\mathbb{R}^m$. By invoking \cref{ass1}, there exists a constant $L_{\!\nabla^2\!f}>0$ such that
		\begin{equation}\label{Hessf-ineq}
			\|\nabla^2\!f(Ay)-\nabla^2\!f(Az)\|\le L_{\!\nabla^2\!f}\|A(y-z)\|
			\quad\forall y,z\in C.
		\end{equation}
		Fix any integer $m\ge 0$ with $\beta^m\le\min\left\{1,\frac{1}{2}\nu b_2 r_{\!\rm max}^{\sigma-1}\right\}$, where $\nu$ is the same as the one in \cref{prop-xbark}.
		Fix any $k\in\!\mathcal{K}_2$. From $d_{S_k^{c}}^k=0$, $|x_i^k|>\nu$ for all $i\in S_k$ (\cref{prop-xbark}(i)) and \cref{lemma-bound} (i), we have ${\rm sign}(x^k\!+\tau\beta^md^k)={\rm sign}(x^k)$
		and $|x^k\!+\!\tau\beta^md^k|_{\min}>\frac{\nu}{2}$ for all $\tau\in[0,1]$.
		By Lemma \ref{property-FS} (i), $F_{S_k}$ is twice continuously differentiable on an open set containing the line segment between $u^k$ and $u^k\!+\!\beta^md_{S_k}^k$. From the mean-value theorem,  
		\begin{align}\label{Taylor-FSk}
			&F_{S_k}(u^k\!+\!\beta^md_{S^k}^k)-F_{S_k}(u^k)-\langle r^k,\beta^md_{S_k}^k\rangle\nonumber\\
			&=\frac{1}{2}\beta^{2m}\langle \nabla^2F_{S_k}(u^k\!+\!\tau_k \beta^md^k_{S_k})d_{S_k}^k, d_{S_k}^k\rangle
			\ \ {\rm for\ some}\ \tau_k\in[0,1].
		\end{align}
		Note that $x^k\!+\tau\beta^md^k\in C$ for all $\tau\in[0,1]$ by \cref{lemma-bound} (i). By using \cref{property-FS} (ii) with $\kappa=\nu/2$,
		there exists a constant $\widehat{c}_3>0$ (independent of $k$) such that
		\begin{align*}
			\|\nabla^2\!g_{S_k}(u^k)\!-\!\nabla^2\!g_{S_k}(u^k\!+\!\tau_k\beta^md_{S_k}^k)\|
			& \le\!\int_{0}^{\tau_k}\!\|D^3g_{S_k}(u^k\!+\!t\beta^md_{S_k}^k)\beta^md_{S_k}^k\|dt \\
			& \le \tau_k\widehat{c}_3\beta^m\|d^k_{S_k}\|.
		\end{align*}
		In addition, since $x^k, x^k+\tau_k\beta^m d^k\in C$, using inequality \cref{Hessf-ineq}
		with $y=x^k$ and $z=x^k\!+\!\tau_k\beta^md^k$ and noting that
		${\rm supp}(x^k)={\rm supp}(x^k\!+\tau_k\beta^md^k)=S_k$, we have
		\[
		\|A_{S_k}^{\mathbb{T}}\nabla^2\!f(A_{S_k}u^k)A_{S_k}\!-\!A_{S_k}^{\mathbb{T}}\nabla^2\!f(A_{S_k}(u^k\!+\!\tau_k\beta^md_{S_k}^k))A_{S_k}\|
		\le \tau_kL_{\!\nabla^2\!f}\|A_{S_k}\|^3\beta^m\|d^k_{S_k}\|.
		\]
		From the last two inequalities with the expression of $\nabla^2F_{S_k}$, it follows that
		\begin{equation}\label{Hessian-Lip}
			\|\nabla^2F_{S_k}(u^k)\!-\!\nabla^2F_{S_k}(u^k\!+\!\tau_k\beta^md_{S_k}^k)\|
			\le (L_{\!\nabla^2\!f}\|A_{S_k}\|^3\!+\!\lambda\widehat{c}_3)\beta^m\|d^k_{S_k}\|.
		\end{equation}
		Combining \cref{Taylor-FSk}-\cref{Hessian-Lip} with (S4) of \cref{hybrid} and recalling that $H^k=\nabla^2F_{S_k}(u^k)$, we obtain
		\begin{align}
			&\ F_{S_k}(u^k) - F_{S_k}(u^k\!+\!\beta^md^k_{S_k}) + \varrho\beta^m\langle r^k, d^k_{S_k}\rangle \nonumber\\
			%	&=-(1\!-\!\varrho)\beta^m\langle r^k, d^k_{S_k}\rangle
			%				- \frac{1}{2}\beta^{2m}\langle d^k_{S_k},\nabla^2F_{S_k}(u^k\!+\!\tau_k\beta^md^k_{S_k}) d^k_{S_k}\rangle \nonumber\\
			&=(1\!-\!\varrho)\beta^m\left\langle(H^k\!+\!b_1\zeta_k I\!+\!b_2\| r^k\|^\sigma I)d^k_{S_k},d^k_{S_k}\right\rangle\nonumber\\
			&\quad\ \!-\!\frac{1}{2}\beta^{2m}\langle d^k_{S_k},\nabla^2\!F_{S_k}(u^k\!+\!\tau_k\beta^md^k_{S_k})d^k_{S_k}\rangle \nonumber\\
			%	&= (1\!-\!\varrho)\beta^m \left\langle(H^k+b_1\Lambda_k I)d^k_{S_k}, d^k_{S_k}\right\rangle
			%       +(1-\varrho)b_2 \beta^m\|r^k\|^\sigma \|d^k_{S_k}\|^2 \nonumber \\
			%   &\quad -\frac{1}{2}\beta^{2m}\langle d^k_{S_k},\nabla^2F_{S_k}(u^k\!+\!\tau_k\beta^md^k_{S_k}) d^k_{S_k}\rangle \nonumber\\
			%&\ge \frac{1}{2} \beta^{2m} \left\langle(H^k + b_1\Lambda_k I)d^k_{S_k}, d^k_{S_k}\right\rangle
			%          +\frac{1}{2}b_2 \beta^m\|r^k\|^\sigma \|d^k_{S_k}\|^2 \nonumber \\
			%	&\quad -\frac{1}{2}\beta^{2m}\langle d^k_{S_k},\nabla^2F_{S_k}(u^k\!+\!\tau_k\beta^md^k_{S_k}) d^k_{S_k}\rangle \nonumber\\
			%&\ge \frac{1}{2}\beta^{2m} \left\langle H^k d^k_{S_k}, d^k_{S_k}\right\rangle
			%         +\frac{1}{2}b_2\beta^m \|r^k\|^\sigma \|d_{S_k}^k\|^2
			%		 -\frac{1}{2}\beta^{2m}\langle d^k_{S_k},\nabla^2F_{S_k}(u^k\!+\!\tau_k\beta^md^k_{S_k}) d^k_{S_k}\rangle \nonumber\\
			&\ge\frac{1}{2}b_2\beta^m \|r^k\|^\sigma \|d_{S_k}^k\|^2 +\frac{1}{2}\beta^{2m}\!\left\langle
			\left(H^k\!-\!\nabla^2F_{S_k}(u^k\!+\!\tau_k\beta^md^k_{S_k})\right)d^k_{S_k}, d^k_{S_k}\right\rangle \nonumber\\
			&\ge \frac{1}{2}b_2\beta^m \|r^k\|^\sigma \|d_{S_k}^k\|^2
			-\frac{1}{2}(L_{\!\nabla^2\!f}\|A_{S_k}\|^3\!+\!\lambda\widehat{c}_3)\beta^{3m}\|d^k_{S_k}\|^3 \nonumber \\
			&=\frac{1}{2}\beta^m\|d_{S_k}^k\|^3 \Big( b_2\frac{\|r^k\|^{\sigma}}{\|d^k_{S_k}\|}
			-\widetilde{c}_3\beta^{2m}\Big)\ \ {\rm with}\ \widetilde{c}_3:=L_{\!\nabla^2\!f}\|A_{S_k}\|^3\!+\!\lambda\widehat{c}_3,
		\end{align}
		where the first equality is using $r^k=-G^kd_{S_k}^k$ by (S4), and
		the first inequality is due to $H^k+b_1\zeta_k I \succeq 0,\varrho\in(0,\frac{1}{2}]$
		and $\zeta_k\geq 0$. By the definition of $d_{S_k}^k$ and \cref{lemma-bound} (i),
		\begin{equation}\label{dk-bound}
			\frac{\|d_{S_k}^k\|}{\|r^k\|^{\sigma}} \le \frac{\|(G^k)^{-1}\|\|r^k\|}{\|r^k\|^{\sigma}}
			\leq  \frac{\|r^k\|^{1-2\sigma}}{b_2} \leq \frac{r_{\!\rm max}^{1-2\sigma}}{b_2}.
		\end{equation}
		The above arguments demonstrate that whenever
		$\beta^m\le\!\min\big\{1,\frac{1}{2}\nu b_2r_{\!\rm max}^{\sigma-1},\frac{b_2}{\sqrt{\widetilde{c}_3r_{\!\rm max}^{1-2\sigma}}}\big\}$,
		\[
		F_{S_k}(u^k)\!-F_{S_k}(u^k+\beta^md^k_{S_k})+\varrho\beta^m\langle r^k, d^k_{S_k}\rangle\ge0.
		\]
		Let $\underline{\alpha}\!:=\beta\min\big\{1,
		\frac{1}{2}\nu b_2r_{\!\rm max}^{\sigma-1},\frac{b_2}{\sqrt{\widetilde{c}_3r_{\!\rm max}^{1-2\sigma}}}\big\}$. Then, for all $k\in \mathcal{K}_2$,
		$\alpha_k\ge\underline{\alpha}$.
	\end{proof}
	%-------------------------------------------------------------------------------------
	\subsection{Convergence rate of objective function value sequence}\label{sec4.1}
	%-------------------------------------------------------------------------------------
	We have achieved the convergence of the sequence  $\{F(x^k)\}_{k\in\mathbb{N}}$ in \cref{well-defineness} (i). To establish its convergence rate, we need two technical lemmas. Among others, \cref{lemma-sequence} states that 
	$\{F(x^k)\}_{k\in\mathbb{N}}$ is sufficiently decreasing under \cref{ass1}, while \cref{lemma-gap}
	reveals that under \cref{ass1} the subsequence $\{d^k\}_{k\in\mathcal{K}_2}$ converges to $0$.
	%-------------------------------------------------------------------------------------
	\begin{lemma}\label{lemma-sequence}
		Let $\{x^k\}_{k\in\mathbb{N}}$ and $\{\overline{x}^k\}_{k\in\mathbb{N}}$
		be the sequences yielded by \cref{hybrid}. Then, under \cref{ass1},
		the following assertions hold.
		\begin{itemize}
			\item [(i)] There exists $\widehat{\gamma}>0$ such that for all $k\in\mathbb{N}$,
			$F(x^{k+1})\le F(x^k)-\frac{\widehat{\gamma}}{2}\|x^k -\overline{x}^k \|^2$.
			
			\item[(ii)]  $\lim_{k\rightarrow\infty}\|x^k-\overline{x}^k\|=0$.
			
			\item[(iii)] Every element of $\Omega(x^0)$ is an $L$-type stationary point of \eqref{model}.		
		\end{itemize}
	\end{lemma}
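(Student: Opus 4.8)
The plan is to prove (i) by separating the two families of iterates $\mathcal{K}_1$ and $\mathcal{K}_2$, then to obtain (ii) from a summation argument, and (iii) by passing to the limit in the proximal subproblem that defines $\overline{x}^k$. For $k\in\mathcal{K}_1$ we have $x^{k+1}=\overline{x}^k$, so the exit condition of the \textbf{while}-loop in \cref{gist} gives $F(x^{k+1})\le F(x^k)-\tfrac{\widetilde{\alpha}}{2}\|x^k-\overline{x}^k\|^2$ immediately; the substantive case is $k\in\mathcal{K}_2$, treated next.

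For $k\in\mathcal{K}_2$, \cref{if-else} holds, so ${\rm sign}(x^k)={\rm sign}(\overline{x}^k)$, whence $S_k=\overline{S}_k$ and $\|u^k-\overline{u}^k\|=\|x^k-\overline{x}^k\|$; moreover $x^{k+1}$ is supported on $S_k$, so $F(x^k)=F_{S_k}(u^k)$ and $F(x^{k+1})=F_{S_k}(u^k+\alpha_k d^k_{S_k})$, and the Armijo test \cref{ls-NT} yields $F(x^{k+1})\le F(x^k)+\varrho\alpha_k\langle r^k,d^k_{S_k}\rangle$. Since $r^k=-G^k d^k_{S_k}$, we have $\langle r^k,d^k_{S_k}\rangle=-\langle (G^k)^{-1}r^k,r^k\rangle\le-\|r^k\|^2/\|G^k\|$, and $\|G^k\|\le\|H^k\|+b_1\zeta_k+b_2\|r^k\|^\sigma$ is bounded above by a constant $c_G>0$ independent of $k$, because $\zeta_k\le\|H^k\|\le\widehat{c}_2$ (\cref{property-FS}(ii), applicable since $|x^k|_{\min}>\nu$ by \cref{prop-xbark}(i)) and $\|r^k\|\le r_{\max}$ (\cref{lemma-bound}(i)). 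Combining $\|r^k\|\ge\tfrac{\varpi}{4}\|u^k-\overline{u}^k\|=\tfrac{\varpi}{4}\|x^k-\overline{x}^k\|$ from \cref{lemma-bound}(ii) with $\alpha_k\ge\underline{\alpha}$ from \cref{ls-Newton} then gives $F(x^{k+1})\le F(x^k)-\tfrac{\varrho\underline{\alpha}\varpi^2}{16c_G}\|x^k-\overline{x}^k\|^2$; taking $\widehat{\gamma}:=\min\{\widetilde{\alpha},\tfrac{\varrho\underline{\alpha}\varpi^2}{8c_G}\}$ proves (i) for all $k$.

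For (ii), summing the inequality in (i) and using that $\{F(x^k)\}_{k\in\mathbb{N}}$ is nonincreasing and bounded below (\cref{well-defineness}(i)) gives $\tfrac{\widehat{\gamma}}{2}\sum_{k}\|x^k-\overline{x}^k\|^2\le F(x^0)-\lim_{k\to\infty}F(x^k)<\infty$, so $\|x^k-\overline{x}^k\|\to0$. For (iii), fix $x^*\in\Omega(x^0)$ and a subsequence $x^{k_j}\to x^*$; by (ii) also $\overline{x}^{k_j}\to x^*$, and since $\mu_{\min}\le\overline{\mu}_{k_j}<\widetilde{L}$ (\cref{well-defineness}(iii)), after passing to a further subsequence we may assume $\overline{\mu}_{k_j}\to\overline{\mu}_*\in[\mu_{\min},\widetilde{L}]$. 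Each $\overline{x}^{k_j}$ minimizes $z\mapsto\tfrac{\overline{\mu}_{k_j}}{2}\|z-y^{k_j}\|^2+\lambda g(z)$ with $y^{k_j}:=x^{k_j}-\overline{\mu}_{k_j}^{-1}\nabla\psi(x^{k_j})$; letting $j\to\infty$ in its defining inequality, and using that $\nabla\psi$ and $g$ are continuous so that $y^{k_j}\to y^*:=x^*-\overline{\mu}_*^{-1}\nabla\psi(x^*)$ and $g(\overline{x}^{k_j})\to g(x^*)$, we conclude that $x^*$ minimizes $z\mapsto\tfrac{\overline{\mu}_*}{2}\|z-y^*\|^2+\lambda g(z)$, i.e. $x^*\in{\rm prox}_{\overline{\mu}_*^{-1}(\lambda g)}(x^*-\overline{\mu}_*^{-1}\nabla\psi(x^*))$, which by \cref{L-stationary} means $x^*$ is an $L$-type stationary point.

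I expect the $k\in\mathcal{K}_2$ half of (i) to be the main obstacle: the decrease must be controlled by $-\|x^k-\overline{x}^k\|^2$ rather than by the directly available quantities $\langle r^k,d^k_{S_k}\rangle$ or $-\|d^k_{S_k}\|^2$, which requires first the identity $S_k=\overline{S}_k$ (hence $\|u^k-\overline{u}^k\|=\|x^k-\overline{x}^k\|$) and then the quantitative lower bound $\|r^k\|\ge\tfrac{\varpi}{4}\|u^k-\overline{u}^k\|$ from \cref{lemma-bound}(ii); one must also verify that the constants $c_G$ (upper bound on $\|G^k\|$) and $\underline{\alpha}$ (lower bound on $\alpha_k$) are genuinely uniform in $k$, which is exactly where \cref{ass1} together with \cref{property-FS}(ii), \cref{lemma-bound}(i), \cref{prop-xbark}(i) and \cref{ls-Newton} enter.
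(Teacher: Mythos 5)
Your proposal is correct and follows essentially the same route as the paper: the same $\mathcal{K}_1/\mathcal{K}_2$ split with the Armijo decrease, the uniform bound $\|G^k\|\le(1+b_1)\widehat{c}_2+b_2r_{\max}^{\sigma}$, the lower bounds $\|r^k\|\ge\frac{\varpi}{4}\|x^k-\overline{x}^k\|$ and $\alpha_k\ge\underline{\alpha}$, and summation for (ii). The only (harmless) difference is in (iii), where you pass to the limit directly in the inequality defining ${\rm prox}_{\overline{\mu}_{k_j}^{-1}(\lambda g)}$ using the continuity of $g$ and $\nabla\psi$, whereas the paper cites the upper/outer semicontinuity of the proximal mapping; both arguments are valid and yours is slightly more self-contained.
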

	\begin{proof}
		{\bf(i)-(ii)} By \cref{well-defineness} (i), $\{x^k\}_{k\in\mathbb{N}}$ is contained in the compact set $\mathcal{L}_{F}(x^0)$, while $|x^k|_{\min}>\nu$ for all $k\in\mathcal{K}_2$ by \cref{prop-xbark} (i). Then, by 
		invoking \cref{property-FS} (ii) with $C=\mathcal{L}_{F}(x^0)$ and $\kappa=\nu$, there exists $\widehat{c}_2>0$ (independent of $k$) such that
		$\|H^k\|=\|\nabla^2F_{S_k}(u^k)\|\le\widehat{c}_2$ for all $k\in\mathcal{K}_2$.
		Together with the expression of $G^k$ in (S4) and \cref{lemma-bound} (i), for all $k\in\mathcal{K}_2$,
		\begin{equation}\label{Gk-snorm}
			G^k\preceq [(1\!+\!b_1)\|H^k\|+b_2\|r^k\|^{\sigma}]I\preceq [(1\!+\!b_1)\widehat{c}_2+ b_2r_{\!\rm max}^{\sigma}]I.
		\end{equation}
		From the line search step in (\ref{ls-NT}), \cref{lemma-bound} (ii) and \cref{ls-Newton}, for all $k\in\mathcal{K}_2$,
		\begin{align}\label{curve-ineq}
			& F(x^{k+1})-F(x^k)\le\alpha_k\varrho\langle r^k, d^k_{S_k}\rangle
			= -\alpha_k\varrho\langle r^k,(G^k)^{-1}r^k \rangle\nonumber\\
			&\le -\frac{\varrho\underline{\alpha}}{(1\!+\!b_1)\widehat{c}_2+ b_2r_{\!\rm max}^{\sigma}}\|r^k\|^2
			\le -\frac{\varrho\underline{\alpha}\varpi^2}{16[(1\!+\!b_1)\widehat{c}_2+ b_2r_{\!\rm max}^{\sigma}]}\|x^k-\overline{x}^k\|^2,
		\end{align}
		where the last inequality is using ${\rm sign}(x^k)={\rm sign}(\overline{x}^k)$
		implied by $k\in \mathcal{K}_2$. In addition, by (S1) of \cref{hybrid} and \cref{gist}, 
		$F(x^{k+1})\le F(x^k)-\frac{\widetilde{\alpha}}{2}\|x^k -\overline{x}^k \|^2$ for all $k\in\mathcal{K}_1$.
		Along with the last inequality, part (i) holds with
		$\widehat{\gamma}=\!\min\big\{\frac{\varrho\underline{\alpha}\varpi^2}{8[(1\!+\!b_1)\widehat{c}_2+ b_2r_{\!\rm max}^{\sigma}]},\widetilde{\alpha}\big\}$. From part (i) and the convergence of $\{F(x^k)\}_{k\in\mathbb{N}}$, we obtain part (ii).
		
		\noindent
		{\bf(iii)} Pick any $x^*\in\Omega(x^0)$. There exists a subsequence $\{x^{k_j}\}_{j\in\mathbb{N}}$
		such that $x^{k_j}\to x^*$ as $j\to\infty$. From part (ii),
		$\lim_{j\to\infty}\overline{x}^{k_j}=x^*$. For each $j\in\mathbb{N}$, from (S1)
		we have $\overline{x}^{k_j}\in{\rm prox}_{\overline{\mu}_{k_j}^{-1}(\lambda g)}
		(x^{k_j}\!-\!\overline{\mu}_{k_j}^{-1}\nabla\psi(x^{k_j}))$; while by \cref{well-defineness} (iii),  $\overline{\mu}_{k_j}\in[\mu_{\rm min},\widetilde{L})$. We assume that $\overline{\mu}_{k_j}\to\overline{\mu}_*$ (if necessary taking a subsequence). Define the mapping $\mathcal{F}(\mu,x):={\rm prox}_{\mu^{-1}(\lambda g)}(x\!-\!\mu^{-1}\nabla\psi(x))$
		for $x\in\mathbb{R}^n$ and $\mu\in[\mu_{\rm min},\widetilde{L})$.
		By \cite[Proposition 4.4]{BS00}, the mapping $\mathcal{F}$ is upper semicontinuous, so it is outer semicontinuous at $(\overline{\mu}_*,x^*)$  by \cite[p. 138-139]{pang03}.
		Thus, $x^*\in{\rm prox}_{\overline{\mu}_*^{-1}(\lambda g)}
		(x^*\!-\!\overline{\mu}_{*}^{-1}\nabla\psi(x^*))$,
		and the result follows.
	\end{proof}
	\begin{lemma}\label{lemma-gap}
		Let $\{x^k\}_{k\in\mathbb{N}}$ and $\{\overline{x}^k\}_{k\in\mathbb{N}}$ be the sequences given by \cref{hybrid}. Then, under \cref{ass1} there exists a constant $\widehat{c}_2>0$ such that for all $k\in \mathcal{K}_2$,
		$$
		{\rm dist}(0,\partial F(x^k))
		\le\!\widetilde{c}_2\|x^{k}\!-\!\overline{x}^{k}\|
		\ \ {\rm with}\
		\widetilde{c}_2=\widehat{L}+\widetilde{L}+\widehat{c}_2,
		$$
		and consequently, $\lim_{\mathcal{K}_2 \ni k\to\infty}\|r^k\|=0$ and
		$ \lim_{\mathcal{K}_2\ni k\rightarrow \infty}\|d^k\|=0$.
	\end{lemma}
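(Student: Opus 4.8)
The plan is to reduce everything to a bound on $\|r^k\|$. By \cref{property-FS} (iii), for each $k\in\mathcal{K}_2$ (so $x^k\ne 0$ with $S_k={\rm supp}(x^k)$) we have ${\rm dist}(0,\partial F(x^k))=\|\nabla F_{S_k}(u^k)\|=\|r^k\|$, so it suffices to show $\|r^k\|\le\widetilde{c}_2\|x^k-\overline{x}^k\|$. Fix $k\in\mathcal{K}_2$. Because \cref{hybrid} went to Step 2 at iteration $k$, the switching condition \eqref{if-else} held, whence ${\rm sign}(x^k)={\rm sign}(\overline{x}^k)$, so $S_k=\overline{S}_k$ and ${\rm sign}(u^k)={\rm sign}(\overline{u}^k)$. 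Set $B_k:=A_{S_k}$ and $v^k:=u^k-\overline{\mu}_k^{-1}B_k^{\mathbb{T}}\nabla\!f(B_ku^k)$ as in the proof of \cref{lemma-bound} (ii); since $x^k$ and $\overline{x}^k$ are supported on $S_k$, $B_ku^k=Ax^k$, $B_k\overline{u}^k=A\overline{x}^k$, and $B_k^{\mathbb{T}}\nabla\!f(B_ku^k)=[\nabla\psi(x^k)]_{S_k}$ (likewise at $\overline{x}^k$).

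Next I derive a three-term decomposition of $r^k$. The PG optimality condition \eqref{hk-optcond} reads $B_k^{\mathbb{T}}\nabla\!f(B_ku^k)+\overline{\mu}_k(\overline{u}^k-u^k)+\lambda q\,{\rm sign}(\overline{u}^k)\circ|\overline{u}^k|^{q-1}=0$. Subtracting this from $r^k=B_k^{\mathbb{T}}\nabla\!f(B_ku^k)+\lambda q\,{\rm sign}(u^k)\circ|u^k|^{q-1}$, and using $\nabla(\lambda g_{S_k})(u)=\lambda q\,{\rm sign}(u)\circ|u|^{q-1}$ together with $\nabla F_{S_k}(u)=B_k^{\mathbb{T}}\nabla\!f(B_ku)+\nabla(\lambda g_{S_k})(u)$, I obtain
\[
r^k=\overline{\mu}_k(u^k-\overline{u}^k)+\big(\nabla F_{S_k}(u^k)-\nabla F_{S_k}(\overline{u}^k)\big)-\big([\nabla\psi(x^k)]_{S_k}-[\nabla\psi(\overline{x}^k)]_{S_k}\big).
\]
Now I bound the three terms. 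First, $\overline{\mu}_k<\widetilde{L}$ by \cref{well-defineness} (iii) and $\|u^k-\overline{u}^k\|=\|x^k-\overline{x}^k\|$, so the first term is at most $\widetilde{L}\|x^k-\overline{x}^k\|$. Third, $x^k,\overline{x}^k\in\mathcal{L}_F(x^0)$ by \cref{well-defineness} (i), so by the definition of $\widehat{L}$ the third term is at most $\|\nabla\psi(x^k)-\nabla\psi(\overline{x}^k)\|\le\widehat{L}\|x^k-\overline{x}^k\|$. Second, since ${\rm sign}(u^k)={\rm sign}(\overline{u}^k)$ and $|u^k|_{\min},|\overline{u}^k|_{\min}>\nu$ by \cref{prop-xbark} (i), every point of the segment $[u^k,\overline{u}^k]$ has all its $S_k$-coordinates of absolute value larger than $\nu$; as this segment also lies in a fixed bounded set (endpoints in the compact $\mathcal{L}_F(x^0)$), \cref{property-FS} (ii) (with $\kappa=\nu$) yields a constant $\widehat{c}_2>0$ independent of $k$ with $\|\nabla^2 F_{S_k}(\cdot)\|\le\widehat{c}_2$ along it, whence $\|\nabla F_{S_k}(u^k)-\nabla F_{S_k}(\overline{u}^k)\|\le\widehat{c}_2\|u^k-\overline{u}^k\|=\widehat{c}_2\|x^k-\overline{x}^k\|$. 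Summing the three bounds gives ${\rm dist}(0,\partial F(x^k))=\|r^k\|\le(\widehat{L}+\widetilde{L}+\widehat{c}_2)\|x^k-\overline{x}^k\|$, i.e. the claim with $\widetilde{c}_2=\widehat{L}+\widetilde{L}+\widehat{c}_2$.

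For the ``consequently'' part, \cref{lemma-sequence} (ii) gives $\|x^k-\overline{x}^k\|\to 0$, hence $\lim_{\mathcal{K}_2\ni k\to\infty}\|r^k\|=0$. Then $\lambda_{\min}(G^k)\ge b_2\|r^k\|^{\sigma}$ (from (S4)) yields $\|d^k\|=\|(G^k)^{-1}r^k\|\le b_2^{-1}\|r^k\|^{1-\sigma}$, and since $1-\sigma>0$ (as $\sigma\in(0,1/2]$) we get $\lim_{\mathcal{K}_2\ni k\to\infty}\|d^k\|=0$.

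I expect no deep obstacle here. The only points needing care are (a) the algebra producing the three-term decomposition of $r^k$ — correctly invoking the PG optimality condition and the additive split $\nabla F_{S_k}=B_k^{\mathbb{T}}\nabla\!f(B_k\cdot)+\nabla(\lambda g_{S_k})$ — and (b) ensuring the Lipschitz/Hessian bounds are applied on sets where the uniform constants $\widehat{L}$ and $\widehat{c}_2$ are valid, in particular that the segment $[u^k,\overline{u}^k]$ stays coordinatewise away from the origin, which relies on the sign-stability in \eqref{if-else} combined with the uniform lower bound $\nu$ on $|u^k|_{\min}$ and $|\overline{u}^k|_{\min}$.
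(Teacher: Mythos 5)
Your proof is correct and follows essentially the same route as the paper's: it uses the same ingredients (the prox-optimality of $\overline{x}^k$, the bound $\overline{\mu}_k<\widetilde{L}$ from \cref{well-defineness}, the Lipschitz constant $\widehat{L}$ of $\nabla\psi$ on $\mathcal{L}_F(x^0)$, and the uniform Hessian bound $\widehat{c}_2$ along the sign-stable segment $[u^k,\overline{u}^k]$ guaranteed by \eqref{if-else} and \cref{prop-xbark}), producing the identical constant $\widetilde{c}_2=\widehat{L}+\widetilde{L}+\widehat{c}_2$. The only difference is organizational: the paper first bounds ${\rm dist}(0,\partial F(\overline{x}^k))$ via the full-space subdifferential inclusion and then transports from $\overline{u}^k$ to $u^k$, whereas you cancel $B_k^{\mathbb{T}}\nabla f(B_ku^k)$ directly through the coordinatewise stationarity equation \eqref{hk-optcond}.
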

	\begin{proof}
		Fix any $k\in\mathcal{K}_2$. Since $\overline{x}^{k}\in{\rm prox}_{\overline{\mu}_{k}^{-1}(\lambda g)}\big(x^{k}\!-\!\overline{\mu}_{k}^{-1}\nabla\psi(x^k)\big)$, by \cite[Exercise 8.8]{RW09}, we have
		$0\in \nabla\psi(x^k)+\overline{\mu}_{k}(\overline{x}^{k}-x^{k})+\lambda\partial g(\overline{x}^{k})$, which implies that 
		\[
		\nabla\psi(\overline{x}^k)-\nabla\psi(x^k)
		+\overline{\mu}_{k}(x^{k}-\overline{x}^{k})\in\partial F(\overline{x}^{k}).
		\]
		Recall that $\nabla\psi$ is Lipschitz continuous on the compact set $\mathcal{L}_{F}(x^0)$ with Lipschitz constant not more than $\widehat{L}$,
		which is the same as the one appearing in the proof of \cref{well-defineness} (iii). Then,  $\|\nabla\psi(x^k)-\nabla\psi(\overline{x}^k)\| \leq \widehat{L}\|x^k-\overline{x}^k\|$.
		Together with the last inclusion, using
		$\overline{\mu}_k<\widetilde{L}$ by \cref{well-defineness} (iii) yields that
		\begin{equation}\label{xbarbound}
			\|\nabla F_{S_k}(\overline{u}^{k})\|
			={\rm dist}(0,\partial F(\overline{x}^k))
			\leq (\widehat{L}+\widetilde{L})\|x^k-\overline{x}^k\|.
		\end{equation}
		% 	When $k\in \mathcal{K}_1$, since $x^{k+1}=\overline{x}^k$, we have
		% 	${\rm dist}(0,\partial F(x^{k+1}))\le(\widehat{L}+\widetilde{L})\|x^k-\overline{x}^k\|$,
		% 	so it suffices to consider the case $k\in \mathcal{K}_2$. 
		Let $C$ be a bounded open convex set containing $\mathcal{L}_{F}(x^0)$. By \cref{well-defineness} (i) and the convexity of $C$, $\overline{u}^k+\tau(u^k\!-\!\overline{u}^k)\in C$ for all $\tau\in[0,1]$. Recall that $k\in\mathcal{K}_2$. Hence, $x^k \neq 0$ and ${\rm sign}(u^k)={\rm sign}(\overline{u}^k)$.
		Together with Lemma \ref{prop-xbark} (i), for all $\tau\in[0,1]$, we have $\left|\overline{u}^k+\tau(u^k\!-\!\overline{u}^k)\right|_{\min}\ge\nu$
		and ${\rm sign}(\overline{u}^k\!+\!\tau(u^k\!-\!\overline{u}^k))={\rm sign}(u^k)$. By \cref{property-FS} (ii), there exists a constant $\widehat{c}_2>0$ (independent of $k$) such that for all $\tau\in[0,1]$, $\|\nabla^2F_{S_k}(\overline{u}^k\!+\!\tau(u^k\!-\!\overline{u}^k))\|
		\le\widehat{c}_2$. Note that ${\rm dist}(0,\partial F(x^k))=\|r^k\|$ and
		\begin{align}
			\|r^k\|& =\|r^k\!-\!\nabla F_{S_k}(\overline{u}^{k})+\nabla F_{S_k}(\overline{u}^{k})\|\le \|r^k\!-\!\nabla F_{S_k}(\overline{u}^k)\|+\|\nabla F_{S_k}(\overline{u}^{k})\|\nonumber\\
			&\le\int_{0}^{1}\|\nabla^2F_{S_k}(\overline{u}^k\!+\!\tau(u^k\!-\!\overline{u}^k))(u^k\!-\!\overline{u}^k)\|d\tau\nonumber
			+\|\nabla F_{S_k}(\overline{u}^{k})\| \nonumber\\
			&\leq \widehat{c}_2\|u^k\!-\!\overline{u}^k\|+\|\nabla F_{S_k}(\overline{u}^{k})\|
			\le\big[(\widehat{L}+\widetilde{L})+\widehat{c}_2\big]
			\|x^{k}-\overline{x}^{k}\|, \label{rk-bound}
		\end{align}
		where the last inequality is due to \eqref{xbarbound}. The first part of the conclusions	follows. From 
		%part (ii), 
		\cref{rk-bound}, \cref{lemma-sequence} (ii) and \cref{dir-bound}, we obtain the second part.
	\end{proof}
	
	To achieve the linear convergence rate of the objective sequence $\{F(x^k)\}_{k\in\mathbb{N}}$,
	we first argue that for all sufficiently large $k$, the support of the iterate $x^k$ is stable, and $k\in\mathcal{K}_2$. The latter means that after a finite number of iterates,
	\cref{hybrid} reduces to a regularized Newton method for minimizing the function $F_{S_*}$, where $S_*$ is defined below in \cref{supp-lemma} (i).

	\begin{lemma}\label{supp-lemma}
		Let $\{x^k\}_{k\in\mathbb{N}}$ and $\{\overline{x}^k\}_{k\in\mathbb{N}}$
		be the sequences given by \cref{hybrid}. Then, under \cref{ass1},
		the following assertions hold.
		\begin{itemize}
			\item[(i)] There exists an index set $S_{*}\subseteq[n]$ such that for all sufficiently large $k$,
			\[
			{\rm supp}(x^k) = {\rm supp}(\overline{x}^k) =S_{*};
			\]
			furthermore, every cluster point $x^*$ of $\{x^k\}_{k\in\mathbb{N}}$ satisfies
			${\rm supp}(x^*)=S_{*}$.
			
			\item[(ii)] There exists $\widehat{k}\in\mathbb{N}$ such that for all $k\ge\widehat{k}$, $k\in\mathcal{K}_2$.
		\end{itemize}
	\end{lemma}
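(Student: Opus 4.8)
The plan is to prove part (ii) first and then read off part (i), the engine being \cref{prop-xbark} (iii): it already guarantees that the switching condition \eqref{if-else} holds --- hence that $x^{k+1}$ is produced by Step 2, i.e.\ $k\in\mathcal{K}_2$ --- as soon as $|x^k|_{\min}>\nu/2$ and $\|x^k-\overline{x}^k\|\le\min\{\nu/3,\,2^{q-3}\varpi/(2\lambda q(1-q)(2-q)\nu^{q-3})\}$. Since $\|x^k-\overline{x}^k\|\to0$ by \cref{lemma-sequence} (ii), the whole matter reduces to showing that $|x^k|_{\min}>\nu/2$ for all sufficiently large $k$.

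First I would establish this lower bound on $|x^k|_{\min}$ by inspecting how $x^k$ is generated from $x^{k-1}$. If $k-1\in\mathcal{K}_1$, then $x^k=\overline{x}^{k-1}$ and \cref{prop-xbark} (i) gives $|x^k|_{\min}=|\overline{x}^{k-1}|_{\min}>\nu$. If $k-1\in\mathcal{K}_2$, then $x^k=x^{k-1}+\alpha_{k-1}d^{k-1}$ with $\alpha_{k-1}\in(0,1]$ and $d^{k-1}=(d^{k-1}_{S_{k-1}};0)$, so the coordinates outside $S_{k-1}={\rm supp}(x^{k-1})$ stay exactly zero, while for $i\in S_{k-1}$ we have $|x^{k-1}_i|>\nu$ (again \cref{prop-xbark} (i), valid because $k-1\in\mathcal{K}_2$) and $\|d^{k-1}\|\to0$ by \cref{lemma-gap}, hence $|x^k_i|\ge\nu-\|d^{k-1}\|>\nu/2$ once $k$ is large. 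In both cases ${\rm supp}(x^k)\subseteq{\rm supp}(x^{k-1})$ and $|x^k|_{\min}>\nu/2$ for all large $k$. Combining this with $\|x^k-\overline{x}^k\|\to0$ and \cref{prop-xbark} (iii), there is $\widehat{k}$ such that \eqref{if-else} holds for every $k\ge\widehat{k}$, so (S3) of \cref{hybrid} sends the iteration into Step 2 and $k\in\mathcal{K}_2$; this proves part (ii).

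For part (i), once $k\ge\widehat{k}$ the subspace Newton step gives $S_{k+1}={\rm supp}(x^{k+1})\subseteq{\rm supp}(x^k)=S_k$ because $d^k$ vanishes off $S_k$; thus $\{S_k\}_{k\ge\widehat{k}}$ is a nonincreasing sequence of subsets of $[n]$ and stabilizes at some $S_*$ for all $k$ beyond some $\widehat{k}_1\ge\widehat{k}$. As \eqref{if-else} also forces ${\rm sign}(x^k)={\rm sign}(\overline{x}^k)$, we get ${\rm supp}(\overline{x}^k)={\rm supp}(x^k)=S_*$ for $k\ge\widehat{k}_1$. Finally, for any cluster point $x^*$ of $\{x^k\}_{k\in\mathbb{N}}$, pick a subsequence $x^{k_j}\to x^*$ with $k_j\ge\widehat{k}_1$: every $i\in S_*$ has $|x^{k_j}_i|\ge|x^{k_j}|_{\min}>\nu/2$, which forces $|x^*_i|\ge\nu/2>0$, while every $i\notin S_*$ has $x^{k_j}_i=0$, which forces $x^*_i=0$. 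Hence ${\rm supp}(x^*)=S_*$, completing part (i).

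The delicate point is the interdependence of the two parts: support nesting is only available on $\mathcal{K}_2$, yet certifying eventual membership in $\mathcal{K}_2$ through \cref{prop-xbark} (iii) requires a uniform lower bound on $|x^k|_{\min}$, which has to be extracted simultaneously from both kinds of steps. The place where genuine care is needed is ruling out that a Newton step ever produces a nonzero coordinate of tiny magnitude; this uses precisely the subspace structure $d^k_{S_k^c}=0$, so that coordinates leaving the support drop to exactly zero rather than passing through small nonzero values, together with $\|d^k\|\to0$ to keep the surviving coordinates bounded away from $0$. (The degenerate case $\overline{x}^k=0$, under which $|\overline{x}^k|_{\min}$ is read as $+\infty$, is harmless and can be disposed of in passing.)
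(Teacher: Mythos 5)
Your proof is correct, and its technical engine is the same as the paper's: the uniform bound $|x^k|_{\min}>\nu/2$ for all large $k$ obtained by splitting on $k-1\in\mathcal{K}_1$ (where $x^k=\overline{x}^{k-1}$ and \cref{prop-xbark} (i) applies) versus $k-1\in\mathcal{K}_2$ (where $|x^{k-1}|_{\min}>\nu$, $d^{k-1}_{S_{k-1}^c}=0$ and $\|d^{k-1}\|\to0$ by \cref{lemma-gap}), combined with $\|x^k-\overline{x}^k\|\to0$ from \cref{lemma-sequence} (ii) and the sufficient condition of \cref{prop-xbark} (iii). The difference is structural: the paper proves (i) first, establishing ${\rm supp}(x^k)={\rm supp}(\overline{x}^k)={\rm supp}(x^{k+1})$ directly for \emph{both} kinds of steps via the magnitude bounds ($|x^k|_{\min}>\nu/2$, $|\overline{x}^k|_{\min}>\nu$) and the smallness of $\|x^k-\overline{x}^k\|$ and $\|d^k\|$, and only then deduces (ii); you reverse the order, getting (ii) first and then obtaining the stabilization in (i) from the nesting ${\rm supp}(x^{k+1})\subseteq S_k$ along eventual Newton steps plus finiteness of $[n]$, with ${\rm supp}(\overline{x}^k)={\rm supp}(x^k)$ read off from the sign condition in \eqref{if-else}. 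Both routes are valid; yours makes (i) depend on (ii) but avoids re-running the perturbation argument to compare ${\rm supp}(x^k)$ with ${\rm supp}(x^{k+1})$, while the paper's version keeps (i) self-contained. One minor slip: your claim that ${\rm supp}(x^k)\subseteq{\rm supp}(x^{k-1})$ holds ``in both cases'' is not justified when $k-1\in\mathcal{K}_1$, since the PG step $x^k=\overline{x}^{k-1}$ can activate new coordinates; fortunately you never use that inclusion for $\mathcal{K}_1$ steps (the nesting you actually exploit is only along $k\ge\widehat{k}$, where all steps are Newton steps), so the argument is unaffected.
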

	\begin{proof}
		{\bf(i)} First we argue that  $|x^k|_{\min}>\frac{\nu}{2}$  for all sufficiently large $k$. Indeed, by \cref{prop-xbark} (i), if $k-\!1\in \mathcal{K}_1$,
		i.e., $x^k=\overline{x}^{k-1}$, we have $|x^k|_{\min}\ge\nu$.
		If $k-1\in\mathcal{K}_2$, we have $|x^{k-1}|_{\min}\ge\nu$, while 
		by \cref{lemma-gap}, for all sufficiently large $k$, $\|d^{k-1}\|<\frac{\nu}{3}$, which along with $x^{k}=x^{k-1}\!+\alpha_kd^k,\alpha_k\in(0,1]$,
		$d_{S_k^{c}}^k=0$ and $|x^{k-1}|_{\min}\ge\nu$ implies that $|x^k|_{\min}>\frac{\nu}{2}$.
		Next we argue that for all sufficiently large $k$, ${\rm supp}(x^k)={\rm supp}(\overline{x}^k)$. Indeed, by \cref{lemma-sequence} (ii),
		for all sufficiently large $k$,
		$\|x^k-\overline{x}^{k}\|<\frac{\nu}{3}$. Hence, for every $i\in{\rm supp}(x^k)$, we have
		$|\overline{x}^k_i|\geq|x_i^k|-|x_i^k-\overline{x}_i^{k}|>\frac{\nu}{2}-\frac{\nu}{3} > 0$, which implies that ${\rm supp}(x^k) \subseteq {\rm supp}(\overline{x}^k)$; and for every $i\in {\rm supp}(\overline{x}^k)$, we have $|x^k_i| > |\overline{x}^k_i|-\frac{\nu}{3} > 0$,
		which implies that ${\rm supp}(\overline{x}^k) \subseteq {\rm supp}(x^k)$.
		Thus, ${\rm supp}(\overline{x}^k)={\rm supp}(x^k)$ holds for all $k$ large enough. It remains to show that for all $k$ large enough, ${\rm supp}(x^k) = {\rm supp}(x^{k+1})$. For all sufficiently large $k\in \mathcal{K}_1$, the conclusion holds since $x^{k+1}=\overline{x}^k$ and ${\rm supp}(\overline{x}^k) = {\rm supp}(x^k)$. For all sufficiently large $k\in \mathcal{K}_2$, by \cref{lemma-gap}, we have $\|d^k\|<\frac{\nu}{3}$ and then $\|x^{k+1}\!-x^k \|<\frac{\nu}{3}$, and the conclusion follows by the above arguments.
		To sum up, ${\rm supp}(x^{k+1})={\rm supp}(x^k)= {\rm supp}(\overline{x}^k)$ holds for all sufficiently large $k$. Since  $|x^k|_{\min} > \frac{\nu}{2}$ for all sufficiently large $k$,  following a similar arguments as above we have every cluster point $x^*$ of $\{x^k\}$ satisfies ${\rm supp}(x^*)=S_{*}$.
		
		\noindent
		{\bf (ii)}  By the proof of part (i), we have $|x^k|_{\min} > \frac{\nu}{2}$ for all sufficiently large $k$. Together with \cref{lemma-sequence} (ii) and \cref{prop-xbark} (iii),  the two conditions in \eqref{if-else}
		are satisfied for all $k$ large enough, so there exists $\widehat{k}\in\mathbb{N}$ such that for all $k\ge\widehat{k}$,
		$k\in\mathcal{K}_2$.
	\end{proof}
	
	Now we are in a position to achieve the $Q$-linear convergence rate of the objective value
	sequence $\{F(x^k)\}_{k\in\mathbb{N}}$ under the KL property of the exponent $1/2$ of $F$.
	%---------------------------------------------------------------------------------------
	\begin{proposition}\label{obj-rlinear}
		Suppose that \cref{ass1} holds, and that $F$ is a KL function of
		exponent $1/2$. Then $\{F(x^k)\}_{k\in\mathbb{N}}$ converges to some value $F^*$
		in a $Q$-linear rate. %, where $F^* = \lim_{k\rightarrow \infty} F(x^k)$.
	\end{proposition}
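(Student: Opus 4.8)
The plan is to run the standard {\L}ojasiewicz descent argument, converting the KL inequality of exponent $1/2$ into a one-step contraction for the gap $a_k:=F(x^k)-F^*$, where $F^*$ is the limit of the nonincreasing convergent sequence $\{F(x^k)\}_{k\in\mathbb{N}}$ from \cref{well-defineness} (i). First I would dispose of the degenerate case: if $F(x^{\bar k})=F^*$ for some $\bar k$, then monotonicity of $\{F(x^k)\}_{k\in\mathbb{N}}$ forces $F(x^k)=F^*$ for all $k\ge \bar k$ and the conclusion is immediate (in fact \cref{lemma-sequence} (i) would then give $x^{\bar k}=\overline{x}^{\bar k}$, contradicting the standing assumption that \cref{hybrid} generates an infinite sequence). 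So from now on assume $F(x^k)>F^*$ for every $k$.

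Next I would invoke a uniformized form of the KL property. By \cref{well-defineness} (ii), $\Omega(x^0)$ is nonempty, compact, and $F\equiv F^*$ on it; moreover every point of $\Omega(x^0)$ is an $L$-type stationary point of \eqref{model} by \cref{lemma-sequence} (iii), hence a critical point of $F$, so it lies in ${\rm dom}\,\partial F$ where the assumed KL property of exponent $1/2$ holds. Following \cite[Lemma 6]{Bolte14}, there therefore exist $\varepsilon>0$, $\eta>0$ and $\bar c>0$ such that
\[
{\rm dist}(0,\partial F(x))\ \ge\ \bar c\,\sqrt{F(x)-F^*}\qquad\text{whenever}\ \ {\rm dist}(x,\Omega(x^0))<\varepsilon\ \ \text{and}\ \ F^*<F(x)<F^*+\eta .
\]
Since $\{x^k\}_{k\in\mathbb{N}}$ is bounded with cluster-point set $\Omega(x^0)$, a routine compactness argument yields ${\rm dist}(x^k,\Omega(x^0))\to 0$ (otherwise a bounded subsequence staying a fixed distance away from $\Omega(x^0)$ would still admit a cluster point in $\Omega(x^0)$). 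Combining this with $F(x^k)\downarrow F^*$ and \cref{supp-lemma} (ii), there is $k_0\ge\widehat k$ so that for all $k\ge k_0$ one has $k\in\mathcal{K}_2$, ${\rm dist}(x^k,\Omega(x^0))<\varepsilon$ and $F^*<F(x^k)<F^*+\eta$; hence the displayed KL inequality is valid at $x^k$.

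Then I would chain, for $k\ge k_0$, the three available estimates: the KL inequality above; the residual bound ${\rm dist}(0,\partial F(x^k))\le \widetilde c_2\|x^k-\overline{x}^k\|$ of \cref{lemma-gap} (which requires $k\in\mathcal{K}_2$); and the sufficient-decrease inequality $\tfrac{\widehat\gamma}{2}\|x^k-\overline{x}^k\|^2\le F(x^k)-F(x^{k+1})$ of \cref{lemma-sequence} (i), which holds for every $k$. This gives
\[
\bar c^{\,2}\,a_k\ =\ \bar c^{\,2}\big(F(x^k)-F^*\big)\ \le\ \widetilde c_2^{\,2}\,\|x^k-\overline{x}^k\|^2\ \le\ \frac{2\widetilde c_2^{\,2}}{\widehat\gamma}\big(F(x^k)-F(x^{k+1})\big)\ =\ \frac{2\widetilde c_2^{\,2}}{\widehat\gamma}\,(a_k-a_{k+1}),
\]
i.e. $a_{k+1}\le\big(1-\tfrac{\widehat\gamma\,\bar c^{\,2}}{2\widetilde c_2^{\,2}}\big)\,a_k$ for all $k\ge k_0$; under the standing assumption $a_k>0$ the contraction factor necessarily lies in $(0,1)$, so $\{a_k\}$ and hence $\{F(x^k)\}_{k\in\mathbb{N}}$ converges $Q$-linearly to $F^*$.

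I expect the main points requiring care to be organizational rather than substantive: ensuring that the KL inequality and the residual bound of \cref{lemma-gap} are simultaneously available for all large $k$, which is precisely what \cref{supp-lemma} (ii) (eventual membership in $\mathcal{K}_2$) delivers, and justifying ${\rm dist}(x^k,\Omega(x^0))\to 0$ from boundedness alone. Everything else is a direct consequence of the lemmas already established in this section, so no genuine obstacle remains.
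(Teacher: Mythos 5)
Your proposal is correct and follows essentially the same route as the paper's proof: uniformize the KL property of exponent $1/2$ via \cite[Lemma 6]{Bolte14} on the compact set $\Omega(x^0)$, use \cref{supp-lemma} (ii) so that \cref{lemma-gap} applies for all large $k$, and chain the KL inequality with the residual bound and the sufficient decrease of \cref{lemma-sequence} (i) to get the one-step contraction $\Delta_{k+1}\le\rho\Delta_k$. The only differences are cosmetic (handling of the degenerate case and the bookkeeping of the KL constant), so no further comment is needed.
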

	\begin{proof}
		If there exists some $k\in\mathbb{N}$ such that $F(x^{k})=F(x^{k+1})$, by \cref{lemma-sequence} (i), we have $x^{k}=\overline{x}^{k}$, and the stopping condition in (S2) is satisfied, so $\{x^k\}_{k\in\mathbb{N}}$ converges to an $L$-type stationary point within a finite number of steps.
		Hence, it suffices to consider that $F(x^k)>F(x^{k+1})$ for all $k\in\mathbb{N}$.
		Since $F$ is assumed to be a KL function of exponent $1/2$,
		by \cite[Lemma 6]{Bolte14}  and \cref{well-defineness} (ii), there exist $\varepsilon>0$ and $\eta>0$
		such that for all $\overline{x}\in\Omega(x^0)$
		and all $z\in\{x\in\mathbb{R}^n\,|\,{\rm dist}(x,\Omega(x^0))<\varepsilon\}\cap[F(\overline{x})<F<F(\overline{x})+\eta]$,
		\begin{equation}\label{KL-tempineq}
			\varphi'(F(z) -F(\overline{x})){\rm dist}(0,\partial F(z)) \ge 1,
		\end{equation}
		where $\varphi(t)=c\sqrt{t}$ for some $c>0$. Let $x^*$ be a cluster point of $\{x^k\}_{k\in\mathbb{N}}$.
		Clearly, $\lim_{k\rightarrow\infty} {\rm dist}(x^k,\Omega(x^0))=0$.
		Along with $\lim_{k\to\infty}F(x^k)=F(x^*)$, for all sufficiently large $k$,
		%   there exists $\overline{k}\in\mathbb{N}$ such that for all $k\ge\overline{k}$,
		$x^k\in\{x\in\mathbb{R}^n\,|\,{\rm dist}(x,\Omega(x^0))<\varepsilon\}\cap[F(x^*)<F<F(x^*)+\eta]$,
		and then
		\[
		\frac{c}{2} (F(x^k) - F(x^*))^{-1/2} {\rm dist} (0,\partial F(x^k)) \geq 1.
		\]
		Let $\Delta_k=F(x^k)-F(x^*)$ for each $k$. By \cref{supp-lemma} (ii), when $k>\widehat{k}$, $k\in\mathcal{K}_2$.
		Combining the above inequality with \cref{lemma-gap} yields that for all $k>\widehat{k}$ (if necessary by increasing $\widehat{k}$),
		\begin{align*}
			4c^{-2}&\le \big[(\Delta_k)^{-1/2} {\rm dist} (0,\partial F(x^k))\big]^2
			\le \widetilde{c}_2^2(\Delta_k)^{-1}\|x^k-\overline{x}^k\|^2 \\
			&\le 2\widetilde{c}_2^2\widehat{\gamma}^{-1}(\Delta_k)^{-1}[F(x^k) - F(x^{k+1})]
			= 2\widetilde{c}_2^2\widehat{\gamma}^{-1}(\Delta_k)^{-1}(\Delta_k-\Delta_{k+1}),
		\end{align*}
		where the third inequality is due to \cref{lemma-sequence} (i). 
		The last inequality, along with $0<\Delta_{k+1}<\Delta_k$ implies that $\rho\!=\!1-\frac{2\widehat{\gamma}}{(c\widetilde{c}_2)^2} \in (0,1)$. Then, for all  $k\ge\widehat{k}$, we have $\Delta_{k+1}\le\rho\Delta_k$, so that $\{F(x^k)\}_{k\in\mathbb{N}}$ converges to $F^*=F(x^*)$ in a $Q$-linear rate.
	\end{proof}
	%-----------------------------------------------------------------------------------------
	\subsection{Convergence analysis of iterate sequence}\label{sec4.3}
	% and there exists $\widehat{k}\in\mathbb{N}$ such that for all $k\ge\widehat{k}$,
	%   $k\in\mathcal{K}_2$, i.e., after a finite number of iterates,
	%   \cref{hybrid} reduces to a regularized Newton method for minimizing the function $F_{S_k}$.
	%---------------------------------------------------------------------------------------------------

	In order to achieve the convergence of the sequence $\{x^k\}_{k\in\mathbb{N}}$,
	we also need the following assumption:
	%------------------------------------------------------------------------------------------
	\begin{assumption}\label{ass2}
		%   ${\displaystyle\liminf_{\widetilde{\mathcal{K}}_{2}\ni k\rightarrow\infty}}\frac{-\langle r^k, d_{S_k}^k\rangle}{\|r^k\| \|d^k_{S_k}\|}>0$
		%   with $\widetilde{\mathcal{K}}_{2}\!:=\big\{k\in\mathcal{K}_2\,|\,F(x^k)\!>\!F(x^{k+1})\big\}$.
		It holds that ${\displaystyle\liminf_{\mathcal{K}_{2}\ni k\rightarrow\infty}}\frac{-\langle r^k, d_{S_k}^k\rangle}{\|r^k\| \|d^k_{S_k}\|}>0$.
	\end{assumption}
	
	Assumption \ref{ass2} is very common in the global convergence analysis of line search Newton-type methods (see, e.g., \cite{Wright02}), which essentially requires that the angle between $r^k$
	and $d_{S_k}^k$ is sufficiently away from $\pi/2$ and close to $\pi$. Note that the early global convergence analysis of Newton-type methods aims to  achieve $\lim_{k\rightarrow\infty} \|r^k\| = 0$
	% the convergence of the gradient sequence $\{r^k\}_{k\in\mathbb{N}}$
	under Assumption \ref{ass2}. Here, under this assumption, we establish the convergence of the whole iterate sequence for the KL function $F$. 
	%-----------------------------------------------------------------------------------------
	\begin{theorem}\label{gconverge}
		Suppose Assumptions \ref{ass1} and \ref{ass2} hold. The following assertions hold.
		\begin{itemize}
			\item [(i)] If $F$ is a KL function, then $\sum_{k=1}^\infty \|x^{k+1}\!-\!x^k\|<\infty$,
			and consequently, $\{x^k\}_{k\in\mathbb{N}}$ converges to an $L$-type stationary
			point of \eqref{model}, say $x^*$.
			
			\item[(ii)] If $F$ is a KL function of exponent $1/2$ at $x^*$, then $\{x^k\}_{k\in\mathbb{N}}$ converges $R$-linearly to $x^*$.
		\end{itemize}
	\end{theorem}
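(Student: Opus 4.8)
The plan is to follow the now-standard KL descent framework (as in \cite{Attouch13,Bolte14}), but adapted to the hybrid structure of \cref{hybrid}. For part (i), the key ingredients are already in place: a sufficient-decrease property (\cref{lemma-sequence} (i)), a relative-error (slope) bound (\cref{lemma-gap}), and boundedness with a nonempty compact cluster set on which $F$ is constant (\cref{well-defineness} (ii)). First I would note that \cref{supp-lemma} (ii) gives $\widehat{k}$ with $k\in\mathcal{K}_2$ for all $k\geq\widehat{k}$, so beyond $\widehat{k}$ the step length is controlled: $\|x^{k+1}-x^k\|=\alpha_k\|d^k\|\leq\|d^k\|$, and by \cref{lemma-gap} and \cref{rk-bound} we have $\|d^k\|\leq b_2^{-1}\|r^k\|^{1-\sigma}\le C\|x^k-\overline{x}^k\|^{1-\sigma}$; more usefully, I would establish a bound of the form $\|x^{k+1}-x^k\|\le c'\|x^k-\overline{x}^k\|$ for $k\in\mathcal{K}_2$ by combining \cref{lemma-bound} (i)--(ii) (which give $\|d^k\|\le b_2^{-1}\|r^k\|^{1-\sigma}$ and $\|r^k\|\ge\frac{\varpi}{4}\|u^k-\overline u^k\|$) together with \cref{ls-Newton}; one has to be slightly careful because of the exponent $1-\sigma$, but since $\{r^k\}$ is bounded one can absorb $\|r^k\|^{-\sigma}$ into a constant. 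Then the standard telescoping argument applies: using the concavity of $\varphi$, the KL inequality at the cluster point $x^*$ (valid on a neighborhood of $\Omega(x^0)$ by \cite[Lemma 6]{Bolte14}), the sufficient decrease $F(x^k)-F(x^{k+1})\ge\frac{\widehat\gamma}{2}\|x^k-\overline x^k\|^2$, and the slope bound ${\rm dist}(0,\partial F(x^k))\le\widetilde c_2\|x^k-\overline x^k\|$, one derives
\[
2\|x^k-\overline x^k\|\ \le\ \|x^{k-1}-\overline x^{k-1}\|+\tfrac{C}{\widehat\gamma}\big(\varphi(\Delta_k)-\varphi(\Delta_{k+1})\big)
\]
for a suitable constant $C$ (here $\Delta_k=F(x^k)-F(x^*)$), whence $\sum_k\|x^k-\overline x^k\|<\infty$, and then $\sum_k\|x^{k+1}-x^k\|<\infty$ via the bound $\|x^{k+1}-x^k\|\le c'\|x^k-\overline x^k\|$ for $k\ge\widehat k$ (for $k<\widehat k$ there are only finitely many terms). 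A Cauchy sequence converges, and the limit lies in $\Omega(x^0)$, which by \cref{lemma-sequence} (iii) consists of $L$-type stationary points.

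For part (ii), with $\varphi(t)=c\sqrt t$, I would reuse the estimate from the proof of \cref{obj-rlinear} that $\{F(x^k)\}$ converges $Q$-linearly to $F^*=F(x^*)$, i.e.\ $\Delta_{k+1}\le\rho\Delta_k$ with $\rho\in(0,1)$, so $\Delta_k\le\rho^{k-\widehat k}\Delta_{\widehat k}$. Combining this with the sufficient-decrease inequality gives $\|x^k-\overline x^k\|^2\le\frac{2}{\widehat\gamma}(\Delta_k-\Delta_{k+1})\le\frac{2}{\widehat\gamma}\Delta_k$, so $\|x^k-\overline x^k\|\le C_1\rho^{(k-\widehat k)/2}$, and then $\|x^{k+1}-x^k\|\le c'\|x^k-\overline x^k\|\le C_2\rho^{k/2}$ for $k\ge\widehat k$. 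Summing the tail, $\|x^k-x^*\|\le\sum_{j\ge k}\|x^{j+1}-x^j\|\le C_3\rho^{k/2}/(1-\sqrt\rho)$, which is exactly $R$-linear convergence of $\{x^k\}$ to $x^*$.

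The main obstacle I anticipate is the bookkeeping needed to pass cleanly between the three "error" quantities $\|x^k-\overline x^k\|$, $\|r^k\|$, $\|d^k\|$ and the actual displacement $\|x^{k+1}-x^k\|$, while respecting the two regimes $k\in\mathcal K_1$ versus $k\in\mathcal K_2$. On $\mathcal K_1$ one has $x^{k+1}=\overline x^k$ so $\|x^{k+1}-x^k\|=\|x^k-\overline x^k\|$ directly; on $\mathcal K_2$ one needs the chain $\|x^{k+1}-x^k\|=\alpha_k\|d^k\|\le\|d^k\|\le b_2^{-1}\|r^k\|^{1-\sigma}$ and then \cref{lemma-gap}/\cref{rk-bound} to relate $\|r^k\|$ back to $\|x^k-\overline x^k\|$ — but the exponent $1-\sigma<1$ would weaken the bound, so instead I would go through \cref{lemma-bound} (i) to get $\|d^k\|\le b_2^{-1}r_{\max}^{-\sigma}\|r^k\|$ and then use $\|r^k\|\le\widetilde c_2\|x^k-\overline x^k\|$, yielding the clean linear bound $\|x^{k+1}-x^k\|\le c'\|x^k-\overline x^k\|$. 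Since $\mathcal K_2$ is eventually all of $\mathbb N$ by \cref{supp-lemma} (ii), the finitely many indices below $\widehat k$ contribute only a finite sum and do not affect summability or the $R$-linear rate. The rest is the routine KL telescoping, for which \cref{lemma-sequence}, \cref{lemma-gap}, and \cref{well-defineness} supply exactly the three hypotheses required by the abstract convergence theorem of \cite{Attouch13}.
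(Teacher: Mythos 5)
There is a genuine gap in your treatment of part (i), and it sits exactly at the step you flag as "bookkeeping": the claimed bound $\|d^k\|\le b_2^{-1}r_{\max}^{-\sigma}\|r^k\|$ reverses the correct inequality. From $\lambda_{\min}(G^k)\ge b_2\|r^k\|^{\sigma}$ one only gets $\|d^k\|\le b_2^{-1}\|r^k\|^{1-\sigma}$, and since $\|r^k\|\le r_{\max}$ we have $\|r^k\|^{1-\sigma}\ge r_{\max}^{-\sigma}\|r^k\|$, not $\le$; the ratio $\|d^k\|/\|r^k\|$ may blow up like $\|r^k\|^{-\sigma}$ as $r^k\to 0$, because $\lambda_{\min}(G^k)$ is not bounded away from zero. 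Consequently the "clean linear bound" $\|x^{k+1}-x^k\|\le c'\|x^k-\overline{x}^k\|$ is not available from the cited lemmas; it is essentially condition \eqref{dboundxkxbar}, which the paper explicitly cannot guarantee for its regularized Newton directions (\cref{remark-hybrid}(d)), which is strictly stronger than \cref{ass2} (\cref{lemma-cond}), and which \cref{ex1} shows can actually fail ($|t^k-\overline{t}^k|=o(|d^k|)$ there). The correct bound $\|x^{k+1}-x^k\|\le b_2^{-1}\widetilde{c}_2^{\,1-\sigma}\|x^k-\overline{x}^k\|^{1-\sigma}$ does not rescue part (i) either: summability of $\|x^k-\overline{x}^k\|$ does not imply summability of $\|x^k-\overline{x}^k\|^{1-\sigma}$, so the exponent loss cannot be "absorbed into a constant."

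The deeper issue is that your argument never uses \cref{ass2}, which is precisely what the paper's proof relies on to control the displacement without any bound of type \eqref{dboundxkxbar}. After the index $\widehat{k}$ of \cref{supp-lemma}, the paper combines the Armijo condition \eqref{ls-NT} with the angle condition to get $F(x^k)-F(x^{k+1})\ge -\varrho\alpha_k\langle r^k,d_{S_k}^k\rangle\ge \varrho c_{\min}\|r^k\|\,\|x^{k+1}-x^k\|$, and then uses ${\rm dist}(0,\partial F(x^k))=\|r^k\|$ (\cref{property-FS}(iii)) in the KL inequality, so the telescoping with $\varphi$ bounds $\sum\|x^{k+1}-x^k\|$ directly — the intermediate quantity $\|x^k-\overline{x}^k\|$ never enters. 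Your telescoping for $\sum\|x^k-\overline{x}^k\|<\infty$ (via \cref{lemma-sequence}(i) and \cref{lemma-gap}) is fine in itself, but the transfer to $\sum\|x^{k+1}-x^k\|$ is exactly the missing link. For part (ii) your outline is essentially the paper's: with $\Delta_{k+1}\le\rho\Delta_k$ from \cref{obj-rlinear}, the correct bound $\|x^{k+1}-x^k\|\le b_2^{-1}\widetilde{c}_2^{\,1-\sigma}\|x^k-\overline{x}^k\|^{1-\sigma}=O(\rho^{(1-\sigma)k/2})$ is still geometric, so the exponent loss is harmless there — but part (ii) presupposes the convergence $x^k\to x^*$ established in part (i), so the gap above must be repaired (by invoking \cref{ass2} as the paper does) before part (ii) stands.
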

	\begin{proof}
		{\bf(i)} By the proof of \cref{obj-rlinear},
		it suffices to consider the case where $F(x^k)>F(x^{k+1})$ for all $k$.
		%   By \cite[Lemma 6]{Bolte14}, there exist $\varepsilon>0,\eta>0$
		%   and a continuous concave $\varphi\!:[0,\eta)\to\mathbb{R}_{+}$ satisfying \eqref{vphi-cond} such that \eqref{KL-tempineq} holds for all $\overline{x}\in\Omega(x^0)$
		%   and all $z\in\{x\in\mathbb{R}^n\,|\,{\rm dist}(x,\Omega(x^0))<\varepsilon\}\cap[F(\overline{x})<F<F(\overline{x})+\eta]$.
		Let $x^*$ be a cluster point of $\{x^k\}_{k\in\mathbb{N}}$.
		%   From $\lim_{k\rightarrow\infty} {\rm dist}(x^k,\Omega(x^0))=0$
		%   and $\lim_{k\to\infty}F(x^k)=F(x^*)$, for all sufficiently large $k$, it holds that
		Following a similar argument to the proof of \cref{obj-rlinear}, we have for sufficiently large $k$,
		\begin{equation}\label{KLexpre}
			\varphi'(F(x^k) -F(x^*)) {\rm dist}(0, \partial F(x^k)) \geq 1.
		\end{equation}
		By \cref{ass2}, there exists $c_{\rm min}>0$ such that
		for all sufficiently large $k \in \mathcal{K}_2$,
		\begin{equation}\label{ass2-coro}
			-\langle r^k, d^k_{S_k}\rangle >c_{\rm min}\|r^k\|\| d^k_{S_k}\|.
		\end{equation}
		By \cref{supp-lemma},  there exists $\widehat{k}\in\mathbb{N}$ such that for all $k\ge\widehat{k}$, $k\in\mathcal{K}_2$ and $S_k = S_{k+1}$. Together with  \eqref{ls-NT} and \eqref{ass2-coro}, if ncessary by increasing $\widehat{k}$, for all $k\ge\widehat{k}$, we have
		\begin{equation}\label{FS-nabla}
			\frac{F(x^{k}) - F(x^{k+1})}{\|r^k\|}\ge \frac{-\varrho \alpha_k \langle r^k, d_{S_k}^k\rangle }{\|r^k\|}
			\ge \varrho c_{\rm min}\|\alpha_k d^k_{S_k}\|=\varrho c_{\rm min}\|x^{k+1}-x^k\|.
		\end{equation}
		In addition, from the concavity of $\varphi$ on $[0,\eta)$, for all $k>\widehat{k}$, it holds that
		\begin{equation}\label{KL-concave}
			\varphi(F(x^k)\!-\!F(x^*))-\varphi(F(x^{k+1})\!-\!F(x^*))
			\geq \varphi'(F(x^k)\!-\!F(x^*))(F(x^k)\!-\!F(x^{k+1})).
		\end{equation}
		For each $k$, let $\bar{\Delta}_{k}:=\varphi(F(x^k)\!-\!F(x^*))$.
		From \cref{KLexpre} and \cref{FS-nabla}-\eqref{KL-concave}, if possibly enlarging $\widehat{k}$, we have
		for all $k\ge\widehat{k}$,
		\begin{align*}
			\bar{\Delta}_k-\bar{\Delta}_{k+1}&\ge \varphi'(F(x^k)-F(x^*))(F(x^{k}) - F(x^{k+1})) \\
			&\ge\frac{F(x^k)-F(x^{k+1})}{{\rm dist}(0,\partial F(x^k))}
			= \frac{F(x^k)-F(x^{k+1})}{\|r^k\|} \geq \varrho c_{\rm min}\|x^{k+1}-x^k\|.
		\end{align*}
		Summing this inequality from $\widehat{k}$ to any $k>\widehat{k}$ yields that
		\begin{equation*}
			\sum_{j=\widehat{k}}^k\|x^{j+1}\!-\!x^{j}\|
			\le\frac{1}{\varrho c_{\rm min}} \sum_{j=\widehat{k}}^k(\bar{\Delta}_j\!-\!\bar{\Delta}_{j+1})
			= \frac{1}{\varrho c_{\rm min}}(\bar{\Delta}_{\widehat{k}}\!-\!\bar{\Delta}_{k+1})
			\leq \frac{1}{\varrho c_{\rm min}}\bar{\Delta}_{\widehat{k}}.
		\end{equation*}
		Passing the limit $k\to\infty$ to this inequality yields that
		$\sum_{j=\widehat{k}}^\infty \|x^{j+1}\!-\!x^j\|<\infty$. Thus the sequence $\{x^k\}$ converges. By Lemma \ref{lemma-sequence} (iii), the desired result then follows.
		
		\noindent
		{\bf(ii)} 
		For each $k\in\mathbb{N}$, write $\Delta_k:=F(x^k)-F(x^*)$.
		From \cref{supp-lemma} and the proof of \cref{obj-rlinear}, there exists
		$\widehat{k}$ such that for all $k>\widehat{k}$, $k\in \mathcal{K}_2$ and $\Delta_{k+1}\le\rho\Delta_k$. From this recursion formula, 
		\begin{equation}\label{r-linear}
			F(x^k) - F(x^*)\le\Delta_{\widehat{k}}\rho^{k-\widehat{k}}.
		\end{equation}
		By \cref{dir-bound} and \cref{lemma-gap}, for all $k\ge\widehat{k}$,
		$\|d^k \|\le b_2^{-1}\widetilde{c}_2^{1-\sigma}\|x^k-\overline{x}^k\|^{1-\sigma}$.
		Together with part (i), \cref{lemma-sequence} (i) and \eqref{r-linear},
		for all $k\ge\widehat{k}$ it holds that
		\begin{align*}
			\|x^k - x^*\|&\le \sum_{j=k}^\infty \|x^j-x^{j+1}\|
			=\sum_{j=k}^\infty\alpha_j\|d^j\|
			\le\sum_{j=k}^\infty\|d^j\|\le b_2^{-1}\widetilde{c}_2^{1-\sigma}\sum_{j=k}^\infty\|x^j-\overline{x}^j\|^{1-\sigma} \\
			&\le b_2^{-1}\widetilde{c}_2^{1-\sigma}\sum_{j=k}^\infty\Big(\frac{2(F(x^j)-F(x^{j+1}))}
			{\widehat{\gamma}}\Big)^{\frac{1-\sigma}{2}}\\
			&\le b_2^{-1}\widetilde{c}_2^{1-\sigma}\Big(\frac{2\Delta_{\widehat{k}}}
			{\widehat{\gamma}\rho^{\widehat{k}}}\Big)^{\frac{1-\sigma}{2}}\sum_{j=k}^\infty\rho^{\frac{(1-\sigma)j}{2}}
			\le\Big(\frac{2\Delta_{\widehat{k}}}
			{\widehat{\gamma}\rho^{\widehat{k}}}\Big)^{\frac{1-\sigma}{2}}\!
			\frac{\widetilde{c}_2^{1-\sigma}}{b_2(1\!-\!\rho^{1/4})}\rho^{k/4}.
		\end{align*}
		This means that the sequence $\{x^k\}_{k\in\mathbb{N}}$ converges to $x^*$ in an $R$-linear rate.
	\end{proof}
	%----------------------------------------------------------------------------------------
	
	By \cref{prop-FS-KL}, to check the KL property with exponent $1/2$ of $F$ at $x^*$, it suffices to verify that of $F_{S_*}$ at $x^*_{S_*}$, and due to the sufficient smoothness of $F_{S_*}$ at $x^*_{S_*}$, the verification of the latter is easier than that of the former. By  \cite[Lemma 3]{Xu15}, the nonsingularity of $\nabla^2 \!F_{S_*}(x^*_{S_*})$ implies the KL property of exponent $1/2$  for $F_{S_*}$ at $x^*_{S_*}$.
	
	\medskip
	By \cref{gconverge}, if Assumptions \ref{ass1}-\ref{ass2} hold and $F$ is a KL function, the sequence $\{x^k\}_{k\in\mathbb{N}}$ is convergent. In the sequel, we denote its limit by $x^*$. By \cref{supp-lemma} (i), ${\rm supp}(x^*) = S_*$. Write
	$$ u^* := x^*_{S_*} \ {\rm and} \ \ \mathcal{U}^*\!:=\!\big\{u\in\mathbb{R}^{|S_{*}|}\,|\,\nabla\!F_{\!S_{*}}(u)=0,\nabla^2\!F_{\!S_{*}}(u)\succeq 0\big\}.$$ 
	To achieve the superlinear convergence rate of $\{x^k\}_{k\in\mathbb{N}}$, we need to bound $\zeta_k$ involved in the matrix $G_k$ by ${\rm dist}(u^k,\mathcal{U}^*)$ as in the following lemma.
	%---------------------------------------------------------------  
	\begin{lemma}\label{lemma-zetak}
		Suppose that Assumptions \ref{ass1} and \ref{ass2} hold, and that $F$ is a KL function.
		% Let $x^*$ be the limit of $\{x^k\}_{k\in\mathbb{N}}$ (by \cref{gconverge}).
		If $\nabla^2F_{\!S_{*}}(u^*)\succeq 0$, then there exists $c_H>0$ such that for all sufficiently large $k$, $\zeta_k\le c_H{\rm dist}(u^k,\mathcal{U}^*)$.
	\end{lemma}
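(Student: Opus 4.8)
The plan is to reduce, for all large $k$, the quantity $\zeta_k=[-\lambda_{\min}(\nabla^2 F_{S_k}(u^k))]_{+}$ to $[-\lambda_{\min}(\nabla^2 F_{S_*}(u^k))]_{+}$ via \cref{supp-lemma} (i) (which gives $S_k=S_*$ for all large $k$, and, combined with the convergence $x^k\to x^*$ from \cref{gconverge} (i), $u^k=x^k_{S_*}\to x^*_{S_*}=u^*$), and then to bound this positive part by comparing $\nabla^2 F_{S_*}(u^k)$ with the Hessian at a nearest point of $\mathcal{U}^*$. First I would record that every coordinate of $u^*$ is nonzero since $S_*={\rm supp}(x^*)$, so by \cref{property-FS} (i) and \cref{ass1} there is a closed ball $\mathcal{N}$ of radius $\delta>0$ centered at $u^*$ on which $F_{S_*}$ is twice continuously differentiable and $\nabla^2 F_{S_*}$ is Lipschitz continuous with some modulus $L>0$. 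Moreover, $x^*$ is an $L$-type stationary point by \cref{gconverge} (i), hence by \cref{property-FS} (iii), $\|\nabla F_{S_*}(u^*)\|={\rm dist}(0,\partial F(x^*))=0$; together with the hypothesis $\nabla^2 F_{S_*}(u^*)\succeq 0$ this gives $u^*\in\mathcal{U}^*$, so $\mathcal{U}^*\cap\mathcal{N}$ is a nonempty closed set.

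Next, since $u^k\to u^*$, for all $k$ large enough we have $\|u^k-u^*\|<\delta/2$ and $u^k\in{\rm int}\,\mathcal{N}$, hence ${\rm dist}(u^k,\mathcal{U}^*)\le\|u^k-u^*\|$ and any point of $\mathcal{U}^*$ within distance $\|u^k-u^*\|$ of $u^k$ lies in $\mathcal{N}$; consequently ${\rm dist}(u^k,\mathcal{U}^*)={\rm dist}(u^k,\mathcal{U}^*\cap\mathcal{N})$, and I may pick $\bar{u}^k\in\Pi_{\mathcal{U}^*\cap\mathcal{N}}(u^k)$, so that $\|u^k-\bar{u}^k\|={\rm dist}(u^k,\mathcal{U}^*)$ and $\bar{u}^k\in\mathcal{N}$. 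Since $\bar{u}^k\in\mathcal{U}^*$, we have $\lambda_{\min}(\nabla^2 F_{S_*}(\bar{u}^k))\ge 0$. Using Weyl's perturbation inequality for the smallest eigenvalue of symmetric matrices and the Lipschitz continuity of $\nabla^2 F_{S_*}$ on $\mathcal{N}$, it then follows that
\begin{align*}
\lambda_{\min}(\nabla^2 F_{S_*}(u^k)) &\ge \lambda_{\min}(\nabla^2 F_{S_*}(\bar{u}^k))-\big\|\nabla^2 F_{S_*}(u^k)-\nabla^2 F_{S_*}(\bar{u}^k)\big\|\\
&\ge -L\|u^k-\bar{u}^k\|=-L\,{\rm dist}(u^k,\mathcal{U}^*).
\end{align*}
Taking positive parts gives $\zeta_k=[-\lambda_{\min}(\nabla^2 F_{S_*}(u^k))]_{+}\le L\,{\rm dist}(u^k,\mathcal{U}^*)$ for all sufficiently large $k$, which is the assertion with $c_H=L$.

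The reduction to $S_k=S_*$ and the eigenvalue perturbation estimate are routine; the only point that requires a little care is the well-posedness of the projection $\bar{u}^k$, i.e., that $\mathcal{U}^*$ is locally closed and nonempty near $u^*$, which is exactly where the smoothness of $F_{S_*}$ at the nonzero vector $u^*$ (via \cref{property-FS} (i)) and the stationarity of $x^*$ (via \cref{property-FS} (iii) and \cref{gconverge} (i)) are used. I do not anticipate any genuinely difficult step in this argument.
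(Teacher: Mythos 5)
Your proof is correct and follows essentially the same route as the paper's: use the stabilized support $S_k=S_*$ and $u^k\to u^*$, note $u^*\in\mathcal{U}^*$, project $u^k$ onto $\mathcal{U}^*$ near $u^*$, and combine the local Lipschitz continuity of $\nabla^2 F_{S_*}$ (from \cref{ass1} and the smoothness of $g_{S_*}$ away from zero) with Weyl's inequality. Your direct application of Weyl between $u^k$ and the projection $\bar u^k$, using only $\lambda_{\min}(\nabla^2F_{S_*}(\bar u^k))\ge 0$, even streamlines the paper's case split and intermediate-value detour, and your restriction of the projection to $\mathcal{U}^*\cap\mathcal{N}$ makes explicit the well-posedness point the paper leaves implicit.
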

	\begin{proof}
		By the proof of \cref{supp-lemma} (i), we have $|x^*|_{\min} \geq \frac{\nu}{2}$. Fix any $\varepsilon<\frac{\nu}{4}$. From \cref{supp-lemma} and \cref{gconverge} (i), there exists $\widetilde{k}\in\mathbb{N}$ such that for all $k>\widetilde{k}$, $k\in\mathcal{K}_2$, $S_k = S_*$ and $u^k \in \mathbb{B}(u^*, \varepsilon/2)$. By following the proof of \cref{ls-Newton}, there exists $c_H>0$ such that for any $u', u''\in \mathbb{B}(u^*, \varepsilon)$, 
		\begin{equation}\label{Hessian-Lip1}
			\|\nabla^2\!F_{\!S_*}(u') - \nabla^2\!F_{\!S_*}(u'')\|\leq c_H\|u' - u''\|.
		\end{equation}
		Fix any $k>\widetilde{k}$.   
		When $\lambda_{\min}(\nabla^2\!F_{\!S_k}(u^k)) > 0$, the desired result is trivial, so it suffices to consider the case $\lambda_{\rm min}(\nabla^2\!F_{\!S_k}(u^k)) \leq 0$. 
		Pick any $\widetilde{u}^{k}\in\Pi_{\mathcal{U}^*}(u^k)$. Since $u^* \in \mathcal{U}^*$, one can deduce that $\|\widetilde{u}^k - u^*\| \leq \| \widetilde{u}^{k} - u^k\| + \|u^k - u^*\| \leq 2\|u^k - u^*\| \leq \varepsilon.$
		%   Note that the function $\mathbb{S}^n\ni A\mapsto\lambda_{\rm min}(A)$
		%   is Lipschitz continuous with constant $1$ {\color{blue} \cite[Corollary III.2.6]{bhatia13}}. 
		If $\lambda_{\rm min}(\nabla^2\!F_{\!S_k}(\widetilde{u}^{k}))=0$, 
		then  by Weyl's inequality \cite[Corollary III.2.6]{bhatia13} we have $\zeta_k=-\lambda_{\rm min}(\nabla^2\!F_{\!S_k}(u^k))
		\le \|\nabla^2\!F_{\!S_k}(\widetilde{u}^{k})\!-\!\nabla^2\!F_{\!S_k}(u^k)\|$, 
		which together with \eqref{Hessian-Lip1} implies that 
		$\zeta_k\le c_H\|u^k\!-\!\widetilde{u}^{k}\|=c_H{\rm dist}(u^k,\mathcal{U}^*)$.
		Now suppose that $\lambda_{\rm min}(\nabla^2\!F_{\!S_k}(\widetilde{u}^{k}))>0$.
		Let $\phi_{k}(t):=\lambda_{\rm min}[\nabla^2\!F_{S_k}(u^k\!+\!t(\widetilde{u}^{k}\!-\!u^k))]$
		for $t\ge 0$. Clearly, $\phi_{k}$ is continuous on an open interval containing $[0,1]$.
		Note that $\phi_k(0)<0$ and $\phi_k(1)>0$. There necessarily exists $\overline{t}_k\in(0,1)$
		such that $\phi_k(\overline{t}_k)=0$. Consequently, by Weyl's inequality,
		\begin{align*}
			\zeta_k
			&=\big[\lambda_{\rm min}(\nabla^2\!F_{S_k}(u^k\!+\!\overline{t}_k(\widetilde{u}^{k}\!-\!u^k)))
			-\lambda_{\rm min}(\nabla^2\!F_{S_k}(u^k))\big]\\
			&\le \|\nabla^2\!F_{S_k}(u^k\!+\!\overline{t}_k(\widetilde{u}^{k}\!-\!u^k))\!-\!\nabla^2\!F_{S_k}(u^k)\|
			\le c_H\|\widetilde{u}^{k}\!-\!u^k\|.
		\end{align*}
		This shows that the desired result holds. The proof is completed.
	\end{proof}
	
	Ueda and Yamashita ever obtained a similar result in \cite[Lemma 5.2]{Ueda10} under the condition that $\mathcal{U}^*$ is the set of local minima of $F_{S_*}$. Here, we remove the local optimality of $\mathcal{U}^*$ and provide a simpler proof. Based on this result, 
	we establish the superlinear convergence rate of $\{x^k\}_{k\in \mathbb{N}}$ under a local error bound condition.
	
	\begin{theorem}\label{lemma-newtondir}
		Suppose that Assumptions \ref{ass1} and \ref{ass2} hold, and that $F$ is a KL function.
		%  Let $x^*$ be the limit of $\{x^k\}_{k\in\mathbb{N}}$.
		%   {\color{blue} Under the background of \cref{lemma-zetak},}
		If $\nabla^2F_{\!S_{*}}(u^*)\succeq 0$ % with $u^*=x_{S_*}^*$
		and there exist $\delta>0$ and $\kappa_1>0$ such that for all $u\in\mathbb{B}(u^*,\delta)$
		\begin{equation}\label{err-bound}
			\kappa_1{\rm dist}(u,\mathcal{U}^*)\!\le\!\|\nabla F_{\!S_{*}}(u)\|,
		\end{equation}
		% where $\mathcal{U}^*\!:=\!\big\{u\in\mathbb{R}^{|S_{*}|}\,|\,\nabla\!F_{\!S_{*}}(u)=0,\nabla^2\!F_{\!S_{*}}(u)\succeq 0\big\}$,
		then the sequence $\{x^k\}_{k\in\mathbb{N}}$ converges to $x^*$ in a Q-superlinear rate of order $1\!+\!\sigma$.
	\end{theorem}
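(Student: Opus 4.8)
The plan is to prove that, after finitely many iterations, \cref{hybrid} is exactly a regularized Newton method applied to $F_{S_*}$, and then to carry out a Levenberg--Marquardt-type analysis around the solution set $\mathcal{U}^*$ driven by the local error bound \eqref{err-bound}. Throughout set $\mu_k:=b_1\zeta_k+b_2\|r^k\|^\sigma$, so that $G^k=H^k+\mu_k I$ and $d_{S_k}^k=-(G^k)^{-1}r^k$. By \cref{supp-lemma} there is $\widehat{k}$ with $S_k=S_*$ and $k\in\mathcal{K}_2$ for all $k\ge\widehat{k}$, and by \cref{gconverge} (i) the whole sequence converges to $x^*$; hence $u^k=x^k_{S_*}\to u^*$, $\nabla F_{S_*}(u^*)=0$ so $u^*\in\mathcal{U}^*$, and $F_{S_*}$ is twice continuously differentiable with $c_H$-Lipschitz Hessian on some ball $\mathbb{B}(u^*,\delta)$ (by \cref{property-FS} and \cref{ass1}, as in \eqref{Hessian-Lip1}). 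Fix any $\widetilde{u}^k\in\Pi_{\mathcal{U}^*}(u^k)$ and put $e^k:=u^k-\widetilde{u}^k$, so that ${\rm dist}(u^k,\mathcal{U}^*)=\|e^k\|\to0$ and $\widetilde{u}^k\to u^*$. First I would record, for all large $k$: (a) since $\nabla F_{S_*}(\widetilde{u}^k)=0$, the integral mean-value theorem gives $r^k=\bar{H}^k e^k$ with $\bar{H}^k:=\int_0^1\nabla^2F_{S_*}(\widetilde{u}^k+te^k)\,dt$, whence $\|r^k\|\le\widehat{c}_2\|e^k\|$ (\cref{property-FS} (ii)), while \eqref{err-bound} gives $\kappa_1\|e^k\|\le\|r^k\|$; (b) \cref{lemma-zetak} gives $\zeta_k\le c_H\|e^k\|$; and therefore (c) $\mu_k=O(\|e^k\|^\sigma)$ and $\lambda_{\min}(G^k)\ge(b_1-1)\zeta_k+b_2\|r^k\|^\sigma\ge b_2\kappa_1^\sigma\|e^k\|^\sigma$.

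The key preliminary estimate is $\|d_{S_k}^k\|=O({\rm dist}(u^k,\mathcal{U}^*))$. Since $b_1>1$, one has $\mu_k\ge b_1\zeta_k>\zeta_k\ge-\lambda_{\min}(H^k)$, so every eigenvalue of $(G^k)^{-1}H^k=(H^k+\mu_k I)^{-1}H^k$ lies in $[-(b_1-1)^{-1},1)$ and $\|(G^k)^{-1}H^k\|\le\max\{1,(b_1-1)^{-1}\}$. Writing $d_{S_k}^k=-[(G^k)^{-1}H^k+(G^k)^{-1}(\bar{H}^k-H^k)]e^k$ and using $\|(G^k)^{-1}\|\,\|\bar{H}^k-H^k\|\le c_H\|e^k\|/(b_2\kappa_1^\sigma\|e^k\|^\sigma)\to0$, I would conclude $\|d_{S_k}^k\|\le c_2\|e^k\|$ for some constant $c_2$ and all large $k$. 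I expect this step to be the main obstacle: it is precisely here that the two structural ingredients enter --- $b_1>1$ keeps $(G^k)^{-1}H^k$ uniformly bounded even though $H^k$ may be indefinite, and the error bound \eqref{err-bound} is needed to absorb the $O(\|e^k\|^{-\sigma})$ growth of $(G^k)^{-1}$ against the $O(\|e^k\|)$ Hessian-Lipschitz term.

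Next I would show that the Armijo step-size in \eqref{ls-NT} satisfies $\alpha_k=1$ for all large $k$. A second-order Taylor expansion of $F_{S_*}$ at $u^k$, combined with $r^k=-G^kd_{S_k}^k$, $\langle H^kd_{S_k}^k,d_{S_k}^k\rangle\ge-\zeta_k\|d_{S_k}^k\|^2$, $\varrho\le\tfrac12$ and $b_1>1$, reduces $F_{S_k}(u^k+d_{S_k}^k)-F_{S_k}(u^k)-\varrho\langle r^k,d_{S_k}^k\rangle$ to an upper bound $-(1-\varrho)b_2\|r^k\|^\sigma\|d_{S_k}^k\|^2+\tfrac{c_H}{6}\|d_{S_k}^k\|^3$, which is $\le0$ as soon as $\|d_{S_k}^k\|\le\tfrac{6(1-\varrho)b_2}{c_H}\|r^k\|^\sigma$; the previous step gives $\|d_{S_k}^k\|/\|r^k\|^\sigma\le(c_2/\kappa_1^\sigma)\|e^k\|^{1-\sigma}\to0$, so the unit step is accepted for all large $k$, i.e.\ $u^{k+1}=u^k-(G^k)^{-1}r^k$.

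Finally, with $\alpha_k=1$, expanding $\nabla F_{S_*}$ at $u^k$ and using $H^kd_{S_k}^k=(G^k-\mu_k I)d_{S_k}^k=-r^k-\mu_kd_{S_k}^k$ yields the identity $\nabla F_{S_*}(u^{k+1})=-\mu_kd_{S_k}^k+\rho^k$ with $\|\rho^k\|\le\tfrac{c_H}{2}\|d_{S_k}^k\|^2$, so that $\|\nabla F_{S_*}(u^{k+1})\|\le\mu_k\|d_{S_k}^k\|+\tfrac{c_H}{2}\|d_{S_k}^k\|^2\le C_1\|e^k\|^{1+\sigma}$, the term $\mu_k\|d_{S_k}^k\|=O(\|e^k\|^\sigma)\,O(\|e^k\|)$ dominating because $1+\sigma\le\tfrac32<2$. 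Applying \eqref{err-bound} at $u^{k+1}$ then gives ${\rm dist}(u^{k+1},\mathcal{U}^*)\le\kappa_1^{-1}C_1({\rm dist}(u^k,\mathcal{U}^*))^{1+\sigma}$, so $\{{\rm dist}(u^k,\mathcal{U}^*)\}$ decreases Q-superlinearly, hence summably with $\sum_{j\ge k}{\rm dist}(u^j,\mathcal{U}^*)\le2\,{\rm dist}(u^k,\mathcal{U}^*)$ for all large $k$. Since $\|x^{k+1}-x^k\|=\alpha_k\|d_{S_k}^k\|\le c_2\,{\rm dist}(u^k,\mathcal{U}^*)$ and $\{x^k\}$ converges, $\|x^k-x^*\|\le\sum_{j\ge k}\|x^{j+1}-x^j\|\le2c_2\,{\rm dist}(u^k,\mathcal{U}^*)$; together with ${\rm dist}(u^k,\mathcal{U}^*)\le\|x^k-x^*\|$ this makes the two quantities comparable, and hence $\|x^{k+1}-x^*\|\le2c_2\,{\rm dist}(u^{k+1},\mathcal{U}^*)\le(2c_2C_1/\kappa_1)({\rm dist}(u^k,\mathcal{U}^*))^{1+\sigma}\le C_2\|x^k-x^*\|^{1+\sigma}$ for all large $k$, which is the claimed Q-superlinear convergence of order $1+\sigma$ (if $x^k=x^*$ for some $k$ then $r^k=0$ and the iterates are eventually constant, so the claim is trivial).
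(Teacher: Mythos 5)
Your proposal is correct, and it reaches the conclusion by re-deriving in-line the two facts that the paper simply imports from Ueda--Yamashita \cite{Ueda10}. The paper's proof, after the same preparatory reductions you use (finite identification of $S_*$ and eventual $k\in\mathcal{K}_2$ via \cref{supp-lemma} and \cref{gconverge}, plus the bound $\zeta_k\le c_H{\rm dist}(u^k,\mathcal{U}^*)$ of \cref{lemma-zetak}, which is exactly what lets one drop the local-optimality hypothesis of \cite{Ueda10}), observes that the tail $\{u^k\}_{k\ge\widetilde k}$ coincides with a sequence generated by the E-RNM of \cite{Ueda10} and then cites \cite[Theorem 5.1]{Ueda10} for the superlinear decay of ${\rm dist}(u^k,\mathcal{U}^*)$ and \cite[Lemma 5.3]{Ueda10} for $\|d^k_{S_k}\|\le c_0\,{\rm dist}(u^k,\mathcal{U}^*)$; your Levenberg--Marquardt-type analysis proves precisely these two statements directly: the eigenvalue bound $\|(G^k)^{-1}H^k\|\le\max\{1,(b_1-1)^{-1}\}$ (using $b_1>1$) together with $\lambda_{\min}(G^k)\ge b_2\|r^k\|^{\sigma}\ge b_2\kappa_1^{\sigma}\|e^k\|^{\sigma}$ gives the direction bound, the acceptance of the unit step in \eqref{ls-NT} follows from $\varrho<\tfrac12$, $b_1>1$ and $\|d^k_{S_k}\|/\|r^k\|^{\sigma}\to0$, and the combination of $\|\nabla F_{S_*}(u^{k+1})\|\le\mu_k\|d^k_{S_k}\|+\tfrac{c_H}{2}\|d^k_{S_k}\|^2$ with \eqref{err-bound} yields the $1+\sigma$ decay of the distance. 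What your route buys is a self-contained argument that makes explicit where each algorithmic parameter and the error bound enter; what the paper's route buys is brevity, at the cost of the reader having to consult \cite{Ueda10} (and \cref{remark-superlinear}(b) for why its Assumption 5.1 can be weakened). The concluding transfer from ${\rm dist}(u^k,\mathcal{U}^*)$ to $\|x^k-x^*\|$ --- summing $\|d^j_{S_j}\|\le c\,{\rm dist}(u^j,\mathcal{U}^*)$ geometrically and using $u^*\in\mathcal{U}^*$ to compare the two quantities --- is essentially identical to the paper's final display, so overall the skeleton matches and only the middle block is argued differently; I see no gap in your version.
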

	\begin{proof}
		By \cref{gconverge} and \cref{supp-lemma}, there exists
		$\widetilde{k} \in \mathbb{N}$ such that for all $k\ge\widetilde{k}$, $k\in\mathcal{K}_2$ and $S_k=S_{*}$.
		By comparing the iterate steps of \cref{hybrid} for $k\ge\widetilde{k}$ with those of E-RNM proposed in \cite{Ueda10}, we conclude that the sequence $\{u^k\}_{k\ge\widetilde{k}}$
		is the same as the one generated by E-RNM of \cite{Ueda10}. 
		% For each $k\ge\widetilde{k}$,
		%   pick any $\widetilde{u}^k\in\Pi_{\mathcal{U}^*}(u^k)$. Clearly,
		%   $\nabla^2F_{\!S_{*}}(\widetilde{u}^k)\succeq 0$ for all $k\ge\widetilde{k}$.
		%   Together with the proof of \cite[Lemma 5.2]{Ueda10}, 
		By \cref{lemma-zetak},
		there exists
		a constant $c_{H}>0$ such that for all $k\ge\widetilde{k}$ (if necessary by increasing $\widetilde{k}$), $\zeta_k\le c_{H}{\rm dist}(u^k,\mathcal{U}^*)$.
		Then, by \cite[Theorem 5.1]{Ueda10}
		${\rm dist}(u^k,\mathcal{U}^*)$ converges to $0$ superlinearly with rate $1\!+\!\sigma$.
		
		Write $\mathcal{X}^*\!:=\!\{x\in\mathbb{R}^n\,|\,x_{S_*}\!\in\mathcal{U}^*,x_{S_{*}^{c}}=0\}$. For all $k\ge\widetilde{k}$, 
		from $S_{k}=S_{*}$, clearly, ${\rm dist}(x^k,\mathcal{X}^*)={\rm dist}(u^k,\mathcal{U}^*)$. Consequently, ${\rm dist}(x^k,\mathcal{X}^*)$ converges to $0$
		superlinearly with rate $1\!+\!\sigma$, i.e., for all $k\ge\widetilde{k}$ (if necessary by enlarging $\widetilde{k}$),
		\begin{equation}\label{super-dire}
			{\rm dist}(x^{k+1},\mathcal{X}^*) = O([{\rm dist}(x^{k},\mathcal{X}^*)]^{1+\sigma}).
		\end{equation}
		Also, by \cite[Lemma 5.3]{Ueda10} there exists a constant $c_0>0$ such that
		for all $k\ge\widetilde{k}$ (if necessary by increasing $\widetilde{k}$),
		\begin{equation}\label{dbound-xkxstar}
			\|d_{S_k}^k\|=\|d_{S_*}^k\|\le c_0{\rm dist}(u^k,\mathcal{U}^*)=c_0{\rm dist}(x^k,\mathcal{X}^*). 
		\end{equation}
		For each $k\ge\widetilde{k}$, pick any $\widetilde{x}^k\in\Pi_{\mathcal{X}^*}(x^k)$.
		By the definition of $\mathcal{X}^*$, ${\rm supp}(\widetilde{x}^k)\subseteq S_{*}$; while from
		$\lim_{k\to\infty}x^{k}=x^*$,
		we have ${\rm supp}(\widetilde{x}^k)\supseteq S_{*}$ for all $k\ge\widetilde{k}$
		(if necessary by increasing $\widetilde{k}$). Then, for all $k\ge\widetilde{k}$,
		$ {\rm supp}(\widetilde{x}^k)=S_{*}$.
		In addition, by \eqref{super-dire} there exists $\rho\in (0,1)$ such that
		${\rm dist}(x^{k+1},\mathcal{X}^*)\le\rho{\rm dist}(x^{k},\mathcal{X}^*)$ for all $k>\widetilde{k}$.
		Together with \cref{dbound-xkxstar}, for all $k\ge\widetilde{k}$ it holds that
		\begin{align*}
			\|x^k-x^{*}\|&\le\sum_{j=k}^\infty\|x^{j}\!-\!x^{j+1}\|\le\sum_{j=k}^\infty \|d^j_{S_j}\|
			\leq c_0\sum_{j=k}^\infty {\rm dist}(x^{j},\mathcal{X}^*)\\
			% & = c_2\sum_{j=k}^\infty {\rm dist}(x^{j},X^*)
			& < c_0\Big(\sum_{j=k}^\infty\rho^{j-k}\Big){\rm dist}(x^{k},\mathcal{X}^*)
			=\frac{c_0}{1-\rho} {\rm dist}(x^{k},\mathcal{X}^*).
		\end{align*}
		By combining this inequality and \eqref{super-dire}, it follows that for all $k>\widetilde{k}$,
		\[
		\|x^{k}-x^{*}\|\le\frac{c_0}{1-\rho}{\rm dist}(x^{k},\mathcal{X}^*)=O([{\rm dist}(x^{k-1},\mathcal{X}^*)]^{1+\sigma})
		\le O(\|x^{k-1}\!-\!x^{*}\|^{1+\sigma}).
		\]
		The desired conclusion then follows. The proof is completed.
	\end{proof}
	%-----------------------------------------------------------------------------------------
	\begin{remark}\label{remark-superlinear}
		{\bf(a)} The local error bound condition \eqref{err-bound} is a little stronger than
		the metric subregularity of $\nabla F_{S_*}$ at $u^*$ for the origin
		because $\mathcal{U}^*$ may be a strict subset of $\nabla F_{\!S_*}^{-1}(0)$,
		but it does not require the isolatedness of $u^*$ and its local optimality. 
		
		\noindent
		{\bf (b)} The proof of the superlinear convergence of E-RNM
		in \cite{Ueda10} relies on Assumption 5.1 therein, which requires the local optimality of $x^*$. After checking its proof, we found that the local optimality of $x^*$ was only used to achieve \cite[Lemma 5.2]{Ueda10}. Thus, by following the same arguments as those for \cref{lemma-zetak}, the local optimality of $x^*$ in their Assumption 5.1 can be removed. 
	\end{remark}
	%----------------------------------------------------------------------------------------
	
	To conclude this section, we take a closer look at \cref{ass2}. The following lemma shows that if the regularized Newton direction $d^k$ from Step 2 satisfies condition \eqref{dboundxkxbar} for all $k\in\mathcal{K}_2$, \cref{ass2} necessarily holds. Together with  \cref{ex1} later, we conclude that \cref{ass2} is weaker than condition \eqref{dboundxkxbar} for our regularized Newton direction $d^k$.
	\begin{lemma}\label{lemma-cond}
		Suppose that \cref{ass1} holds. If $d^k$ yielded by Step 2 of \cref{hybrid} satisfies condition \eqref{dboundxkxbar} for all $k\in\mathcal{K}_2$, then \cref{ass2} holds.
	\end{lemma}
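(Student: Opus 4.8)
The plan is to show that the curve ratio $\dfrac{-\langle r^k, d_{S_k}^k\rangle}{\|r^k\|\,\|d_{S_k}^k\|}$ is bounded below by a positive constant for all $k\in\mathcal K_2$, which is exactly the content of \cref{ass2}. First I would observe that for $k\in\mathcal K_2$ one has $r^k\ne 0$: otherwise \cref{lemma-bound}~(ii) forces $u^k=\overline u^k$, and since \cref{if-else} holds for $k\in\mathcal K_2$ we have $S_k=\overline S_k$, so $x^k=\overline x^k$, contradicting the standing assumption that $x^k\ne\overline x^k$ for all $k$. Hence $d_{S_k}^k=-(G^k)^{-1}r^k\ne 0$ and the ratio is well defined.

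Using $G^k\succ 0$ and $(G^k)^{-1}\succeq\|G^k\|^{-1}I$, I would estimate
\[
-\langle r^k, d_{S_k}^k\rangle=\langle r^k,(G^k)^{-1}r^k\rangle\ge \frac{\|r^k\|^{2}}{\|G^k\|},
\]
so that
\[
\frac{-\langle r^k, d_{S_k}^k\rangle}{\|r^k\|\,\|d_{S_k}^k\|}\ \ge\ \frac{\|r^k\|}{\|G^k\|\,\|d_{S_k}^k\|}.
\]
Next I would convert the hypothesis \eqref{dboundxkxbar} into an upper bound of $\|d_{S_k}^k\|$ by $\|r^k\|$. For $k\in\mathcal K_2$, \cref{if-else} gives $S_k=\overline S_k$, hence $\|x^k-\overline x^k\|=\|u^k-\overline u^k\|$; combining \eqref{dboundxkxbar} with \cref{lemma-bound}~(ii) then yields
\[
\|d_{S_k}^k\|=\|d^k\|\le \widehat c\,\|x^k-\overline x^k\|=\widehat c\,\|u^k-\overline u^k\|\le \frac{4\widehat c}{\varpi}\,\|r^k\|,
\]
and substituting this into the previous display produces $\dfrac{-\langle r^k, d_{S_k}^k\rangle}{\|r^k\|\,\|d_{S_k}^k\|}\ge \dfrac{\varpi}{4\widehat c\,\|G^k\|}$.

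It then remains to bound $\|G^k\|$ from above uniformly over $k\in\mathcal K_2$. By \cref{well-defineness}~(i) and \cref{prop-xbark}~(i), for $k\in\mathcal K_2$ we have $x^k\in\mathcal L_F(x^0)$ and $|x^k|_{\min}>\nu$, so \cref{property-FS}~(ii) with $C=\mathcal L_F(x^0)$ and $\kappa=\nu$ gives a constant $\widehat c_2>0$ with $\|H^k\|\le\widehat c_2$; since $\zeta_k=[-\lambda_{\min}(H^k)]_+\le\|H^k\|\le\widehat c_2$ and $\|r^k\|\le r_{\!\rm max}$ by \cref{lemma-bound}~(i), the bound \eqref{Gk-snorm} gives $\|G^k\|\le(1\!+\!b_1)\widehat c_2+b_2 r_{\!\rm max}^{\sigma}=:c_G$. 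Therefore $\dfrac{-\langle r^k, d_{S_k}^k\rangle}{\|r^k\|\,\|d_{S_k}^k\|}\ge \dfrac{\varpi}{4\widehat c\,c_G}>0$ for every $k\in\mathcal K_2$, whence $\displaystyle\liminf_{\mathcal K_2\ni k\to\infty}\frac{-\langle r^k, d_{S_k}^k\rangle}{\|r^k\|\,\|d_{S_k}^k\|}\ge \frac{\varpi}{4\widehat c\,c_G}>0$, i.e.\ \cref{ass2} holds.

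The argument is mostly bookkeeping of earlier estimates; the only substantive point is recognizing \emph{why} \eqref{dboundxkxbar} is needed. The regularizer $b_2\|r^k\|^{\sigma}I$ alone only yields $\|d_{S_k}^k\|\le b_2^{-1}\|r^k\|^{1-\sigma}$, which for $\sigma\in(0,\tfrac12]$ makes $\|d_{S_k}^k\|/\|r^k\|$ blow up as $\|r^k\|\to 0$ (and indeed $\|r^k\|\to 0$ by \cref{lemma-gap}), so it is useless for controlling the curve ratio; it is precisely the \emph{linear} control $\|d^k\|\le\widehat c\,\|x^k-\overline x^k\|$, passed through the lower estimate $\|r^k\|\ge\tfrac{\varpi}{4}\|u^k-\overline u^k\|$ of \cref{lemma-bound}~(ii), that keeps $\|d_{S_k}^k\|/\|r^k\|$ bounded and closes the proof. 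I do not expect any further obstacle.
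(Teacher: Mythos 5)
Your proof is correct, and it takes a genuinely different and more direct route than the paper's. Where the paper passes to a subsequence converging to a cluster point, invokes \cref{supp-lemma} (and hence \cref{ass1}) to stabilize the support, takes the limit matrix $G^*=\lim_j G^{k_j}$, and then splits into two cases according to whether $G^*$ is singular --- the singular case being handled through the eigendecomposition of $G^{k_j}$ and a careful analysis of the coefficients $\gamma_i^{k_j}$ --- you instead obtain a \emph{uniform} lower bound valid for every $k\in\mathcal{K}_2$: the elementary estimate $-\langle r^k,d^k_{S_k}\rangle=\langle r^k,(G^k)^{-1}r^k\rangle\ge\|r^k\|^2/\|G^k\|$, the bound $\|d^k_{S_k}\|\le(4\widehat c/\varpi)\|r^k\|$ obtained by combining \eqref{dboundxkxbar} with $S_k=\overline S_k$ (from \eqref{if-else}) and \cref{lemma-bound}~(ii) --- exactly the combination the paper itself uses in \eqref{eq3} --- and the uniform bound \eqref{Gk-snorm} on $\|G^k\|$ coming from \cref{property-FS}~(ii), \cref{prop-xbark}~(i) and \cref{lemma-bound}~(i). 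This gives the ratio $\ge\varpi/\bigl(4\widehat c\,[(1\!+\!b_1)\widehat c_2+b_2 r_{\rm max}^{\sigma}]\bigr)>0$ on all of $\mathcal{K}_2$, which is stronger than the liminf statement of \cref{ass2}; in effect your inequality $\sum_i(\gamma_i^{k})^2/\lambda_i^{k}\ge 1/\lambda_1^{k}=1/\|G^k\|$ short-circuits the paper's case analysis. Two further remarks in your favor: your argument nowhere needs \cref{ass1} (the paper's proof uses it only via the support identification along subsequences), and your opening observation that $r^k\ne0$ for $k\in\mathcal{K}_2$ (via \cref{lemma-bound}~(ii) and the standing assumption $x^k\ne\overline x^k$) settles the well-definedness of the ratio, a point the paper leaves implicit. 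The trade-off is only expository: the paper's subsequence formulation makes visible how close $G^{k_j}$ can come to singularity and why the direction bound \eqref{dboundxkxbar} forces the gradient to live mostly in the well-conditioned eigenspace, whereas your proof packages all of this into the single bound $\|d^k_{S_k}\|/\|r^k\|\le 4\widehat c/\varpi$ --- which, as your closing comment correctly notes, is precisely the leverage that the regularization term $b_2\|r^k\|^\sigma I$ alone cannot supply.
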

	\begin{proof}
		%Suppose that \cref{ass1} and condition \eqref{dboundxkxbar} holds.
		By \cref{well-defineness}, $\{x^k\}_{k\in\mathbb{N}}$ is bounded. Let $x^*$ be an arbitrary accumulation point of $\{x^k\}_{k\in\mathbb{N}}$.
		Then, there exists a subsequence $\{x^{k_j}\}_{j\in\mathbb{N}}$  with $k_j \in \mathcal{K}_2$
		such that $\lim_{j\rightarrow \infty} x^{k_j}=x^*$. By \cref{supp-lemma},
		for all sufficiently large $j\in\mathbb{N}$,
		${\rm supp}(x^{k_j})={\rm supp}(x^*)=S_{*}$. Write $s=|S_{*}|$.
		From the continuity, the sequence $\{G^{k_j}\}_{j\in\mathbb{N}}$ is convergent
		and let $G^*=\lim_{j\rightarrow\infty} G^{k_j}$. Clearly, $G^*$ is an $s\times s$ positive semidefinite matrix. Let $\lambda_1\ge\lambda_2\ge\cdots\ge\lambda_s\ge 0$
		be the eigenvalues of $G^*$. For each $j\in\mathbb{N}$, let $\lambda_1^{k_j}\ge\lambda_2^{k_j}\ge\cdots\ge\lambda_{s}^{k_j}>0$
		be the eigenvalues of the $s\times s$ positive definite matrix $G^{k_j}$. Then,
		for each $i\in [s]$, $\lim_{j\rightarrow\infty} \lambda^{k_j}_i =\lambda_{i}$.
		
		\noindent
		{\bf Case 1: $\lambda_s>0$}. Now the matrix $G^*$ is positive definite. Also,
		for all sufficiently large $j\in\mathbb{N}$, 
		$\lambda_s^{k_j}>\frac{\lambda_s}{2}$ and $0<\lambda_1^{k_j}\le\frac{3\lambda_1}{2}$.
		Consequently, for all sufficiently large $j\in\mathbb{N}$,
		\begin{equation}\label{suff-cond}
			\frac{-\langle r^{k_j}, d^{k_j}_{S_{k_j}}\rangle }{\| r^{k_j}\| \|d^{k_j}_{S_{k_j}}\|}
			= \frac{\langle G^{k_j} d_{S_{k_j}}^{k_j}, d^{k_j}_{S_{k_j}}\rangle }{\| G^{k_j} d_{S_{k_j}}^{k_j}\| \|d^{k_j}_{S_{k_j}}\|}
			\ge \frac{ \lambda^{k_j}_s \|d^{k_j}_{S_{k_j}}\|^2 }{\lambda_1^{k_j}\|d_{S_{k_j}}^{k_j}\|^2 }
			\ge \frac{\lambda_s}{3\lambda_1} > 0.
		\end{equation}
		
		\noindent
		{\bf Case 2: $\lambda_s=0$}. Now there exists $t\in [s]$ such that $\lambda_i=0$
		for $i=t,\ldots,s$ and $\lambda_{i}>0$ for $i=1,\ldots,t-1$. Fix any
		$0<\varepsilon<\min\left\{\frac{\varpi}{8\widehat{c}},\frac{\varpi}{4\widehat{c}\sqrt{s-t+1}}\right\}$.
		From $\lim_{j\rightarrow\infty} \lambda^{k_j}_i =\lambda_{i}$ for each $i\in[s]$ and $G^{k_j} \succ 0$ for each $j\in\mathbb{N}$,
		for all sufficiently large $j\in\mathbb{N}$,
		\begin{equation}\label{j-con}
			0<\lambda_i^{k_j}<\varepsilon\ \ {\rm for}\ i=t,\ldots,s
			\ \ {\rm and}\ \ \frac{1}{2}\lambda_{i}<\lambda_i^{k_j} <\frac{3}{2}\lambda_i\ \ {\rm for} \ i=1,\ldots,t-1.
		\end{equation}
		We claim that $t>1$. If not, $t=1$, by \cref{lemma-bound} (ii),
		$\|d^{k_j}\| = \|(G^{k_j})^{-1} r^{k_j}\| \geq \frac{\|r^{k_j}\|}{\lambda^{k_j}_1}
		\geq \frac{\varpi}{4\lambda_{1}^{k_j}}\|x^{k_j} -\overline{x}^{k_j}\|
		\ge \frac{\varpi}{4\varepsilon}\|x^{k_j} -\overline{x}^{k_j}\|$,
		which along with $\varepsilon\le\frac{\varpi}{8\widehat{c}}$ implies that
		$\|d^{k_j}\|>2\widehat{c}\|x^{k_j} -\overline{x}^{k_j}\|$,
		a contradiction to condition \eqref{dboundxkxbar}.
		Now let $G^{k_j}$ have the eigenvalue decomposition
		given by $G^{k_j}=(V^{k_j})^{\mathbb{T}}{\rm diag}(\lambda_1^{k_j},\ldots,\lambda_s^{k_j})V^{k_j}$,
		where $V^{k_j}$ is an $s\times s$ orthogonal matrix. For each $j\in\mathbb{N}$,
		since the column vectors $v_1^{k_j},\ldots,v_s^{k_j}$ of the matrix $V^{k_j}$
		are linearly independent, there exist $\gamma_1^{k_j},\ldots,\gamma_s^{k_j}\in\mathbb{R}$
		such that
		\begin{equation}\label{eq1}
			\frac{r^{k_j}}{\|r^{k_j}\|}=\sum_{i=1}^s \gamma_i^{k_j} v^{k_j}_i\ \ {\rm with}\ \ \sum_{i=1}^s (\gamma_i^{k_j})^2 = 1.
		\end{equation}
		Together with the definition of $d^{k_j}_{S_{k_j}}$, it follows that
		\begin{equation}\label{eq2}
			\frac{d^{k_j}_{S_{k_j}}}{\|r^{k_j}\|}
			=\frac{-(G^{k_j})^{-1}r^{k_j}}{\|r^{k_j}\|}
			=-\sum_{i=1}^s \frac{\gamma_i^{k_j}}{\lambda_i^{k_j}}v_i^{k_j}.
		\end{equation}
		By combining condition \eqref{dboundxkxbar} and \cref{lemma-bound} (ii),
		for each $j\in\mathbb{N}$, we have
		\begin{equation}\label{eq3}
			\|d^{k_j}_{S_{k_j}}\|=\|d^{k_j}\|\le(4{\widehat{c}}/{\varpi})\|r^{k_j}\|,
		\end{equation}
		which by \eqref{eq2} means that
		$\sum_{i=1}^s\big({\gamma_i^{k_j}}/{\lambda_i^{k_j}}\big)^2 \leq\frac{16\widehat{c}^2}{\varpi^2}$. This by \eqref{j-con} implies that
		for all sufficiently large $j\in\mathbb{N}$,  $\gamma_i^{k_j}\le\frac{4\varepsilon\widehat{c}}{\varpi}$
		with $i\in\{t,\ldots,s\}$. Together with $\sum_{i=1}^s (\gamma_i^{k_j})^2 = 1$,
		we obtain that $\sum_{i=1}^{t-1}(\gamma_i^{k_j})^2\ge 1-\frac{16(s-t+1)\varepsilon^2\widehat{c}^2}{\varpi^2}$
		and then for all sufficiently large $j\in\mathbb{N}$, there exists $l_j\in\{1,\ldots,t\!-\!1\}$ such that
		$(\gamma_{l_j}^{k_j})^2\geq \frac{\varpi^2-16(s-t+1)\varepsilon^2\widehat{c}^2}{\varpi^2(t-1)}$.
		Thus, for all sufficiently large $j\in\mathbb{N}$, 
		it follows from \cref{eq1}-\eqref{eq3} that
		\[
		\frac{-\langle r^{k_j}, d^{k_j}_{S_{k_j}}\rangle}{\|r^{k_j}\| \|d^{k_j}_{S_{k_j}}\|}
		\ge\frac{\varpi}{4\widehat{c}}\sum_{i=1}^s \frac{(\gamma_i^{k_j})^2}{\lambda_{i}^{k_j}}
		\ge\frac{\varpi(\gamma_{l_j}^{k_j})^2}{4\widehat{c}\lambda_{l_j}^{k_j}}
		\ge\frac{\varpi^2-16(s-t+1)\varepsilon^2\widehat{c}^2}{6\widehat{c}\lambda_{l_j}\varpi(t-1)}>0,
		\]
		where the third inequality is also using $\lambda_l^{k_j}\leq \frac{3}{2}\lambda_l$ by \eqref{j-con},
		and the last one is by $0<\varepsilon<\frac{\varpi}{4\widehat{c}\sqrt{s-t+1}}$.
		From the last inequality and \eqref{suff-cond}, we obtain the conclusion.
	\end{proof}
	
	The example below shows that the inverse of \cref{lemma-cond} does not hold.
	% and therefore {\color{blue} we conclude that, under \cref{ass1} for those $d^k$'s chosen by regularized Newton method, \cref{ass2} is strictly weaker than condition \cref{dboundxkxbar}.}
	
	\begin{example}\label{ex1}
		Consider the problem $\min_{t\in\mathbb{R}} f(t) + |t|^{\frac{1}{2}}$
		with $f$ defined as follows:
		\[
		f(t)\!:=\left\{\begin{array}{cl}
			\frac{49}{8}t^2 - \frac{67}{4}t + \frac{85}{8} &{\rm if}\ t \in (-\infty, 1),\\
			(t-2)^4 - t^{\frac{1}{2}} & {\rm if}\ t\in [1,4),\\
			\frac{1537}{64}t^2-\frac{5132}{32}t + \frac{1085}{4} &{\rm if}\ t \in [4, \infty).
		\end{array}\right.
		\]
		We use \cref{hybrid} with $\widetilde{\tau}=2$, $\widetilde{\alpha} = 1, \mu_{\min}=40,\widetilde{L}=49$ and $\sigma=\frac{1}{3}, b_2=1,\varrho=10^{-4},\beta=\frac{1}{2},t^0=2.1$
		to seek a critical point of this problem.
		From the iterates of \cref{hybrid},
		the generated sequence $\{t^k\}$ satisfies $\lim_{t\rightarrow \infty} t^k = 2$.
		When $t^k$ is sufficiently close to $2$, all the iterates are from regularized Newton step and
		$|d^k| =\left|\frac{4(t^k - 2)^3}{12(t^k-2)^2 + 4^{\frac{1}{3}} (t^k -2)}\right|= O(|t^k-2|^2)$,
		while by \cref{lemma-bound} (ii) and \cref{lemma-gap} we have
		$|t_k - \overline{t}_k| = O(|f'(t_k)|) = O(|t_k - 2|^3)$.
		Then, $|t^k - \overline{t}^k| = o(|d^k|)$ and the condition in \eqref{dboundxkxbar}
		does not hold for all sufficiently large $k$. However, \cref{ass2} always holds
		because $-\frac{d^k f'(t^k)}{|d^k| |f'(t^k)|} = 1$ for all $k$.
	\end{example}
	%-----------------------------------------------------------------------------
	\section{Numerical experiments}\label{sec5}
	
	In this section we apply HpgSRN to solving the $\ell_q$-norm regularized linear
	and logistic regression problems on real data, which respectively take the form of \eqref{model} with $f=f_1$ or $f_2$, where $f_1(z)\!:=\frac{1}{2}\|z-b\|^2$ and
	$f_2(z)\!:=\sum_{i=1}^m\log\big(1\!+\!\exp(-b_iz_i)\big)$ for $z\in\mathbb{R}^m$.
	Here, $b\in\mathbb{R}^m$ is a given vector. Clearly, such $f$ satisfies \cref{ass1}
	and the associated $F$ is a KL function. All numerical tests
	are conducted on a desktop running in MATLAB R2020b and 64-bit Windows
	System with an Intel(R) Core(TM) i7-10700 CPU 2.90GHz and 32.0 GB RAM.  The MATLAB code is available at \url{https://github.com/YuqiaWU/HpgSRN}.
	%-----------------------------------------------------------------------------
	\subsection{Implementation of HpgSRN}\label{sec5.1}
	
	In \cref{hybrid}, we set $\mu_0 = 1$ and when $k\geq 1$, $\mu_k$ is chosen
	by the Barzilai-Borwein (BB) rule \cite{BBstep}, that is,
	\[
	\mu_k=\max\Big\{\mu_{\min}, \min\Big\{\mu_{\max},\frac{\langle x^k\!-\!x^{k-1},
		\nabla\psi(x^k)\!-\!\nabla\psi(x^{k-1})\rangle}{\|x^k\!-\!x^{k-1}\|^2}\Big\}\Big\}
	\]
	with $\mu_{\min}\!=\!10^{-20}$ and $\mu_{\max}\!=\!10^{20}$.
	For each $k\in\mathcal{K}_2$, we call the MATLAB function \textsf{eigs} to compute the approximate smallest eigenvalue of $\nabla^2 F_{S_k}(u^k)$, which requires about $O(|S_k|^2)$ flops by \cite{stewart02}.
	Since $|S_k|$ is usually much smaller than $n$, this computation cost is not expensive.
	In addition, we choose
	\[
	\widetilde{\tau}=10,\,\widetilde{\alpha}=10^{-8},\,
	\sigma = 0.5,\, b_1 = 1+10^{-8},\, b_2 = 10^{-3}, \varrho = 10^{-4}, \beta = 2.
	\]
	During the testing, we solve the linear system in (S4) via a direct method
	if $|S_k|<500$, otherwise a conjugate gradient method.   The direct method for computing the inverse of the  $G^k$ needs about $O(|S_k|^3)$ flops, so that HpgSRN is well adapted to high dimensional problems if $|S_k|$ is small.
	Our preliminary tests indicate that \eqref{model} with $q=1/2$ usually
	has better performance than \eqref{model} with other $q\in(0,1)$
	in terms of the CPU time and the sparsity. This coincides with the conclusion
	in \cite{Hu17,xu10}. Inspired by this, we choose $q=1/2$ for the subsequent
	numerical testing. The parameter $\lambda$ in \eqref{model}
	is specified in the corresponding experiments.
	
	We compare the performance of HpgSRN with that of ZeroFPR \cite{Themelis18}.
	The code package of ZeroFPR is downloaded from \url{http://github.com/kul-forbes/ForBES}.
	Consider that the iterate steps of  PG method with a monotone line search (PGls), a monotone version of SpaRSA \cite{Wright09}, are the same as those of Algorithm \ref{gist} with the above BB rule for updating $\mu_k$. We also compare the performance of HpgSRN with that of PGls to check the effect of the additional subspace regularized Newton step on HpgSRN. The parameters of  PGls
	are chosen to be the same as those involved in Step 1 of HpgSRN except $\widetilde{\tau}=2$.  For the three algorithms, we adopt the same
	stopping criterion  $\gamma\|x^k - {\rm prox}_{\gamma^{-1}(\lambda g)}(x^k - \gamma^{-1}\nabla \!\psi(x^k))\|_{\infty} < 10^{-3}$  or $k\ge 50000$, where $\gamma = L/0.95$ and $L$ is  an estimation of the Lipschitz constant of $\nabla\psi(\cdot)$. It is well known that the Lipschitz constants of $A^{\mathbb{T}}\nabla f_1(A\cdot)$ and $A^{\mathbb{T}}\nabla f_2(A\cdot)$ are $\|A\|^2$ and $0.25\|A\|^2$, respectively. We use the following MATLAB  code to estimate the spectral norm of $A$:
	
	\textsf{Amap = @(x) A*x; ATmap = @(x) A'*x; AATmap = @(x) Amap(ATmap(x));}
	
	\textsf{eigsopt.issym = 1; L = eigs(AATmap, m, 1, 'LA', eigsopt).}
	
	\noindent
	As in ZeroFPR, we choose $x^0=0$ as the starting point. Although $x^0 = 0$ is a local minimizer of $F$ and hence an $L$-type stationary point by \cite[Theorem 4.4]{themelis21}, it is not a good one in terms of objective value; see the difference between $F(0)$ and Fval, the objective value of the output, for each example in Tables \ref{tab1} and \ref{tab2}. It is worth noting that equipped with such an initial point, Algorithm \ref{hybrid} may stop in the first iteration and in this case, $x^0$ is regarded as an acceptable solution.
	
	%   {\color{blue} Note that $x^0=0$ is a local minimizer of $F$ and hence an $L$-type stationary point by \cite[Theorem 4.4]{themelis21}. Although HpgSRN aims to find an $L$-type stationary point, it usually does not terminate at $x^0$. In particular, by definition it holds that $x^0 \in {\rm prox}_{\mu^{-1}(\lambda g)}(x^0 - \mu^{-1}\nabla \psi(x^0))$ for some $\mu>0$, while $\overline{\mu}_0$ 
		%   hardly satisfies this inclusion. As a result, $\overline{x}^0$ is different from $x^0$ and hence the termination condition in (S2) is not met in one iteration. In fact, $x^0$ is in general not a good $L$-type stationary point in terms of objective value; see the difference between $F(0)$ and the optimal objective function value (Fval) for each example in Tables \ref{tab1} and \ref{tab2}. Nevertheless, if $x^0 = \overline{x}^0 = 0$, the zero vector is regarded as an acceptable output.  }
	%  In fact, the former is the default termination condition for ZeroFPR. Since $\overline{\mu}_k$ has a uniformly lower bound, this condition implies the one in \cref{hybrid} with $\varepsilon = \frac{10^{-3}}{\mu_{\min}}$.
	
	%  In addition, SpaRSA uses the same stopping condition as for HpgSRN.
	
	In the next two subsections, we will conduct the experiments on real data and report the numerical results including the number of iterations (Iter\#), the CPU times in seconds (Time), the objective function values (Fval) and the cardinality of the outputs (Nnz).
	In particular, to check the effect of the regularized Newton steps in HpgSRN,
	we record its number of iterations in the form $M(N)$,
	where $M$ means the total number of iterates and $N$ means the number of regularized Newton steps.
	%----------------------------------------------------------------------------------------
	\subsection{\texorpdfstring{$\ell_q$}--norm regularized linear regression}\label{sec5.3}
	
	We conduct the experiments for the $\ell_q$-norm regularized linear regressions
	with $(A,b)$ from LIBSVM datasets (see \url{https://www.csie.ntu.edu.tw}).
	As suggested in \cite{huang10}, for {\bf housing} and {\bf space$\_$ga},
	we expand their original features with polynomial basis functions.
	The second column of Table \ref{tab1} lists the values of $\|A\|^2$ and $F(0)$,
	which reflect the condition number of the Hessian matrix of
	the loss function $\psi$ and the quality of the starting point $x^0$  respectively.
	For each dataset, we solve \eqref{model} associated to $f_1$ and
	$\lambda=\lambda_c \|A^{\mathbb{T}}b\|_{\infty}$ for two different $\lambda_c$'s with the three solvers.
	
	From \cref{tab1}, we see that for all test examples HpgSRN spends much less time than ZeroFPR and PGls. For example, for {\bf log1p.E2006.train} with $\lambda_c = 10^{-5}$, ZeroFPR and PGls
	require more than one hour to yield an output,
	but HpgSRN returns an output within only $314s$.
	%  In addition, for all examples, the outputs of HpgSRN have comparable even
	%  better quality than those of ZeroFPR and SpaRSA do in terms of the objective value
	%  and the sparsity;
	In terms of the objective function value and sparsity, the outputs of HpgSRN are comparable with those of ZeroFPR and PGls,
	and even in some examples, these outputs of HpgSRN are better.
	For example, for {\bf housing7} with both $\lambda_c$'s the  objective function values of HpgSRN are better than those of ZeroFPR and PGls as well as the sparsity of HpgSRN is much less.
	
	\begin{table}[!ht]
		\centering
		\tiny
		\caption{Numerical comparisons on $\ell_q$-norm regularized linear regressions with LIBSVM datasets\label{tab1}}
		\begin{tabular}{lcccccc}
			\toprule
			\begin{tabular}[l]{@{}l@{}}Data\\ $(m,n)$ \end{tabular} & \begin{tabular}[l]{@{}l@{}}$\|A\|^2$ \\ $F(0)$ \end{tabular} & $\lambda_c$ & Index & HpgSRN & ZeroFPR & PGls   \\
			\midrule
			\multirow{8}{*}{\begin{tabular}[l]{@{}l@{}}space$\_$ga9\\ $(3107,5505)$ \end{tabular}} & \multirow{8}{*}{\begin{tabular}[l]{@{}l@{}}4.01e3\\ 5.77e3 \end{tabular}} & \multirow{4}{*}{$10^{-3}$}  & Iter\# & 17(5)  & 43  & 180       \\
			& & & Time & 0.45  & 0.98 &  0.93     \\
			&  & & Fval & 36.47 & 37.24  & 37.15  \\
			&  & & Nnz & 7  & 7 & 6 \\
			
			\cline{3-7}
			
			& & \multirow{4}{*}{$10^{-4}$}       & Iter\# & 230(64)  & 476  & 3058    \\
			&& & Time &  2.26 & 9.03 &  16.48     \\
			&& & Fval & 20.93  & 20.31  & 21.57  \\
			&& & Nnz & 15 & 19  & 15\\
			
			\midrule
			\multirow{8}{*}{\begin{tabular}[l]{@{}l@{}}housing7\\ $(506,77520)$\end{tabular}} &  \multirow{8}{*}{\begin{tabular}[l]{@{}l@{}}3.28e5\\ 1.50e5 \end{tabular}} &
			\multirow{4}{*}{$10^{-3}$}   & Iter\# &  639(157) & 4164  & 25133      \\
			& & & Time &  14.45 & 2.13e2  & 4.08e2     \\
			& &  & Fval & 2.25e3  & 2.57e3  & 2.56e3     \\
			&  & & Nnz & 27  & 49  & 57    \\
			
			\cline{3-7}
			
			& 	& \multirow{4}{*}{$10^{-4}$}      & Iter\# &  1765(485) & 18807 & 50000       \\
			& && Time & 49.26 & 9.81e2 & 8.59e2     \\
			&& & Fval & 8.89e2 & 9.27e2  & 9.17e2    \\
			&& & Nnz & 82  & 123  & 135   \\
			
			\midrule
			\multirow{8}{*}{\begin{tabular}[l]{@{}l@{}}E2006.test\\ $(3308,72812)$\end{tabular}} & \multirow{8}{*}{\begin{tabular}[l]{@{}l@{}}4.79e4\\ 2.46e4 \end{tabular}} & \multirow{4}{*}{$10^{-4}$}   & Iter\# & 3(0)  &  3 & 3      \\
			&&  & Time & 0.03  &  0.25 & 0.03    \\
			&&   & Fval & 2.45e2 & 2.45e2  & 2.45e2     \\
			&&   & Nnz & 1 & 1  & 1    \\
			
			\cline{3-7}
			
			&& \multirow{4}{*}{$10^{-5}$}      & Iter\# & 3(0) & 4  & 4      \\
			&& & Time & 0.05 & 0.25 &  0.04     \\
			&& & Fval & 2.40e2  & 2.40e2  & 2.40e2 \\
			&& & Nnz & 1  & 1  & 1  \\
			
			\midrule
			\multirow{8}{*}{\begin{tabular}[l]{@{}l@{}}E2006.train\\ $(16087,150348)$\end{tabular}}
			& \multirow{8}{*}{\begin{tabular}[l]{@{}l@{}} 1.91e5\\ 1.03e5 \end{tabular}}
			& \multirow{4}{*}{$10^{-4}$}      & Iter\# & 3(0)  & 3  & 3      \\
			&& & Time & 0.09 & 1.06 & 0.09       \\
			&& & Fval & 1.22e3  &  1.22e3 & 1.22e3   \\
			&& & Nnz & 1  &  1 & 1  \\
			
			\cline{3-7}
			
			&& \multirow{4}{*}{$10^{-5}$}      & Iter\# & 4(0) & 4  & 4   \\
			&& & Time & 0.11 & 1.05 & 0.11    \\
			&& & Fval & 1.20e3  & 1.20e3  & 1.20e3    \\
			&& & Nnz & 1  &  1 & 1  \\
			
			\midrule
			\multirow{8}{*}{\begin{tabular}[l]{@{}l@{}}log1p.E2006.test\\ $(3308,1771946)$\end{tabular}} & \multirow{8}{*}{\begin{tabular}[l]{@{}l@{}} 1.46e7\\2.46e4 \end{tabular}} & \multirow{4}{*}{$10^{-4}$}      & Iter\# &  372(88) & 827  &  1416   \\
			&	& & Time & 33.54 & 2.87e2 & 1.16e2     \\
			&	& & Fval & 2.35e2  & 2.43e2  & 2.37e2   \\
			&	& & Nnz &  5 & 4  & 6   \\
			
			\cline{3-7}
			
			&	& \multirow{4}{*}{$10^{-5}$}      & Iter\# &  755(166) & 6708  & 22305      \\
			&	& & Time & 1.01e2 & 2.28e3 & 2.30e3      \\
			&	& & Fval & 1.54e2  & 1.53e2  & 1.49e2  \\
			&	& & Nnz &  385 & 460 & 389    \\
			
			\midrule
			\multirow{8}{*}{\begin{tabular}[l]{@{}l@{}}log1p.E2006.train\\ $(16087,4265669)$\end{tabular}} & \multirow{8}{*}{\begin{tabular}[l]{@{}l@{}} 5.86e7\\ 1.03e5 \end{tabular}} & \multirow{4}{*}{$10^{-4}$}      & Iter\# & 286(58)  & 855  & 1621       \\
			&	& & Time & 77.95 & 8.57e2 &  3.85e2      \\
			&	& & Fval & 1.16e3  & 1.16e3  & 1.16e3    \\
			&	& & Nnz &  7 & 5  & 4  \\
			
			\cline{3-7}
			
			&	& \multirow{4}{*}{$10^{-5}$}      & Iter\# & 944(195) & 5610 & 33112   \\
			&	& & Time & 3.14e2 & 5.26e3 &  8.83e3    \\
			&	& & Fval &  1.02e3 & 1.02e3 & 1.01e3   \\
			&	& & Nnz &  141 & 184  & 155  \\
			\bottomrule
		\end{tabular}
	\end{table}
	%--------------------------------------------------------------------------------------------
	\subsection{\texorpdfstring{$\ell_q$}--norm regularized logistic regression}\label{sec5.4}
	
	We conduct the experiments for the $\ell_q$-norm regularized logistic regressions
	with $(A,b)$ from LIBSVM datasets. For each data, we solve \eqref{model}
	associated to $f_2$ and $\lambda=\lambda_c \max_{1\le j\le n}\|A_j\|_1$
	for two different $\lambda_c$'s with the three solvers.
	\cref{tab2} records their numerical results.
	We see that in terms of CPU time, HpgSRN is still the best one among the three solvers;
	in terms of the quality of the other outputs, HpgSRN has a comparable performance with ZeroFPR and PGls.
	%--------------------------------------------------------------------------
	
	\begin{table}[!ht]
		\centering
		\tiny
		\caption{Numerical comparisons on $\ell_q$-norm regularized logistic regressions with LIBSVM datasets\label{tab2}}
		\begin{tabular}{lcccccc}
			\toprule
			\begin{tabular}[l]{@{}l@{}}Data\\ $(m,n)$ \end{tabular} & \begin{tabular}[l]{@{}l@{}}$\|A\|^2$ \\ $F(0)$ \end{tabular} & $\lambda_c$& Index & HpgSRN & ZeroFPR &  PGls \\
			\midrule
			\multirow{8}{*}{\begin{tabular}[l]{@{}l@{}} colon-cancer\\ $(62,2000)$\end{tabular}} & \multirow{8}{*}{\begin{tabular}[l]{@{}l@{}}1.94e4\\ 42.98 \end{tabular}} & \multirow{4}{*}{$10^{-2}$}   & Iter\# &  48(6) & 730  & 94    \\
			&   &  & Time & 0.04 & 0.74  &  0.06     \\
			&  &   & Fval & 7.97 & 10.58 & 7.77  \\
			&  &   & Nnz & 10  & 9 & 9  \\
			
			\cline{3-7}
			
			&  & \multirow{4}{*}{$ 10^{-3}$}      & Iter\# & 94(9) & 1853  & 175     \\
			& & & Time &  0.07 & 2.07 &  0.11   \\
			& & & Fval & 1.03  & 1.07 & 1.07  \\
			& & & Nnz & 11 & 12  & 12 \\
			
			\midrule
			\multirow{8}{*}{\begin{tabular}[l]{@{}l@{}}rcv1\\ $(20242,47236)$\end{tabular}} &  \multirow{8}{*}{\begin{tabular}[l]{@{}l@{}}4.48e2\\ 1.40e4 \end{tabular}} &\multirow{4}{*}{$ 10^{-2}$}   & Iter\# & 65(10)  & 448  & 1193  \\
			&  &  & Time & 1.00  & 6.35  &  11.24    \\
			& &   & Fval & 4.23e3 & 4.35e3 & 4.24e3     \\
			& &   & Nnz & 165  & 167 & 164  \\
			
			\cline{3-7}
			
			&  & \multirow{4}{*}{$10^{-3}$}      & Iter\# & 365(96) & 2081 & 5536 \\
			&  & & Time & 7.78 & 29.27 & 88.65   \\
			&  & & Fval & 1.28e3 & 1.53e3  & 1.27e3   \\
			&  & & Nnz & 704 & 741  & 717 \\
			
			\midrule
			\multirow{8}{*}{\begin{tabular}[l]{@{}l@{}}news20 \\ $(19996,1355191)$\end{tabular}} & \multirow{8}{*}{\begin{tabular}[l]{@{}l@{}}1.73e3\\ 1.39e4 \end{tabular}} & \multirow{4}{*}{$10^{-2}$}   & Iter\# & 44(6)  & 170 & 981 \\
			&   &  & Time & 2.65  &  36.61 & 53.14  \\
			&  &   & Fval & 9.73e3 & 1.04e4 & 9.53e3  \\
			&  &   & Nnz & 51 & 42  & 50    \\
			\cline{3-7}
			
			&  & \multirow{4}{*}{$10^{-3}$}      & Iter\# &  410(99) & 1528 & 18538  \\
			& & & Time & 41.45 & 3.44e2 & 1.43e3     \\
			& & & Fval & 4.31e3 & 4.71e3   & 4.25e3   \\
			& & & Nnz & 385  & 371 & 401   \\
			
			\bottomrule
		\end{tabular}
	\end{table}
	
	\medskip
	To sum up, HpgSRN requires the least CPU time for all the test examples compared to ZeroFPR and PGls,
	and for those large scale examples, HpgSRN is at least ten times faster than ZeroFPR and PGls.
	The outputs of
	the objective function value and the sparsity yielded by HpgSRN have a comparable even better quality. This indicates that the introduction of second-order steps
	improves greatly the performance of the first-order method. We also observe that for most of examples, the iterates generated by the regularized Newton step account for about $10\%$--$35\%$ of the total iterates.
	%For those examples with a smaller $\lambda$, HpgSRN usually requires more time to return a desirable output. The reason is that the stationary points of \eqref{model} associated to smaller $\lambda$ have a large cardinality, and HpgSRN needs more time to compute the inverse of Hessian. In addition, when the cardinality is large, the support of the iterates will become unstable, and more second-order steps are needed to attain a neighborhood of a potential critical point.
	
	% ???how the values of $\|A\|^2$ contribute to the numerical results, any conclusion???
	%---------------------------------------------------------------------------------------------
	\section{Conclusion} \label{sec6}
	
	For the $\ell_q$-norm regularized composite problem \eqref{model}, we proposed
	a globally convergent regularized Newton method by exploiting the special structures
	of the $\ell_q$-norm. This is another attempt to combine a first-order method and a second-order method while maintaining the good properties of both methods.
	We not only established the convergence of the whole iterate sequence
	under a mild curve-ratio condition and the KL property of $F$,
	but also achieved a superlinear convergence rate under an additional local error bound
	condition. In particular, the local superlinear convergence result neither requires
	the isolatedness of the limit point nor its local minimum property.

	\section*{Acknowledgments} The authors are grateful to the associate editor and two reviewers for their valuable suggestions and remarks which allowed them to improve the original presentation of the paper.

	\bibliographystyle{siamplain}
	\bibliography{references}
\end{document}